\documentclass[12pt,final]{amsproc}



\usepackage[english]{babel}
\usepackage{latexsym}
\usepackage{amssymb}
\usepackage{amscd}
\usepackage[latin1]{inputenc}
\usepackage{mathrsfs}
\usepackage[notcite,notref]{showkeys}
\usepackage[all]{xy}

\setlength{\oddsidemargin}{-0.2cm}
\setlength{\evensidemargin}{-0.2cm} \setlength{\textwidth}{16.3cm}
\setlength{\textheight}{23.9cm}

\newtheorem{theorem}{Theorem}[section]
\newtheorem{proposition}[theorem]{Proposition}
\newtheorem{corollary}[theorem]{Corollary}
\newtheorem{lemma}[theorem]{Lemma}

\theoremstyle{definition}
\newtheorem{definition}{Definition}

\theoremstyle{remark}


\newcommand{\lra}{\longrightarrow}

\newcommand{\R}{\mathbb{R}}
\newcommand{\N}{\mathbb{N}}
\newcommand{\K}{\mathbb{K}}

\bibliographystyle{plain}


\newcommand{\aproof}{\begin{proof}}
\newcommand{\zproof}{\end{proof}}

\def\PB{\operatorname{PB}}

\def\co{\operatorname{co}}

\def\e{\varepsilon}

\def\N{\mathbb{N}}

\def\R{\mathbb R}

 \begin{document}

\title[The bounded approximation property and other stories]{On the bounded approximation property on subspaces of $\ell_p$  when $0<p<1$ and related issues}

\author[Cabello, Castillo and Moreno]{F\'elix Cabello S\'anchez, Jes\'us M. F. Castillo and Yolanda Moreno}

\address{Institute of Mathematics (Imuex), Universidad de Extremadura, Avenida de Elvas s/n, 06071 Badajoz, Spain}
             \email{fcabello@unex.es}
             \email{castillo@unex.es}



\address{Institute of Mathematics (Imuex), Escuela Polit\'ecnica, Universidad de Extremadura, Avenida de la Universidad s/n, 10071 C\'aceres, Spain}
             \email{ymoreno@unex.es}



\thanks{2010 {\it Mathematics Subject Classification}:  46B08, 46M07, 46B26.}
\thanks{Supported in part by DGICYT project MTM2016$\cdot$76958$\cdot$C2$\cdot$1$\cdot$P (Spain) and Junta de Extremadura project IB$\cdot$16056.}


\bigskip

\bigskip

\maketitle

\begin{abstract} This paper studies the bounded approximation property (BAP) in quasi Banach spaces. in the first part of the paper we show that the kernel of any surjective operator $\ell_p\lra X$ has the BAP when $X$ has it and $0<p\leq 1$, which is an analogue of the corresponding result of Lusky for Banach spaces. We then obtain and study nonlocally convex versions of the Kadec-Pe\l czy\'nski-Wojtaszczyk complementably universal spaces for Banach spaces with the BAP.
\end{abstract}

\section{Introduction}
While the Bounded Approximation Property (BAP) has received extensive attention in Banach spaces, things are different in quasi Banach spaces, and only a few papers touch the topic \cite{kaltuni, k-loc}. In this paper we study two different aspects of the BAP in quasi Banach spaces by means of a mixture of linear and nonlinear techniques. The first one is a study of the BAP on subspaces of $\ell_p$ when $0<p\leq 1$, which continues the circle of ideas discussed in \cite{L, CM-Isr, FJP, FJ} in the more arid ground of quasi Banach spaces. Our main result in this line is an analogue of Lusky's theorem \cite{L} holds: if $Q:\ell_p\lra X$ is a quotient map and $X$ has the BAP, then so $\ker Q$ has it. The case $p=1$ is, as we have said, due to Lusky, who proved that $\ker Q$ has a basis if $X$ has it. The proof in \cite{L} is based on the duality between the $\mathscr L_1$-spaces and  the $\mathscr L_\infty$-spaces, a path that cannot be followed when $p<1$ for obvious reasons. Our exposition not only shows that the result comfortably survives to the lack of local convexity, but also that the reason behind it is that $\ell_p$ is projective in the category of $p$-Banach spaces. The leading idea follows \cite{CM-Isr} in showing that the kernel of a quotient map $\ell_p\lra X$ is very similar to a certain ``nonlinear envelope'' of $X$, which inherits the BAP from $X$.\medskip

The second topic of the paper is the study of the ``largest'' separable $p$-Banach space having the BAP.
The existence of a ``complementably universal'' $p$-Banach space for the BAP is mentioned by Kalton in \cite{kaltuni} just adding that ``it is easy to duplicate the results for Banach spaces...'' It is clear from  \cite[Theorem 4.1(b) and Corollary 7.2]{kaltuni} that Kalton is referring to Pe\l czy\'nski papers \cite{pelcuni,pelcbap}. Alternative approaches could be to construct a $p$-convex version of Kadec' space in \cite{kade} or work out the $p$-convex version of Pe\l czy\'nski--Wojtaszczyk space in \cite{p-w}.\medskip

Here, by following ideas of Garbuli\'nska-W\c egrzyn, we show that these spaces arise quite naturally as ``approximate Fra\"\i ss\'e limits'' in a certain category whose ``morphisms'' are pairs of isometries and contractive projections acting between finite dimensional spaces. These techniques, promoted in the study of Banach spaces by Kubi\'s \cite{kubis, kukubis}, shed light on the isometric structure of Kalton and Kadec spaces. In particular we show that $\mathscr K_p$ can be characterized as the only $p$-Banach space with a monotone finite dimensional decomposition which has a certain extension property that we have called (we apologize in advance) ``almost universal complemented disposition''. In the end, it turns out that, within the ``category of pairs'', these spaces play the same role than Gurariy spaces in the ``isometric category'', though their structural properties are completely different.

\section*{Notations, conventions}
We consider real or complex linear spaces and we denote by $\K$ the scalar field.

\subsection*{Quasinorms and $p$-norms}
General references for quasi Banach spaces are \cite{KPR, kaltonHB}. Here we only recall some definitions, mainly to fix the notation.

A quasinorm on a linear space $X$ is a map $\|\cdot\|:X\lra \R$ with the following properties:
\begin{itemize}
\item $\|x\|=0$ if and only if $x=0$;
\item $\|\lambda x\|=|\lambda|\|x\|$ for every $\lambda\in\K$ and every $x\in X$.
\item $\|x+y\|\leq \Delta(\|x\|+\|y\|)$ for some constant $\Delta$ and every $x,y\in X$.
\end{itemize}
A quasinorm is called a $p$-norm (where $0<p\leq 1$) if, in addition, is $p$-subadditive:
\begin{itemize}
\item  $\|x+y\|^p\leq \|x\|^p+\|y\|^p$ for every $x,y\in X$.
\end{itemize}
A quasinorm gives rise to a topology on $X$, namely the coarsest linear topology for with the ball $B=\{x\in X:\|x\|\leq 1\}$ is a neighbourhood of the origin. The resulting linear topological space is called a quasinormed space and a quasi Banach space if it is complete. If the quasinorm happens to be a $p$-norm we speak of $p$-normed and $p$-Banach spaces. An important result due to Aoki and Rolewicz states that every quasinorm is equivalent to some $p$-norm, in the sense that they induce the same topology on the underlying space.

\subsection*{Operators} By an operator we mean a linear and continuous (equivalently, bounded) map acting between quasinormed spaces. If $T:X\lra Y$ is an operator, then
the quasinorm of $T$ is the number $\|T\|=\sup_{\|x\|\leq 1}\|Tx\|$. The space of all operators from $X$ to $Y$ is denoted by $L(X,Y)$. This should not be confused with the space of all linear maps, bounded or not, which is denoted by ${\sf L}(X,Y)$.

An operator $T$ is said to be contractive (or a contraction) if $\|T\|\leq 1$ and an isometry if it preserves the involved quasinorms: $\|Tx\|=\|x\|$. If
$$
(1+\e)^{-1}\|x\|\leq \|Tx\|\leq (1+\e)\|x\|
$$
for every $x$ in the domain of $T$, then $T$ is called an $\e$-isometry. Note that most authors use $1-\e$ instead of $
(1+\e)^{-1}$ for the lower bound.

Neither isometries nor $\e$-isometries are assumed to be surjective. However, we say that two spaces are isometric (or $\e$-isometric) if and only if there is a surjective isometry ($\e$-isometry) between them.

\subsection*{Bounded Approximation Property}

Since there are a number of equivalent definitions of the BAP let us record right now the version that we shall use along the paper.

\begin{definition}
A quasi Banach space $X$ has the $\lambda$-approximation property ($\lambda$-AP, for short) if for every finite-dimensional subspace $E$ and every $\e>0$ there is a finite-rank operator $T\in L(X)$ such that $Tx=x$ for all $x\in E$ and $\|T\|\leq \lambda+\e$. (Equivalently one can require that $\|x-Tx\|\leq\e\|x\|$ for all $x\in E$ and $\|T\|\leq\lambda$.)

A quasi Banach space has the BAP if it has the $\lambda$-AP for some $\lambda\geq1$.
\end{definition}

As the reader may guess, the lack of local convexity tends
to interfere with the BAP and so quasi Banach spaces with the BAP are more scarce than their Banach relatives. This must be understood in a ``probabilistic sense'' since each Banach space is also a quasi Banach space.

Nevertheless many natural examples of nonlocally convex quasi Banach spaces
have the BAP, among them the sequence spaces $\ell_p$, the Hardy classes $H^p$ and the Schatten classes $S_p$ for $0<p<1$.

Other natural spaces such as Lebesgue's $L_p$ and the $p$-Gurariy spaces $\mathscr G_p$  for $0<p<1$ lack it since they have trivial dual and no finite rank endomorphism, apart from zero.

\section{BAP on subspaces of $\ell_p$}


\subsection{Exact sequences of quasi Banach spaces}
An exact sequence of quasi Banach spaces is a diagram of quasi Banach spaces and (linear, continuous) operators
\begin{equation}\label{yzx}
\begin{CD}
0@>>> Y@>\imath >> Z@> \pi >> X@>>> 0
\end{CD}
\end{equation}
in which the kernel of each operator agrees with the image of the preceding one: in particular $\ker\pi=\imath[Y]$.

For our present purposes the most important examples of exact sequences are obtained considering a separable $p$-Banach space $X$, taking a quotient map $\pi:\ell_p\lra X$ and letting
$$
\begin{CD}
0@>>> Y@>\imath >> \ell_p@> \pi >> X@>>> 0,
\end{CD}
$$
where $Y=\ker\pi $ and $\imath$ is the inclusion map.

As oftens happens in Mathematics, having defined a class of objects, one has to decide when two objects are ``essentially the same''. The canonical definition for exact sequences reads as follows. Two exact sequences $0\lra Y\lra Z_i\lra X\lra 0$ are equivalent if there is an operator $u$ making commutative the diagram
$$
\xymatrixcolsep{3.5pc}\xymatrixrowsep{0.5pc}\xymatrix{
  & & Z_1 \ar[dd]^u \ar[dr] \\
 0 \ar[r] & Y \ar[ur]\ar[dr] & & X \ar[r] & 0\\
&  & Z_2 \ar[ur]
}
$$

\subsection{Exact sequences of $p$-Banach spaces and $p$-linear maps}
We now define quasilinear maps, which will play a fundamental role in our study of the BAP. A homogeneous map $\Phi:X\lra Y$ acting between quasi Banach spaces is said to be quasilinear if it obeys an estimate
$$
\|\Phi(x+x')-\Phi(x)-\Phi(x')\|\leq Q(\|x\|+\|x'\|)
$$
for some constant $Q$ an every $x,x'\in X$. These maps are the nonlinear side of short exact sequences. Indeed if one has  a quasilinear map  $\Phi:X\lra Y$ one can create an extension of $X$ by $Y$ as follows:

\begin{itemize}
\item Endow the product space $Y\times X$ with the quasinorm $\|(y,x)\|_\Phi=\|y-\Phi(x)\|+\|x\|$ and denote the resulting quasinormed space by $Y\oplus_\Phi X$.
\item Set up the exact sequence $$
\begin{CD}
0@>>> Y@>\imath >> Y\oplus_\Phi X @> \pi >> X@>>> 0,
\end{CD}
$$
with $\imath(y)=(y,0)$ and $\pi(y,x)=x$.

\item Note that $\imath$ is an isometry (it preserves the quasinorms) and that $\pi$ maps the unit ball of $Y\oplus_\Phi X$ onto that of $X$. In particular both $\imath$ and $\pi$ are relatively open operators and this implies that $Y\oplus_\Phi X$ is complete, that is, a quasi Banach space.
\end{itemize}

Thus, quasilinear maps give rise to extensions of quasi Banach spaces. When do two quasilinear maps $\Phi$ and $\Psi$ induce equivalent extensions? Precisely when the difference $\Phi- \Psi$ can be written as the sum of a bounded and a linear map (both from $X$ to $Y$).

Which extensions arise from quasilinear maps? All, up to equivalence.
We do not need these facts, that the reader can see in Kalton-Peck \cite{KP}.
This paper also contains one of the earlier and most spectacular applications of quasilinear maps to the study of exact sequences, namely that the classes of $p$-Banach spaces are not closed under extensions: for each $0<p\leq 1$ there is an exact sequence (\ref{yzx}) in which both $X$ and $Y$ are $p$-Banach spaces but $Z$ is not isomorphic to a $p$-Banach space.

Thus the following question arises. If $\Phi:X\lra Y$ is a quasilinear map acting between $p$-Banach spaces, when does the induced extension live in the category of $p$-Banach spaces? The answer is quite easy one one realizes two facts: first that a quasinorm may be equivalent to a $p$-norm without being itself a $p$-norm, and second, that a quasinorm on $Z$ is equivalent to a $p$-norm if and only if there is a constant $C$ such that
$$
\left\|\sum_i z_i\right\|\leq C\left(\sum_i \| z_i\|^p\right)^{1/p}
$$
for every $n\in\N$ and  every  $z_1,\ldots, z_n\in Z$.

\begin{lemma}
\label{convex}
Let $\Phi:X\lra Y$ be a quasilinear map acting
between $p$-normed spaces. Then $Y\oplus_\Phi Z$ is  isomorphic to a $p$-normed space if and only
there is a constant $K$ such that for every $n\in\N$ and  every  $x_1,\ldots, x_n\in X$ one has
 \begin{equation}\label{zestimate}
\left\|\Phi\left(\sum_{i=1}^nx_i\right)-\sum_{i=1}^n\Phi(x_i)\right\|
\leq K \; \left(\sum_{i=1}^n\|x_i\|^p\right)^{1/p}.\end{equation}
\end{lemma}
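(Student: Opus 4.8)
The plan is to read the statement off from the convexity criterion recalled immediately before the lemma, applied to the extension space $Y\oplus_\Phi X$. Recall that $Y\oplus_\Phi X$ is isomorphic to a $p$-normed space exactly when its quasinorm $\|\cdot\|_\Phi$ is equivalent to a $p$-norm (transport a $p$-norm along the isomorphism, and conversely), and by that criterion this happens if and only if there is a constant $C$ with
\[
\left\|\sum_{i=1}^n (y_i,x_i)\right\|_\Phi\leq C\left(\sum_{i=1}^n \|(y_i,x_i)\|_\Phi^p\right)^{1/p}
\]
for all finite families $(y_i,x_i)\in Y\oplus_\Phi X$. Thus the whole lemma reduces to showing that this convexity estimate is equivalent to (\ref{zestimate}), and throughout I will use that $\sum_i(y_i,x_i)=(\sum_i y_i,\sum_i x_i)$ and that $\|(y,x)\|_\Phi=\|y-\Phi(x)\|+\|x\|$.

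For the necessity direction I would assume the convexity estimate holds with some $C$ and feed in the special vectors $y_i=\Phi(x_i)$. For these one has $\|(y_i,x_i)\|_\Phi=\|\Phi(x_i)-\Phi(x_i)\|+\|x_i\|=\|x_i\|$, while
\[
\left\|\sum_i(\Phi(x_i),x_i)\right\|_\Phi=\left\|\sum_i\Phi(x_i)-\Phi\!\left(\sum_i x_i\right)\right\|+\left\|\sum_i x_i\right\|.
\]
Discarding the nonnegative second summand on the right and invoking the convexity estimate yields precisely (\ref{zestimate}) with $K=C$.

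For the sufficiency direction I would assume (\ref{zestimate}) with constant $K$ and estimate $\|\sum_i(y_i,x_i)\|_\Phi$ by hand. The key algebraic move is the splitting
\[
\sum_i y_i-\Phi\!\left(\sum_i x_i\right)=\sum_i\bigl(y_i-\Phi(x_i)\bigr)+\left(\sum_i\Phi(x_i)-\Phi\!\left(\sum_i x_i\right)\right).
\]
Since $Y$ is $p$-normed, the $p$-th power of the norm of the left-hand side is at most $\sum_i\|y_i-\Phi(x_i)\|^p$ plus the $p$-th power of the defect term, and the latter is bounded by $K^p\sum_i\|x_i\|^p$ thanks to (\ref{zestimate}). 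Using the trivial bounds $\|y_i-\Phi(x_i)\|^p\leq\|(y_i,x_i)\|_\Phi^p$ and $\|x_i\|^p\leq\|(y_i,x_i)\|_\Phi^p$ (each summand of $\|(y_i,x_i)\|_\Phi$ is dominated by the whole), together with $\|\sum_i x_i\|^p\leq\sum_i\|x_i\|^p$ from $p$-subadditivity in $X$, one bounds each of $\|\sum_i y_i-\Phi(\sum_i x_i)\|$ and $\|\sum_i x_i\|$ by a multiple of $\bigl(\sum_i\|(y_i,x_i)\|_\Phi^p\bigr)^{1/p}$; adding the two gives the convexity estimate with $C=1+(1+K^p)^{1/p}$.

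I do not expect a genuine obstacle: the argument is a direct application of the stated criterion. The only points demanding care are bookkeeping of the constants and, crucially, invoking the full $p$-subadditivity of the norm of $Y$ (rather than just the quasi-triangle inequality) in the sufficiency direction, which is what lets the defect term be isolated and then controlled by (\ref{zestimate}). The sole conceptual step is the harmless identification of being isomorphic to a $p$-normed space with the equivalence of $\|\cdot\|_\Phi$ to a $p$-norm.
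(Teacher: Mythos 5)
Your proposal is correct and follows essentially the same route as the paper: necessity by testing the $p$-convexity estimate on the vectors $(\Phi(x_i),x_i)$, whose $\|\cdot\|_\Phi$-norm is $\|x_i\|$, and sufficiency by splitting $\sum_i y_i-\Phi\left(\sum_i x_i\right)$ into $\sum_i\bigl(y_i-\Phi(x_i)\bigr)$ plus the defect term and invoking $p$-subadditivity in $Y$ together with (\ref{zestimate}). The only (harmless) difference is bookkeeping of the final constant: the paper compares $\sum_i\bigl(\|y_i-\Phi x_i\|^p+\|x_i\|^p\bigr)$ with $\sum_i\|(y_i,x_i)\|_\Phi^p$ via $s^p+t^p\leq 2^{1-p}(s+t)^p$, obtaining $C=(1+K^p)^{1/p}2^{1/p-1}$, whereas you dominate each summand of $\|(y_i,x_i)\|_\Phi$ by the whole, obtaining $C=1+(1+K^p)^{1/p}$.
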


\begin{proof}
If $\Phi$ satisfies (\ref{zestimate}), then, recalling that $s^p+t^p\leq 2^{1-p}(s+t)^p$ for $0<p\leq 1$,
$$
\begin{aligned}
\left\|\sum_i (y_i,x_i)\right\|_\Phi &= \left\|\sum_i
y_i-\Phi\left(\sum_ix_i\right)\right\|+\left\|\sum_ix_i\right\|\\
&\leq \left(\left\|\sum_i y_i-\sum_i\Phi x_i\right\|^p+
\left\|\Phi\left(\sum_ix_i\right)-\sum_i\Phi x_i\right\|^p
+\left\|\sum_ix_i\right\|^p\right)^{1/p}\\
&\leq \left(\sum_i \left(\| y_i-\Phi x_i\|^p+\|x_i\|^p\right)
+K^p \sum_i\|x_i\|^p\right)^{1/p}\\
&\leq (1+K^p)^{1/p}  \left(\sum_i \left(\| y_i-\Phi x_i\|^p+\|x_i\|^p\right)\right)^{1/p}\\
&\leq (1+K^p)^{1/p}  \left(\sum_i 2^{1-p}\big{(}\| y_i-\Phi x_i\|+\|x_i\|\big{)}^p\right)^{1/p}\\
&\leq (1+K^p)^{1/p} 2^{1/p-1} \left(\sum_i  \|(y_i,x_i)\|_\Phi^p\right)^{1/p}.
\end{aligned}
$$
Conversely, if $Y\oplus_\Phi X$ is locally
$p$-convex, then for finitely many $x_i\in X$ one has
$$
\left\|\sum_i(\Phi(x_i),x_i)\right\|_\Phi\leq M\left(\sum_i\|(\Phi(x_i), x_i)\|^p\right)^{1/p},
$$
hence
$$
\left\|\sum_i \Phi x_i-\Phi\left(\sum_i x_i\right)\right\|\leq M\left(\sum_i\|x_i\|^p\right)^{1/p}.\vspace{-15pt}
$$
\end{proof}

\begin{definition}
A homogeneous mapping  $\Phi: X \lra Y$ satisfying the estimate (\ref{zestimate}) is said to be $p$-linear. The least possible constant $K$ for which  (\ref{zestimate}) holds shall be referred to as the $p$-linearity constant of $\Phi$ and denoted by $Q^{(p)}(\Phi)$.
\end{definition}

This can be seen as a stronger form of
quasilinearity which involves an arbitrary number of variables
(instead of two).

Thus, to each couple of  quasi Banach spaces $X$ and $Y$ we will attach the space of all $p$-linear maps $\Phi:X\lra Y$ that we denote by
${\sf Q}^{(p)}(X, Y)$. We measure size in this space just using the $p$-linearity constant. It is clear that the asignment $\Phi\longmapsto Q^{(p)}(\Phi)$ has the properties of a $p$-norm except for the fact that $Q^{(p)}(\Phi)=0$ does not imply that $\Phi=0$: actually it is clear that $Q^{(p)}(\Phi)=0$ if and only if $\Phi$ is linear, not necessarily bounded.

There are two obvious ways to obtain a genuine $p$-normed space from ${\sf Q}^{(p)}(X, Y)$: the most natural one is to pass to the quotient over ${\sf L}(X,Y)$, the subspace of  linear maps, bounded or not. This has the disadvantage that the elements of the quotient are not longer ``mappings''. To avoid this (formal) problem let us fix once and for all a Hamel basis $H$ of $X$ and take $\tilde{\sf Q}^{(p)}(X, Y)$ as the subspace of
${\sf Q}^{(p)}(X, Y)$ of those $p$-linear mappings vanishing on $H$.

Observe that for every $p$-linear $\Phi: X\lra Y$ there is exactly one
linear map that agrees with $\Phi$ on $H$, namely
$$
L(x)=\sum_{h\in H}\lambda_h(x)\Phi(h),
$$
where $x=\sum_{h\in H}\lambda_h(x)h$ is the expansion of $x$ with respect to $H$.
The map $\tilde{\Phi}=\Phi-L$ belongs to $\tilde{\sf Q}^{(p)}(X, Y)$ and one has
$ {\sf Q}^{(p)}(X, Y)= \tilde{\sf Q}^{(p)}(X, Y)\oplus {\sf L}(X,Y)$.

\subsection{Linearization of $p$-linear maps}

Our immediate objective is to get a $p$-Banach space that linearizes $p$-linear maps defined on a given $p$-Banach space $X$. That is we need to attach to $X$ a certain space $\clubsuit$ so that each $p$-linear map $\Phi:X\lra Y$ corresponds to a bounded operator $T:\clubsuit\lra Y$ whichever it is the $p$-Banach space $Y$.

In \cite{CM-Isr}, where only Banach spaces and 1-linear maps are considered, this is achieved
by means of certain ``natural'' predual of the space of 1-linear forms $\tilde{\sf Q}^{(1)}(X, \K)$. This approach makes no sense in our current circunstances because there are lots of $p$-Banach spaces where the only bounded linear functional is the zero map.

So, the first thing we need is a $p$-Banach space $\mathscr U$ such that $L(Y,\mathscr U)\neq 0$ for all $p$-Banach spaces $Y$ with ``controlled cardinalities''.

Such an object can be obtained in various ways, but for our present purposes the most simple choice is that coming from the following remark.

\begin{lemma}
For every cardinal $\kappa$ there is a $p$-Banach space $\mathscr U_\kappa$ containing an isometric copy of every $p$-Banach space whose cardinality (not density character) is $\kappa$ or less.
\end{lemma}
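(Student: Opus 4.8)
The plan is to build $\mathscr U_\kappa$ as an $\ell_p$-sum of a set of representatives of all the spaces we must accommodate, so that the only genuine work is the set-theoretic bookkeeping needed to guarantee that such a set of representatives exists.

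First I would observe that, although the $p$-Banach spaces form a proper class, those of cardinality at most $\kappa$ comprise only a \emph{set} of isometry types. Indeed, fix a set $A$ with $|A|=\kappa$. Any $p$-Banach space $Z$ with $|Z|\le\kappa$ is, after choosing a bijection of its underlying set onto a subset of $A$, isometric to a structure carried by a subset of $A$, that is, to a triple consisting of a subset $S\subseteq A$, two operations (addition $S\times S\to S$ and scalar multiplication $\K\times S\to S$) and a function $\|\cdot\|\colon S\to\R$. The collection of all such triples is a subcollection of a product of power sets and function spaces over the fixed sets $A$, $\K$ and $\R$, hence is itself a set. Consequently we may pick a set $\{Z_i: i\in I\}$ of $p$-Banach spaces of cardinality $\le\kappa$ meeting every isometry class.

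Next I would form the $\ell_p$-sum
$$
\mathscr U_\kappa=\Big(\bigoplus_{i\in I}Z_i\Big)_{\ell_p}=\Big\{(z_i)_{i\in I}: z_i\in Z_i,\ \sum_{i\in I}\|z_i\|^p<\infty\Big\},\qquad \|(z_i)\|=\Big(\sum_{i\in I}\|z_i\|^p\Big)^{1/p},
$$
and check that it is a $p$-Banach space. The $p$-subadditivity is immediate from that of the summands, since $\sum_i\|z_i+w_i\|^p\le\sum_i\big(\|z_i\|^p+\|w_i\|^p\big)$, and completeness follows by the usual coordinatewise argument. For each fixed $j\in I$ the coordinate inclusion $Z_j\lra\mathscr U_\kappa$ sending $z$ to the family that equals $z$ in the $j$-th slot and $0$ elsewhere is an isometry, because its image has $\ell_p$-norm $(\|z\|^p)^{1/p}=\|z\|$. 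Since every $p$-Banach space of cardinality $\le\kappa$ is isometric to some $Z_j$, it embeds isometrically into $\mathscr U_\kappa$, which is exactly what is required.

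The main (and really the only) obstacle is the foundational step ensuring that the representatives form a set rather than a proper class; once this is granted the $\ell_p$-sum does all the work and the remaining verifications are routine. I would also remark that no attempt is made to control the cardinality or density character of $\mathscr U_\kappa$ itself—the statement asks only for a receptacle—so the crude $\ell_p$-sum over all isometry types is entirely adequate.
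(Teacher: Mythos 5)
Your proof is correct, and it differs from the paper's in the one step that matters: how to produce a \emph{set} of spaces whose $\ell_p$-sum will serve as $\mathscr U_\kappa$. You do this by set-theoretic coding: transporting any $p$-Banach space of cardinality $\le\kappa$ onto a subset of a fixed set $A$ of cardinality $\kappa$, so that the isometry types in question are represented among triples (subset of $A$, operations, norm function), which form a set; then you sum representatives. The paper instead takes $I$ to be the set of all closed subspaces $Y$ of $\ell_p^\kappa$ and sets $\mathscr U_\kappa=\ell_p(I,\ell_p^\kappa/Y)$; the fact doing the work there (left implicit) is that every $p$-Banach space $X$ with $|X|\le\kappa$ is \emph{isometric} to a quotient of $\ell_p^\kappa$: enumerate the unit ball of $X$ as $(x_i)_{i\in\kappa}$, and the induced operator $\ell_p^\kappa\lra X$, $(\lambda_i)\mapsto\sum_i\lambda_i x_i$, carries the open unit ball onto the open unit ball, so $X$ is isometric to $\ell_p^\kappa/\ker$. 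Thus the paper trades your foundational bookkeeping for the universality of $\ell_p^\kappa$ as a quotient source: its index set is a set simply by the power-set axiom, no coding discussion is needed, and the device fits the paper's recurring theme that $\ell_p$-spaces are projective generators among $p$-Banach spaces. Your argument is more elementary in that it uses no structure theory of $\ell_p^\kappa$ at all, and it generalizes verbatim to other categories of metric structures; both routes end with the same $\ell_p$-sum and coordinate isometries, so both are complete proofs.
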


\begin{proof}
Let $I$ be the set of all closed subspaces of $\ell_p^\kappa$ and for each $Y\in I$ consider the quotient $\ell_p^\kappa/Y$. Set $\mathscr U_\kappa=\ell_p(I, \ell_p^\kappa/Y)$.
\end{proof}

Another possibility is to take the so-called $p$-Gurariy space $\mathscr G_p$ from \cite{CGK} or a countable ultrapower of the space $\mathscr K_p$ that appears in Section~\ref{sec:aucd}. The only property of these spaces that we need is universality with respect to the class of separable $p$-Banach spaces.

So, let $\mathscr U$ be any $p$-Banach space containing an isometric copy of every separable $p$-Banach space. Let us replace the ground field by $\mathscr U$ and see what happens.

First, we consider the space of $p$-linear maps
${\sf Q}^{(p)}(X, \mathscr U)$ and the subspace $\tilde{\sf Q}^{(p)}(X, \mathscr U)$.

Then $\co^{(p)}(X)$ is the closed subspace of $L\left( \tilde{\sf Q}^{(p)}(X, \mathscr U), \mathscr U	\right)$ generated by the evaluation maps $\delta_x$, with $x\in X$.
Of course we must check that $\delta_x$ is bounded, but if $\Phi$ vanishes on $H$, then, writing $x=\sum_h\lambda_h(x)h$ we have
$$
\|\delta_x(\Phi)\|= \left\|\Phi(x)- \sum_h\lambda_h(x)\Phi(h)\right\|\leq Q^{(p)}(\Phi) \left(\sum_h|\lambda_h(x)|^p\|h\|^p\right)^{1/p}.
$$

The universal property behind this construction  appears now:

\begin{theorem}\label{th:univ}
The map $\mho: X\lra \co^{(p)}(X)$ given by $\mho(x)=\delta_x$ is $p$-linear, with $Q^{(p)}(\mho)\leq 1$. For every separable $p$-Banach space $Y$ and every $p$-linear map $\Phi:X\lra Y$ vanishing on $H$ there is a unique operator $T: \co^{(p)}(X)\lra Y$ such that $\Phi=T\circ\mho$ and, moreover, one has $\|T\|=Q^{(p)}(\Phi)$.
\end{theorem}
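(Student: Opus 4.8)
The plan is to exploit a simple duality phenomenon: evaluation at a fixed point $\Psi$ is a bounded operator on the operator space $L(\tilde{\sf Q}^{(p)}(X,\mathscr U),\mathscr U)$, of norm at most $\|\Psi\|=Q^{(p)}(\Psi)$, and the universality of $\mathscr U$ is precisely what lets us realise an arbitrary separable target $Y$ inside the ``predual'' $\tilde{\sf Q}^{(p)}(X,\mathscr U)$. I would begin by checking that $\mho$ is $p$-linear with $Q^{(p)}(\mho)\le 1$. Homogeneity is immediate, since $\delta_{\lambda x}(\Phi)=\Phi(\lambda x)=\lambda\,\delta_x(\Phi)$. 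For the estimate, note that for every $\Phi\in\tilde{\sf Q}^{(p)}(X,\mathscr U)$ one has $\big(\delta_{\sum_i x_i}-\sum_i\delta_{x_i}\big)(\Phi)=\Phi(\sum_i x_i)-\sum_i\Phi(x_i)$, so by the very definition of $Q^{(p)}(\Phi)$ and of the operator quasinorm,
$$
\left\|\mho\left(\sum_i x_i\right)-\sum_i\mho(x_i)\right\|=\sup_{Q^{(p)}(\Phi)\le 1}\left\|\Phi\left(\sum_i x_i\right)-\sum_i\Phi(x_i)\right\|\le\left(\sum_i\|x_i\|^p\right)^{1/p}.
$$

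Uniqueness of $T$ is then automatic: the relation $\Phi=T\circ\mho$ forces $T(\delta_x)=\Phi(x)$ for every $x\in X$, and since the $\delta_x$ have dense linear span in $\co^{(p)}(X)$ by construction, a continuous $T$ is completely determined.

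For existence, which is the heart of the matter, I would use universality to fix a (not necessarily surjective) isometry $j:Y\lra\mathscr U$, available because $Y$ is separable. Then $\Psi:=j\circ\Phi$ is a $p$-linear map $X\lra\mathscr U$ vanishing on $H$ (as $j$ is linear and $\Phi$ vanishes on $H$), with $Q^{(p)}(\Psi)=Q^{(p)}(\Phi)$ because $j$ preserves the quasinorm; hence $\Psi\in\tilde{\sf Q}^{(p)}(X,\mathscr U)$. The evaluation $E_\Psi:F\longmapsto F(\Psi)$ is a bounded operator $L(\tilde{\sf Q}^{(p)}(X,\mathscr U),\mathscr U)\lra\mathscr U$ with $\|E_\Psi\|\le Q^{(p)}(\Psi)=Q^{(p)}(\Phi)$. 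Restricting it to $\co^{(p)}(X)$ and observing that it sends each generator $\delta_x$ to $\delta_x(\Psi)=\Psi(x)=j(\Phi(x))\in j[Y]$, I conclude that the whole of $\co^{(p)}(X)$ lands in $j[Y]$, since $\co^{(p)}(X)$ is the closed span of the $\delta_x$ and $j[Y]$ is closed ($Y$ complete, $j$ isometric). Composing with the isometry $j^{-1}:j[Y]\lra Y$ yields $T:\co^{(p)}(X)\lra Y$ with $T(\delta_x)=\Phi(x)$, that is $\Phi=T\circ\mho$, and $\|T\|=\|E_\Psi|_{\co^{(p)}(X)}\|\le Q^{(p)}(\Phi)$.

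Finally, the norm identity $\|T\|=Q^{(p)}(\Phi)$ follows by adjoining the reverse inequality: from $\Phi=T\circ\mho$ and $Q^{(p)}(\mho)\le 1$ one gets $\|\Phi(\sum_i x_i)-\sum_i\Phi(x_i)\|\le\|T\|\,(\sum_i\|x_i\|^p)^{1/p}$ for every finite family, whence $Q^{(p)}(\Phi)\le\|T\|$. The only step where anything can go wrong is the containment $E_\Psi[\co^{(p)}(X)]\subseteq j[Y]$, and that is exactly where completeness of $Y$ together with the universality of $\mathscr U$ enter the argument; everything else is bookkeeping with the operator norm.
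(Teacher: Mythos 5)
Your proposal is correct and follows essentially the same route as the paper: the same computation gives $Q^{(p)}(\mho)\le 1$, and existence is obtained exactly as in the paper by composing $\Phi$ with an isometry $Y\lra\mathscr U$, restricting the evaluation map at the resulting element of $\tilde{\sf Q}^{(p)}(X,\mathscr U)$ to $\co^{(p)}(X)$, and using density of the span of the $\delta_x$ to see that the image lies in the (closed) isometric copy of $Y$. The only cosmetic difference is that the paper first treats the case $Y=\mathscr U$ and then reduces to it, whereas you do both steps at once; you also spell out the uniqueness and the closedness of $j[Y]$, which the paper leaves implicit.
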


\begin{proof}
The only reason for which the Theorem needs a proof is that the definition of
$\co^{(p)}(X)$ is a kind of tongue-twister.

That $\mho$ is homogeneous is trivial: take $\lambda\in \K, x\in X$ and $\Phi\in
 \tilde{\sf Q}^{(p)}(X, \mathscr U)$. Then
 $$
 \mho(\lambda x)(\Phi)=\delta_{\lambda x}\Phi= \Phi(\lambda x)=
 \lambda\Phi(x)=  (\lambda\mho x)(\Phi),
 $$
 so $ \mho(\lambda x)=  \lambda\mho(x)$.

Now, take finitely many $x_i\in X$ and set $x=\sum_i x_i$. One has
$$
\begin{aligned}
\left\| \mho(x)-\sum_i\mho(x_i)\right\|&= \sup_{Q^{(p)}(\Phi)\leq 1} \left\| \left(\mho(x)-\sum_i\mho(x_i)\right)(\Phi)\right\|\\
&=
\sup_{Q^{(p)}(\Phi)\leq 1} \left\| \Phi(x)-\sum_i\Phi(x_i)\right\|\leq \left(\sum_i\|x_i\|^p\right)^{1/p},
\end{aligned}
$$
 hence $Q^{(p)}(\mho)\leq 1$.

To prove the second statement let us first consider the case where $Y=\mathscr U$. So, let us assume $\Phi: X\lra \mathscr U$ is a (fixed) $p$-linear map vanishing on $H$. The required operator $T$ is just the restriction to $\co^{(p)}(X)$ of the ``evaluation at $\Phi$'':
$$
\delta_\Phi: L\left( \tilde{\sf Q}^{(p)}(X, \mathscr U), \mathscr U	\right) \lra \mathscr U,
$$
that, is, $\delta_\Phi(u)=u(\Phi)$. By the very definition of the operator $p$-norm one has $\|\delta_\Phi\|=\sup_{\|u\|\leq 1}\|u(\Phi)\|\leq Q^{(p)}(\Phi)$. Since $\co^{(p)}(X)$ is a subspace of $ L\left( \tilde{\sf Q}^{(p)}(X, \mathscr U), \mathscr U	\right)$ one also has $\|T\|\leq \|\delta_\Phi\|\leq Q^{(p)}(\Phi)$.

The identity $\Phi=T\circ\mho$ is trivial too: take $x\in X$; then
$$
T(\mho(x))=T(\delta_x)=\delta_\Phi(\delta_x)=\delta_x(\Phi)=\Phi(x).
$$
This also shows that $\|T\|$ and $Q^{(p)}(\Phi)$ actually agree since $\Phi=T\circ\mho$ implies that  $Q^{(p)}(\Phi)\leq \|T\|Q^{(p)}(\mho)\leq \|T\|$.

To complete the proof, suppose $\Phi: X\lra Y$ is $p$-linear and vanishes on $H$. Fixing an isometry $\kappa: Y\lra \mathscr U$ we can consider the composition $\kappa\circ \Phi:X\lra Y\lra\mathscr U$ to get an operator $T: \co^{(p)}(X)\lra \mathscr U$ such that $T\circ\mho=\kappa\circ\Phi$:
$$
\xymatrix{
X \ar[d]_\Phi \ar[rr]^\mho & &  \co^{(p)}(X) \ar[d]^T\\
Y\ar[rr]^\kappa & & \mathscr U
}
$$
But $T$, being bounded, takes values in $\kappa[Y]$, since $T(\delta_x)$ falls there for each $x\in X$ and the subspace  spanned by the evaluation functionals is dense in $\co^{(p)}(X)$. Now, the operator $\kappa^{-1}\circ T$ does the trick.
\end{proof}

And obvious consequence of the preceding Theorem is that there is a ``universal'' exact sequence
\begin{equation}\label{univ:seq}
\begin{CD}
0@>>> \co^{(p)}(X) @>\imath>> \co^{(p)}(X)\oplus_\mho X @>\pi>> X@>>> 0.
\end{CD}
\end{equation}
This sequence will play a role later.

\subsection{Tuning $p$-linear maps}

We now arrive at the key point of our argumentation. We want to prove that if $X$ has the BAP, then so  $\co^{(p)}(X)$ does, a fact involving (finite rank) operators on  $\co^{(p)}(X)$. But operators on  $\co^{(p)}(X)$ correspond to $p$-linear maps $X\lra  \co^{(p)}(X)$ so, at the end of the day, our statement refers to the behaviour of certain quasilinear maps defined on $X$.

\begin{lemma}\label{lem:bestsuited}
Let $\Phi:F\lra Y$ be a $p$-linear map, where $F$ and $Y$ are $p$-normed spaces, with $F$ finite-dimensional. Then, for each $\e>0$, there is a homogeneous mapping  $\Phi':F\lra Y$ such that:
\begin{itemize}
\item $\|\Phi-\Phi'\|\leq (1+\e)Q^{(p)}(\Phi)$,
\item $\Phi'[F]$ spans a finite dimensional subspace of $Y$,
\item $Q^{(p)}(\Phi')\leq  3^{1/p}(1+\e)Q^{(p)}(\Phi)$.
\end{itemize}
\end{lemma}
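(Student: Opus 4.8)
The plan is to obtain $\Phi'$ by \emph{freezing} the values of $\Phi$ on a fine finite net of the unit sphere $S_F=\{x\in F:\|x\|=1\}$, so that $\Phi'$ takes its values in the finite-dimensional span of those finitely many frozen vectors. Everything hinges on one preliminary remark: although a $p$-linear map need not be continuous, on a finite-dimensional domain it is automatically bounded on $S_F$. Indeed, fix a basis $e_1,\dots,e_d$ of $F$ and write $x=\sum_j\lambda_j(x)e_j$; homogeneity gives $\Phi(\lambda_j(x)e_j)=\lambda_j(x)\Phi(e_j)$, so applying the estimate (\ref{zestimate}) to the vectors $\lambda_j(x)e_j$ yields
$$
\Big\|\Phi(x)-\sum_j\lambda_j(x)\Phi(e_j)\Big\|\le Q^{(p)}(\Phi)\Big(\sum_j|\lambda_j(x)|^p\|e_j\|^p\Big)^{1/p}.
$$
Both the right-hand side and the linear map $L(x)=\sum_j\lambda_j(x)\Phi(e_j)$ are continuous on the compact sphere, so $M:=\sup_{x\in S_F}\|\Phi(x)\|<\infty$. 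I may also assume $Q:=Q^{(p)}(\Phi)>0$, as otherwise $\Phi$ is linear, $\Phi[F]$ is finite-dimensional, and $\Phi'=\Phi$ already works.

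Now fix $\delta>0$ (to be tuned) and a finite $\delta$-net $x_1,\dots,x_n\in S_F$, which exists by compactness. Pick a set $\Sigma\subseteq S_F$ of representatives for the orbits of the modulus-one scalars, assign to each $x\in\Sigma$ a net point $x_{j(x)}$ with $\|x-x_{j(x)}\|<\delta$, put $\Phi'(x)=\Phi(x_{j(x)})$ for $x\in\Sigma$, and extend $\Phi'$ to $F$ by homogeneity. This makes $\Phi'$ homogeneous by construction and forces $\Phi'[F]\subseteq\span\{\Phi(x_1),\dots,\Phi(x_n)\}$, which is the second bullet. Observe that I deliberately do \emph{not} glue the frozen values with a partition of unity: in the absence of local convexity, averaging finitely many vectors can inflate the quasinorm by a dimensional factor, whereas freezing a single value does not.

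For the distance bound, homogeneity of $\Phi-\Phi'$ reduces the supremum to $S_F$, and by the scalar invariance it suffices to estimate on $\Sigma$. For $x\in\Sigma$ write $x=x_{j(x)}+(x-x_{j(x)})$; the two-vector case of (\ref{zestimate}) together with $\|\Phi(x-x_{j(x)})\|\le M\|x-x_{j(x)}\|\le \delta M$ and the $p$-subadditivity of the quasinorm of $Y$ give $\|\Phi(x)-\Phi'(x)\|\le\big(Q^p(1+\delta^p)+\delta^pM^p\big)^{1/p}$, whose limit as $\delta\to0$ is $Q$. Since $Q>0$, for $\delta$ small enough this is at most $(1+\e)Q$, which is the first bullet.

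It remains to control $Q^{(p)}(\Phi')$. Write $R:=\Phi-\Phi'$, a homogeneous map with $\|R(x)\|\le\rho\|x\|$ for $\rho:=\|\Phi-\Phi'\|\le(1+\e)Q$. Bounding both $\|R(\sum_k z_k)\|$ and $\|\sum_k R(z_k)\|$ by $\rho(\sum_k\|z_k\|^p)^{1/p}$ and using $p$-subadditivity once more gives $Q^{(p)}(R)\le 2^{1/p}\rho$. Because $Q^{(p)}$ is itself $p$-subadditive, $Q^{(p)}(\Phi')\le\big(Q^{(p)}(\Phi)^p+Q^{(p)}(R)^p\big)^{1/p}\le(Q^p+2\rho^p)^{1/p}$, and the elementary inequality $1+2(1+\e)^p\le 3(1+\e)^p$ converts this into $Q^{(p)}(\Phi')\le 3^{1/p}(1+\e)Q$, the third bullet. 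The only genuinely delicate point is the bookkeeping that keeps $\Phi'$ honestly homogeneous over the scalar field; the analytic heart of the matter is that the irreducible error is the $p$-linearity defect $\approx Q$, while the contribution of $M$ is annihilated by refining the net — precisely what the boundedness of $\Phi$ on $S_F$ makes possible.
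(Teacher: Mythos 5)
Your proof is correct, and it shares the paper's overall skeleton --- a finite $\delta$-net on the sphere, a fundamental domain $\Sigma$ for the modulus-one scalars, homogeneous extension, and values of $\Phi'$ confined to the span of finitely many frozen vectors --- but the analytic core is genuinely different. The paper never freezes a single nearest value: it runs a greedy algorithm producing, for each $f\in\Sigma$, a decomposition $f=\sum_{i}c_if_i$ over the whole net with $\left(\sum_i|c_i|^p\right)^{1/p}<1+\e$, and sets $\Phi'(f)=\sum_ic_i\Phi(f_i)$. The point of that extra work is that the error $\|\Phi(f)-\Phi'(f)\|$ is then bounded by a single application of the multi-vector estimate (\ref{zestimate}), so it is automatically proportional to $Q^{(p)}(\Phi)$: no case distinction for $Q^{(p)}(\Phi)=0$ is needed, and the boundedness of $\Phi$ on the sphere never enters. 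Your nearest-point freeze is simpler, but you pay twice: you need the preliminary compactness argument that $M=\sup_{S_F}\|\Phi\|<\infty$ (your comparison with the linear map $L$ through a basis is the right way to get it, and is sound), and you need the reduction to $Q^{(p)}(\Phi)>0$, because your error bound $\left(Q^p(1+\delta^p)+\delta^pM^p\right)^{1/p}$ contains an additive $M$-term that is only negligible \emph{relative to} $Q$. You handle both points correctly, so the argument stands. Your treatment of the third bullet --- $Q^{(p)}(\Phi-\Phi')\le 2^{1/p}\|\Phi-\Phi'\|$ plus $p$-subadditivity of $Q^{(p)}$ --- is essentially the paper's three-term estimate repackaged, and arguably cleaner; both yield the same constant $3^{1/p}(1+\e)$. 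Finally, your remark about avoiding partitions of unity matches the paper's motivation exactly: the greedy decomposition is the paper's substitute for averaging, engineered so that the $\ell_p$-weight of the coefficients stays close to $1$; what it buys over your version is an error estimate controlled solely by the quasilinearity constant, with no auxiliary bound on $\Phi$ and no degenerate case to exclude.
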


\begin{proof}
The main step of the proof is to associate to every $f\in F$ a good ``$p$-convex'' decomposition in a judicious way.

Let $S$ be the unit sphere of $F$. As $S$ is compact, for fixed $\delta>0$ we can find $f_1,\dots, f_n\in S$ such that, for every $f\in S$ there is $f_i$ such that $\|f-f_i\|<\delta$.

Pick any $f\in S$ and take $1\leq i\leq n$ such that
$$
\|f-f_i\|=\min_{1\leq k\leq n} \|f-f_k\|<\delta
$$
(take the least index if the minimun is attained on two or more vectors).
Let us set $f_0=0$ and consider the new point
$g=(f-f_i)/\|f-f_i\|$. Now, take $0\leq j\leq n$ minimizing $\|g-f_j\|$, with the same tie-break rule as before. We have
$$
\left\|f-f_i-(\|f-f_i\|) f_j\right\|< \delta^2.
$$
If $f=f_i$, then we take $j=0$.
In any case, continuing in this way we can select sequences $i(k)$ and $\lambda_k$ such that:
\begin{itemize}
\item $0\leq i(k)\leq n$ and $0\leq\lambda_k<\delta^k$ for all $k=0,1,2,\dots$.
\item For every $m\in \N$ one has $\|f-\sum_{0\leq k\leq m}\lambda_kf_{i(k)}\|\leq \delta^m$.
\end{itemize}
Grouping the terms in the obvious way and taking into account that the $\ell_1$-norm is dominated by the $\ell_p$ quasinorm, we can write
$$
f=\sum_{i=1}^n c_if_i\quad\text{with}\quad \left(\sum_{i=1}^n |c_i|^p\right)^{1/p}\leq  \left(\sum_{k=0}^\infty  \lambda_k^p\right)^{1/p}\leq \left(\frac{1}{1-\delta^{p}}\right)^{1/p}<1+\e
$$
 for $\delta$ sufficiently small.

This ``greedy algorithm'' especifies a unique decomposition for each $f$ in the unit sphere of $F$. However it does not guarantee any kind of homogeneity in these decompositions.

To amend this, let $S_0$ be a maximal subset of the unit sphere of $F$ with the property that any two points of $S_0$ are linearly independent (of course, when the ground field is $\R$ this just means that $S_0$ does not contain ``antipodal'' points). Equivalently, $S_0$ is a subset of the sphere such that every nonzero $f\in F$ can be written in a unique way as $f=cx$, with $c\in\mathbb K$ and $x\in S_0$.

Now we define $\Phi':F\lra Y$ as follows: if $f\in S_0$, then we put
$$
\Phi'(f)= \sum_{i=1}^n c_i\Phi(f_i)
$$
where $f=\sum_{i=1}^n c_if_i$ is the decomposition provided by the algorithm. We extend the map to the whole of $F$ by homogeneity: that is, for arbitrary $f\in F$ we write $x=cf$, with $c\in \mathbb K$ and $f\in S_0$, in the only way that this can be done, and we set $\Phi'(f)=c\Phi'(f)$.

It is clear that the resulting map is homogeneous. Let us check that $\Phi'$ works properly. Let  $K$ denote the $p$-linearity constant of the starting map $\Phi$. For $f\in S_0$, one has
$$
\|\Phi(f)-\Phi'(f)\|= \left\|\Phi(f)-  \sum_{i=1}^n c_i\Phi(f_i) \right\|\leq Q^{(p)}  \left(\sum_{i=1}^n |c_i|^p\right)^{1/p}< (1+\e)K,
$$
and for arbitrary $f$ just use the homogeneity of both maps.

It is obvious that the range of $\Phi'$ is contained in $[\Phi(f_1),\dots,\Phi(f_n)]$, which is a finite dimensional subspace of $Y$. Finally, to estimate the $p$-linearity constant of  $\Phi'$ we have, for $x_1,\dots,x_k\in F$, letting $x=\sum_{1\leq i\leq k}x_i$,
$$\begin{aligned}
\left\|\Phi'(x)-\sum_{i=1}^k\Phi'(x_i)\right\|^p
&= \left\|\Phi'(x)-
\Phi(x)
+\Phi(x)- \sum_{i=1}^k\Phi x_i
+ \sum_{i=1}^k\Phi x_i
-
\sum_{i=1}^k\Phi' x_i \right\|^p\\
& \leq
\left( (1+\e)K\right)^{p}\|x\|^p+
K^{p} \sum_{i=1}^k\|x_i\|^p+
\left( (1+\e)K\right)^{p} \sum_{i=1}^k\|x_i\|^p\\
& \leq 3(1+\e)^p K^{p} \sum_{i=1}^k\|x_i\|^p,
\end{aligned}$$
hence $Q^{(p)}(\Phi')\leq 3^{1/p}(1+\e) Q^{(p)}(\Phi)$.
\end{proof}

\begin{lemma}\label{con x}
Let $\Phi:X\lra Y$ be a $p$-linear map, where $X$ and $Y$ are $p$-normed spaces. Let $F$ be a finite-dimensional subspace of $X$ and let $x_1,\dots,x_m$ be points in the unit sphere of $F$. Then, for each $\e>0$, there is a homogeneous mapping  $\Phi_F:X\lra Y$ such that:
\begin{itemize}
\item $\|\Phi-\Phi_F\|\leq (1+\e)Q^{(p)}(\Phi)$,
\item $\Phi_F[F]$ spans a finite dimensional subspace of $Y$,
\item $\Phi_F(x_i)=\Phi(x_i)$ for $1\leq i\leq m$.
\item $Q^{(p)}(\Phi_F)\leq  3^{1/p}(1+\e)Q^{(p)}(\Phi)$.
\end{itemize}
\end{lemma}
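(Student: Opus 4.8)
The plan is to deduce the statement from Lemma \ref{lem:bestsuited} applied to the restriction $\Phi|_F\colon F\lra Y$, the only genuinely new features being the interpolation condition $\Phi_F(x_i)=\Phi(x_i)$ and the passage from $F$ back to the whole space $X$. First I would run the construction of Lemma \ref{lem:bestsuited} for $\Phi|_F$ (which is again $p$-linear, with $Q^{(p)}(\Phi|_F)\leq Q^{(p)}(\Phi)$ since restricting to a subspace takes a supremum over fewer tuples), but making two harmless choices inside that proof. I enlarge the finite $\delta$-net $f_1,\dots,f_n$ of the unit sphere $S$ of $F$ so that it contains $x_1,\dots,x_m$, and I choose the set $S_0$ so that $x_1,\dots,x_m\in S_0$; among the $x_i$ I first discard those that are scalar multiples of an earlier one, which is legitimate because homogeneity then forces the interpolation condition for the discarded vectors automatically (if $x_i=\lambda x_j$ with $x_j$ kept, then necessarily $|\lambda|=1$ and $\Phi_F(x_i)=\lambda\Phi_F(x_j)=\Phi(x_i)$). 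With these choices the greedy algorithm applied to $f=x_i$ stops at the very first step, since the nearest net point to $x_i$ is $x_i$ itself and the residual $f-f_i$ vanishes; hence the decomposition of $x_i$ is the trivial one $x_i=x_i$, and the resulting map $\Phi'\colon F\lra Y$ satisfies $\Phi'(x_i)=\Phi(x_i)$. Lemma \ref{lem:bestsuited} also guarantees $\|\Phi|_F-\Phi'\|\leq(1+\e)Q^{(p)}(\Phi|_F)$ and that $\Phi'[F]$ spans a finite dimensional subspace of $Y$.

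Next I would extend $\Phi'$ to a homogeneous map on all of $X$ by gluing it to $\Phi$ off $F$: set
$$
\Phi_F(x)=\begin{cases}\Phi'(x) & \text{if }x\in F,\\ \Phi(x) & \text{if }x\notin F.\end{cases}
$$
Because $F$ is a linear subspace, multiplication by a nonzero scalar preserves both membership and non-membership in $F$, so $\Phi_F$ is homogeneous. The second and third bullets are then immediate: $\Phi_F[F]=\Phi'[F]$ is finite dimensional and $\Phi_F(x_i)=\Phi'(x_i)=\Phi(x_i)$. For the first bullet note that $\Phi-\Phi_F$ vanishes outside $F$ and equals $\Phi|_F-\Phi'$ on $F$, whence $\|\Phi-\Phi_F\|\leq(1+\e)Q^{(p)}(\Phi|_F)\leq(1+\e)Q^{(p)}(\Phi)$.

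The only point that needs a genuine estimate is the last bullet, and here I would avoid working with $\Phi_F$ directly. Writing $\Delta=\Phi_F-\Phi$, the map $\Delta$ is homogeneous, supported on $F$, and bounded with $\|\Delta\|\leq(1+\e)Q^{(p)}(\Phi)$ by the previous paragraph. For any $x_1,\dots,x_k\in X$ and $x=\sum_i x_i$, $p$-subadditivity gives
$$
\left\|\Phi_F(x)-\sum_i\Phi_F(x_i)\right\|^p\leq\left\|\Phi(x)-\sum_i\Phi(x_i)\right\|^p+\left\|\Delta(x)-\sum_i\Delta(x_i)\right\|^p,
$$
the first term being at most $Q^{(p)}(\Phi)^p\sum_i\|x_i\|^p$. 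For the second term, using $\|\Delta(x)\|\leq\|\Delta\|\,\|x\|$, the estimate $\|x\|^p\leq\sum_i\|x_i\|^p$ valid in the $p$-normed space $X$, and again $p$-subadditivity, one bounds it by $\|\Delta\|^p\|x\|^p+\sum_i\|\Delta(x_i)\|^p\leq2(1+\e)^pQ^{(p)}(\Phi)^p\sum_i\|x_i\|^p$. Adding up and using $1\leq(1+\e)^p$ yields $\|\Phi_F(x)-\sum_i\Phi_F(x_i)\|^p\leq3(1+\e)^pQ^{(p)}(\Phi)^p\sum_i\|x_i\|^p$, that is $Q^{(p)}(\Phi_F)\leq3^{1/p}(1+\e)Q^{(p)}(\Phi)$. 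I expect the main obstacle to lie in the first paragraph: one has to reopen the proof of Lemma \ref{lem:bestsuited} to see that prescribing net points forces the interpolation $\Phi'(x_i)=\Phi(x_i)$, and to check that demanding $x_1,\dots,x_m\in S_0$ is compatible with maximality. Once the bounded $F$-supported perturbation $\Delta$ is isolated, the control of the $p$-linearity constant is routine and, pleasantly, costs no extra factor beyond the $3^{1/p}$ already present in Lemma \ref{lem:bestsuited}.
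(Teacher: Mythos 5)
Your proof is correct and follows essentially the same route as the paper's: apply Lemma~\ref{lem:bestsuited} to $\Phi|_F$ with the $\delta$-net enlarged to contain $x_1,\dots,x_m$, glue $\Phi'$ with $\Phi$ off $F$, and rerun the $p$-linearity estimate (the paper compresses this last step into ``repeat the proof of Lemma~\ref{lem:bestsuited} with $\Phi_F$ replacing $\Phi'$'', which is exactly your $\Delta$-computation). Your additional care in placing the $x_i$ inside $S_0$ and in discarding unimodular multiples is a welcome refinement: the paper's assertion that $\Phi'$ agrees with $\Phi$ at every net point tacitly requires the net points to serve as $S_0$-representatives, and you make that explicit.
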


\begin{proof}
Fix $\e>0$ and let us consider the map $\Phi|_F$ as a $p$-linear map from $F$ to $Y$. Let us apply Lemma~\ref{lem:bestsuited} to $\Phi|_F$. Be sure that the $\delta$-net ${f_1,\dots,f_n}$ appearing in the fourth line of the proof contains the set $\{x_1,\dots,x_m\}$ adding these points if necessary and observe that the outcoming map $\Phi':F\lra Y$ agrees with $\Phi$ at every $f_i$. We define $\Phi_F:X\lra Y$ taking
$$
\Phi_F(x)=\begin{cases}\Phi'(x)&\text{if $x\in F$}\\
\Phi(x)&\text{otherwise}\end{cases}
$$
Now, observe that  $\|\Phi-\Phi_F\|\leq (1+\e)Q^{(p)}(\Phi)$ and repeat the proof of Lemma~\ref{lem:bestsuited} with $\Phi_F$ replacing $\Phi'$.
\end{proof}

\subsection{Transferring the BAP from $X$ to $\co^{(p)}(X)$}

\begin{proposition}\label{eleuno} If $X$ is a $p$-Banach space with the $\lambda$-AP, then $\co^{(p)}(X)$ has
the $3^{1/p}\lambda$-AP.
\end{proposition}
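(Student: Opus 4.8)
The plan is to verify the defining property of the $3^{1/p}\lambda$-AP for $Z:=\co^{(p)}(X)$ by manufacturing, for each finite-dimensional $E\subseteq Z$ and each $\e>0$, a finite-rank endomorphism of $Z$ that fixes $E$ and has norm close to $3^{1/p}\lambda$. Since the evaluations $\delta_x=\mho(x)$ have dense linear span in $Z$, a routine perturbation argument reduces everything to the following task: given finitely many $x_1,\dots,x_m\in X$ and $\e>0$, produce a finite-rank $T\in L(Z)$ with $T\delta_{x_i}=\delta_{x_i}$ and $\|T\|\le 3^{1/p}\lambda+\e$; by the equivalent formulation of the AP recorded in the Definition this is enough. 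The operator $T$ will be the one attached by Theorem~\ref{th:univ} to a suitable $p$-linear map $\Psi:X\to Z$ that vanishes on $H$ and has finite-dimensional range: such a $\Psi$ produces $T$ with $T\circ\mho=\Psi$ and $\|T\|=Q^{(p)}(\Psi)$, whence $T\delta_x=\Psi(x)$ for every $x$, and $T$ has finite rank because its image lies in the (finite-dimensional) span of $\Psi[X]$.

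To build $\Psi$ I would first record the finite set $B=\bigcup_i\supp_H(x_i)$ of Hamel basis vectors occurring in the expansions of the $x_i$, and set $G=\span(B)$, a finite-dimensional coordinate subspace containing every $x_i$. Using the $\lambda$-AP of $X$ I choose a finite-rank $S\in L(X)$ with $Sg=g$ for all $g\in G$ and $\|S\|\le\lambda+\eta$, and put $F=S[X]$. Next I apply Lemma~\ref{con x} to the $p$-linear map $\mho:X\to Z$ (which satisfies $Q^{(p)}(\mho)\le1$ by Theorem~\ref{th:univ}) on the subspace $F$, taking as distinguished points the normalisations of the vectors in $B\cup\{x_1,\dots,x_m\}$. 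This yields a homogeneous $\mho_F:X\to Z$ that agrees with $\mho$ off $F$ and at every distinguished point, whose restriction $\mho_F[F]$ spans a finite-dimensional space, and with $Q^{(p)}(\mho_F)\le 3^{1/p}(1+\e)$. Setting $\Psi_0=\mho_F\circ S$, precomposition with the linear map $S$ gives $Q^{(p)}(\Psi_0)\le Q^{(p)}(\mho_F)\,\|S\|\le 3^{1/p}(1+\e)(\lambda+\eta)$, the range of $\Psi_0$ equals $\mho_F[F]$, and $\Psi_0(x_i)=\mho_F(Sx_i)=\mho_F(x_i)=\delta_{x_i}$ since $Sx_i=x_i$. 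Finally I replace $\Psi_0$ by $\Psi=\Psi_0-L$, where $L(x)=\sum_h\lambda_h(x)\Psi_0(h)$ is the unique linear map agreeing with $\Psi_0$ on $H$; then $\Psi$ vanishes on $H$, keeps the finite-dimensional range, and has $Q^{(p)}(\Psi)=Q^{(p)}(\Psi_0)$ because subtracting a linear map does not change the $p$-linearity constant.

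The one delicate point — and the step I expect to be the real obstacle — is that passing from $\Psi_0$ to its $H$-vanishing representative $\Psi$ threatens to destroy the interpolation, since a priori $\Psi(x_i)=\delta_{x_i}-L(x_i)$ with $L(x_i)=\sum_h\lambda_h(x_i)\,\mho_F(Sh)$, and this correction is in general of uncontrolled size. This is exactly where the careful choice of $S$ pays off: the only $h$ with $\lambda_h(x_i)\neq0$ lie in $B\subseteq G$, so $Sh=h$; and since each such $h$ was declared a distinguished point, Lemma~\ref{con x} gives $\mho_F(Sh)=\mho_F(h)=\mho(h)=\delta_h=0$ (recall that $\delta_h=0$ in $Z$ for every $h\in H$, as $\delta_h(\Phi)=\Phi(h)-\sum_{h'}\lambda_{h'}(h)\Phi(h')=0$). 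Hence $L(x_i)=0$ and the interpolation is preserved: $\Psi(x_i)=\delta_{x_i}$. Feeding $\Psi$ into Theorem~\ref{th:univ} (with target the finite-dimensional, hence separable, space $\span\Psi[X]$, so that no global separability hypothesis on $Z$ is needed) produces a finite-rank $T$ with $T\delta_{x_i}=\delta_{x_i}$ and $\|T\|=Q^{(p)}(\Psi)\le 3^{1/p}(1+\e)(\lambda+\eta)$. Letting $\e$ and $\eta$ tend to $0$ gives the bound $\|T\|\le 3^{1/p}\lambda+\e$, and the initial reduction transfers the exact fixing of the $\delta_{x_i}$ into the approximate fixing of $E$, establishing the $3^{1/p}\lambda$-AP of $\co^{(p)}(X)$.
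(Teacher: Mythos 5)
Your proposal is correct and follows essentially the same route as the paper's proof: reduce to a span of evaluations $[\delta_{x_1},\dots,\delta_{x_m}]$, pick a finite subset $H_0\subseteq H$ (your $B$) whose span contains the $x_i$, take a finite-rank $S$ fixing it, tune $\mho$ on $S[X]$ via Lemma~\ref{con x} with the $x_i$ and the basis vectors as distinguished points, pass to the $H$-vanishing version of $\mho'\circ S$, and invoke Theorem~\ref{th:univ}. The only (welcome) refinement is your applying the universal property with target the finite-dimensional span of $\Psi[X]$, which sidesteps the separability hypothesis in Theorem~\ref{th:univ} that the paper glosses over.
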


\begin{proof} Let
$\mho: X\lra \co^{(p)}(X)$ be the universal $p$-linear map appearing in Theorem~\ref{th:univ}.

Let $F$ be a finite dimensional
subspace of $\co^{(p)}(X)$ and pick $\e>0$. We can assume without loss of generality
that $F = [\mho (x_1), \dots, \mho (x_m)]$, where $x_j\in X$ for $1\leq j\leq m$. Let $H_0$ be a finite subset of the Hamel basis $H$ whose linear span contains $[x_1, \dots, x_m]$. Now, let $S$ be a finite rank operator on $X$ fixing $H_0$, with $\|S\|\leq \lambda+\e$.\medskip

Since $Q^{(p)}(\mho)=1$, applying Lemma~\ref{con x} we can construct a small perturbation $\mho':X \lra \co^{(p)}X$ satisfying:
\begin{itemize}\setlength\itemsep{-0.2em}
\item $\mho'(x_j)=\mho(x_j)$ for $1\leq j\leq m$,
\item $\mho'(h)=\mho(h)=0$ for $h\in H_0$,
\item $\|\mho'-\mho\|\leq 1+\e$,
\item $Q^{(p)}(\mho')\leq 3^{1/p}(1+\e)$ and
\item $\mho' \left( S[X] \right) $ spans a finite dimensional subspace of $\co^{(p)}X$.
\end{itemize}

Let $\Phi: X\lra \co^{(p)}X$ be the ``version'' of $\mho'\circ S$ that vanishes on $H$, that is,
$$
\Phi(x)=\mho'(S(x))- \sum_{h\in H}\lambda_h(x) \mho'(S(h)).
$$
The universal property of $\mho$ yields an operator $T:\co^{(p)}(X)\lra\co^{(p)}(X)$ such that
$
T\circ \mho= \Phi
$.
Let us check that $T$ has the required properties:
\begin{itemize}\setlength\itemsep{-0.1em}
\item $\|T\|= Q^{(p)}(\Phi)= Q^{(1)}(\mho'\circ S)\leq
Q^{(p)}(\mho')\|S\|\leq 3^{1/p}(1+\e)(\lambda+\e)$.
\item $T$ has finite rank: the image of $\mho'\circ S$ spans a finite dimensional subspace of $\co^{(p)}(X)$, say $E$, and therefore one also has $\Phi[X]\subset E$. Thus, $T(\delta_x)\in E$ for all $x\in X$  and since $\co^{(p)}(X)$ is the closure of the space spanned by the points of the form $\delta_x$ and $T$ is continuous we see that $T[\co^{(p)}X]\subset E$.
\item $T$ fixes $F$. Indeed, since $x_j=\sum_{h\in H_0}\lambda_h(x_j)h$ for $1\leq j\leq m$, one has\end{itemize}
$$
T(\mho(x_j))= \Phi(x_j)= \mho'(Sx_j)-\sum_{h\in H_0}\lambda_h(x_j)\mho'(Sh)
=
 \mho'(x_j)-\sum_{h\in H_0}\lambda_h(x_j)\mho'(h)=  \mho(x_j).
$$
 This completes the proof.
\end{proof}

\subsection{Projective presentations vs. $\co^{(p)}(X)\oplus_\mho X$}

\begin{proposition}\label{prop:vscop}
For each exact sequence of $p$-Banach spaces
$$
\begin{CD}
0@>>> Y@>>> Z@>>> X@>>> 0
\end{CD}
$$
there are operators $T$ and $S$ making commutative the diagram
\begin{equation}\label{isobvious}
\xymatrixcolsep{3.5pc}\xymatrix{
0\ar[r] & \co^{(p)}X\ar[r]^-\imath \ar[d]_T & \co^{(p)}\!X\!\oplus_\mho\! X \ar[r]^-\pi \ar[d]_S & X\ar[r]\ar@{=}[d] & 0\\
0\ar[r] & Y\ar[r]^\jmath  &Z \ar[r]^\varpi & X\ar[r] & 0
}
\end{equation}
\end{proposition}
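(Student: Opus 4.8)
The plan is to represent the given sequence by a $p$-linear map vanishing on the fixed Hamel basis $H$ and then feed it into the universal property of $\mho$ (Theorem~\ref{th:univ}); once the left arrow $T$ is produced, the middle arrow $S$ and the verification of the diagram are essentially forced.

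First I would fix a bounded homogeneous section $B\colon X\lra Z$ of $\varpi$ and a linear section $L\colon X\lra Z$ chosen so that $L(h)=B(h)$ for every $h\in H$; this determines $L$ completely, and since $\varpi L$ and the identity are linear maps agreeing on $H$ we get $\varpi L=\mathrm{id}_X$, so $L$ is indeed a linear selection. Putting $\Omega=B-L$ we obtain a homogeneous map with $\varpi\circ\Omega=0$, hence $\Omega$ takes values in $\ker\varpi=\jmath[Y]$ and may be read as a map $\Omega\colon X\lra Y$ which moreover vanishes on $H$ (because $B$ and $L$ coincide there). The assignment $(y,x)\mapsto\jmath(y)+L(x)$ is a bounded linear bijection exhibiting $Y\oplus_\Omega X$ as equivalent to the given sequence; in particular $Y\oplus_\Omega X$ is isomorphic to the $p$-normed space $Z$, so Lemma~\ref{convex} guarantees that $\Omega$ is $p$-linear.

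Next, Theorem~\ref{th:univ} applied to the $p$-linear map $\Omega\colon X\lra Y$ (this is where separability of $Y$ enters, as is available in our setting) produces a unique operator $T\colon\co^{(p)}(X)\lra Y$ with $\Omega=T\circ\mho$; this $T$ is the left vertical arrow. For the middle arrow I would simply set
$$
S(\xi,x)=\jmath(T\xi)+L(x),\qquad (\xi,x)\in\co^{(p)}(X)\oplus_\mho X,
$$
which is manifestly linear. It then remains to check three things. Commutativity of the left square is immediate, $S(\imath\xi)=S(\xi,0)=\jmath(T\xi)$, and that of the right square equally so, $\varpi S(\xi,x)=\varpi\jmath(T\xi)+\varpi L(x)=x=\pi(\xi,x)$. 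Only boundedness of $S$ needs a line: writing $L=B-\jmath\Omega$ and using $\Omega=T\mho$ gives $S(\xi,x)=\jmath\big(T(\xi-\mho x)\big)+B(x)$, so that $\|S(\xi,x)\|\leq\Delta\big(\|T\|\,\|\xi-\mho x\|+\|B\|\,\|x\|\big)\leq C\,\|(\xi,x)\|_\mho$, since $\jmath$ is isometric, $B$ is bounded, and $\|(\xi,x)\|_\mho=\|\xi-\mho x\|+\|x\|$. I expect the only genuinely delicate point to be the first step: manufacturing a $p$-linear representative $\Omega$ that vanishes on $H$ so that the universal property applies verbatim; after that, $T$ does all the work and the construction of $S$ is routine.
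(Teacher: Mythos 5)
Your proposal is correct and takes essentially the same route as the paper: a bounded homogeneous section $B$ of $\varpi$, the linear map $L$ agreeing with $B$ on the Hamel basis $H$, the map $\Phi=B-L$ (valued in $Y$, vanishing on $H$) fed into Theorem~\ref{th:univ} to produce $T$, and then $S(u,x)=Tu+Lx$ with the same rewriting $S(u,x)=T(u-\mho x)+B(x)$ to get boundedness. The only cosmetic difference is how $p$-linearity of $B-L$ is justified: the paper uses the one-line direct estimate $Q^{(p)}(B-L)=Q^{(p)}(B)\leq 3^{1/p}\|B\|$, whereas you detour through Lemma~\ref{convex} via the isomorphism $Y\oplus_{\Phi}X\cong Z$ -- which works, but strictly speaking requires first noting that $B-L$ is quasilinear (so that $\|\cdot\|_\Phi$ is a quasinorm and the lemma applies), a check that already amounts to the paper's direct computation.
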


\begin{proof}
There is no loss of generality in assuming that $Y=\ker\varpi$ and $\jmath$ is the inclusion mapping. Let $B:X\lra Z$ be a homogeneous bounded section of the quotient map, that is, $\varpi\circ B={\bf 1}_X$ and let $L:X\lra Z$ be the only linear map that agrees with $B$ on $H$, that is,
$$
L(x)=\sum_{h\in H}\lambda_h(x)B(h).
$$
Then $\Phi=B-L$ takes values in $Y$ and is $p$-linear, with $Q^{(p)}(\Phi)= Q^{(p)}(B)\leq 3^{1/p}\|B\|$, so there is an operator $T:\co^{(p)}(X)\lra Y$ such that $\Phi=T\circ\mho$.

Define $S(u,x)=Tu+Lx$ and let us check that $S$ is bounded -- the commutativity of (\ref{isobvious}) is obvious. Note that $L=B-\Phi$, hence
$$
\begin{aligned}
\|S(u,x)\|&=\|Tu+Lx\|=\|Tu-\Phi(x)+B(x)\|\\
&=2^{1/p}\left(\|Tu-T\mho(x)\|+\|B\|\|x\|\right)\leq 2^{1/p}\max\left(\|T\|,\|B\|\right)\|(u,x)\|_\mho,
\end{aligned}
$$
as required.
\end{proof}

\subsection{The diagonal sequence}
Regarding the diagram appearing in Proposition~\ref{prop:vscop} we must add the following:

\begin{lemma}\label{lem:diagonal}
Suppose we are given a commutative diagram of quasi Banach spaces and operators
$$
\begin{CD}
0@>>> A @>\imath>> B @>>> C @>>> 0\\
 & & @VTVV @VSVV @| \\
0@>>> D @>\jmath>> E @>>> C @>>> 0
\end{CD}
$$
with exact rows. Then the sequence
$$
\begin{CD}
0@>>> A @>\imath\times(-T)>> B\times D @>S\oplus\jmath>> E @>>> 0
\end{CD}
$$
where $(\imath\times(-T))(a)=(\imath(a),-T(a))$ and $(S\oplus\jmath)(b,d)=Sb+\jmath(d)$  is exact.
\end{lemma}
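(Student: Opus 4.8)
The plan is to prove the sequence exact in two stages: first establish algebraic exactness by a direct diagram chase, and then upgrade this to exactness in the category of quasi Banach spaces by invoking the open mapping theorem, which is available because all the spaces in sight are complete. Write $\rho\colon B\to C$ and $\sigma\colon E\to C$ for the (unnamed) quotient maps of the top and bottom rows, so that the commutativity hypotheses read $S\imath=\jmath T$ and $\sigma S=\rho$. Abbreviate $\alpha=\imath\times(-T)$ and $\beta=S\oplus\jmath$; both are operators, being assembled from the operators $\imath,T,S,\jmath$ by products and sums, and $\beta$ is bounded because addition is continuous in a quasi Banach space.

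Exactness at $A$ is immediate: if $\alpha(a)=(\imath a,-Ta)=0$ then $\imath a=0$, and $\imath$ is injective, so $a=0$. For exactness at $E$ I would argue by lifting: given $e\in E$, surjectivity of $\rho$ provides $b\in B$ with $\rho b=\sigma e$; then $\sigma(Sb)=\rho b=\sigma e$ gives $\sigma(e-Sb)=0$, so $e-Sb\in\ker\sigma=\jmath[D]$, say $e-Sb=\jmath d$, whence $\beta(b,d)=Sb+\jmath d=e$.

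The heart of the matter is exactness at $B\times D$. The inclusion $\operatorname{im}\alpha\subseteq\ker\beta$ is nothing but the left-hand commutativity, since $\beta(\alpha a)=S\imath a-\jmath Ta=0$. For the reverse inclusion, suppose $\beta(b,d)=Sb+\jmath d=0$. Applying $\sigma$ and using $\sigma\jmath=0$ yields $\sigma Sb=0$, i.e. $\rho b=0$, so $b\in\ker\rho=\imath[A]$; write $b=\imath a$. Then $Sb=S\imath a=\jmath Ta$, and combining this with $Sb=-\jmath d$ gives $\jmath(Ta+d)=0$. Here the injectivity of $\jmath$ forces $d=-Ta$, so that $(b,d)=(\imath a,-Ta)=\alpha(a)$. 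This last step, where injectivity of $\jmath$ is indispensable, is the single delicate point of the whole argument and the place to watch.

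Finally, to pass from algebraic to topological exactness, I would observe that $B\times D$ carries its natural product quasinorm and is complete, that $\ker\beta$ is closed because $\beta$ is continuous, and that, by the computations above, $\alpha$ is a continuous bijection of $A$ onto $\ker\beta$ while $\beta$ is a continuous surjection onto $E$. The open mapping theorem for quasi Banach spaces then forces $\alpha$ to be a relatively open embedding and $\beta$ to be a quotient map, which is precisely what is required. I expect no genuine obstacle beyond this bookkeeping: the entire content of the lemma is the diagram chase, and the topological conclusions follow automatically from completeness.
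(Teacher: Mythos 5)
Your proof is correct, and since the paper deliberately omits the argument (``We will not spoil the reader's fun writing a proof down''), your diagram chase is precisely the intended one: exactness at $A$, at $B\times D$ via $\sigma S=\rho$ and injectivity of $\jmath$, and at $E$ by lifting through $\rho$. Note only that the paper's definition of exactness asks just that kernels equal images for continuous operators, so your final open-mapping-theorem step is a correct but strictly optional bonus rather than a needed part of the proof.
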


We will not spoil the reader's fun writing a proof down.

\subsection{The end of all this}

\begin{corollary}\label{cor:theend}
Let $Y$ be  a closed subspace of $\ell_p$. If $\ell_p/Y$ has the BAP, then $Y$ has the BAP.
\end{corollary}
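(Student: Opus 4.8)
The plan is to realize $Y$ as the kernel of the quotient map and to play the given sequence off against the universal one of (\ref{univ:seq}). Write $X=\ell_p/Y$, so that we are handed the exact sequence
$$
\begin{CD}
0@>>> Y@>>> \ell_p@>>> X@>>> 0,
\end{CD}
$$
and by hypothesis $X$ has the $\lambda$-AP for some $\lambda$. The first step is to transport this up to the nonlinear envelope: Proposition~\ref{eleuno} tells us that $\co^{(p)}(X)$ has the $3^{1/p}\lambda$-AP, in particular the BAP. This is the only place where the hypothesis on $X$ is used.

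Next I would link the two sequences. Applying Proposition~\ref{prop:vscop} to the sequence above produces operators $T$ and $S$ fitting into the commutative diagram
$$
\xymatrixcolsep{3.5pc}\xymatrix{
0\ar[r] & \co^{(p)}X\ar[r]^-\imath \ar[d]_T & \co^{(p)}\!X\!\oplus_\mho\! X \ar[r]^-\pi \ar[d]_S & X\ar[r]\ar@{=}[d] & 0\\
0\ar[r] & Y\ar[r]  & \ell_p \ar[r] & X\ar[r] & 0.
}
$$
Feeding this diagram into Lemma~\ref{lem:diagonal} (with $A=\co^{(p)}(X)$, $B=\co^{(p)}(X)\oplus_\mho X$, $C=X$, $D=Y$ and $E=\ell_p$) yields the diagonal exact sequence
$$
\begin{CD}
0@>>> \co^{(p)}(X)@>>> \big(\co^{(p)}(X)\oplus_\mho X\big)\times Y @>>> \ell_p@>>> 0.
\end{CD}
$$

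The decisive point is that this sequence ends in $\ell_p$, which is projective in the category of $p$-Banach spaces. Hence the surjection onto $\ell_p$ admits a section and the diagonal sequence splits, giving an isomorphism
$$
\big(\co^{(p)}(X)\oplus_\mho X\big)\times Y \;\cong\; \co^{(p)}(X)\oplus \ell_p.
$$
Both summands on the right have the BAP --- the first by the opening step and $\ell_p$ among the standard examples --- and a finite direct sum of $p$-Banach spaces with the BAP again has the BAP (combine the fixing operators coordinatewise). Consequently the product on the left has the BAP, and since $Y$ occurs there as a direct factor it is complemented in a space with the BAP. As the BAP is inherited by complemented subspaces (compose a fixing operator on the big space with the projection onto $Y$ and the inclusion of $Y$), we conclude that $Y$ has the BAP.

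The two permanence properties invoked at the end --- stability of the BAP under finite direct sums and its inheritance by complemented subspaces --- are routine and hold verbatim in the $p$-Banach setting; note that everything in sight is separable, so the construction of $\co^{(p)}(X)$ is legitimate. The genuinely load-bearing step is the splitting of the diagonal sequence, which rests solely on the projectivity of $\ell_p$. This is exactly the structural feature that, in the absence of local convexity, replaces the $\mathscr L_1$--$\mathscr L_\infty$ duality used by Lusky in the case $p=1$.
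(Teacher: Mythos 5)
Your proposal is correct and follows essentially the same route as the paper's own proof: Proposition~\ref{eleuno} to give $\co^{(p)}(X)$ the BAP, Proposition~\ref{prop:vscop} plus Lemma~\ref{lem:diagonal} to produce the diagonal sequence ending in $\ell_p$, and projectivity of $\ell_p$ to split it, so that $Y$ becomes complemented in $\co^{(p)}(X)\oplus\ell_p$. The only difference is that you spell out the permanence properties (BAP under finite sums and complemented subspaces) that the paper leaves implicit.
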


\begin{proof}
Set $X=\ell_p/Y$ and consider the exact sequence $$
\begin{CD}
0@>>> Y@>\jmath >> \ell_p@> \varpi >> X@>>> 0.
\end{CD}
$$
Now, use Proposition~\ref{prop:vscop} to put it into a diagram
$$
\xymatrixcolsep{3.5pc}\xymatrix{
0\ar[r] & \co^{(p)}X\ar[r]^-\imath \ar[d]_T & \co^{(p)}\!X\!\oplus_\mho \!X \ar[r]^-\pi \ar[d]_S & X\ar[r] \ar@{=}[d]  & 0\\
0\ar[r] & Y\ar[r]^\jmath  &\ell_p \ar[r]^\varpi & X\ar[r] & 0
}
$$
Lemma~\ref{lem:diagonal} provides an exact sequence
$$
\begin{CD}
0@>>> \co^{(p)}(X)@>>> Y\oplus\left(\co^{(p)}(X)\oplus_\mho X\right) @>>>  \ell_p@>>> 0
\end{CD}
$$
which splits, since $\ell_p$ is projective in the category of $p$-Banach spaces. In particular $ Y\oplus\left(\co^{(p)}(X)\oplus_\mho X\right)$ is isomorphic to
$\co^{(p)}(X)\oplus \ell_p$ and since $\co^{(p)}(X)$ and $\ell_p$ have the BAP so $Y$ and $\co^{(p)}(X)\oplus_\mho X$ have it.
\end{proof}

\subsection{What about the converse?}
The converse of the preceding Corollary is know to be false for $p=1$ by results of Szankowski \cite{szan}: there are subspaces $Y$ of $\ell_1$ having the BAP such that $\ell_1/Y$ fails it; cf. \cite[pp. 258--259]{CM-Isr}. For $p<1$ the situation is even comical: Kalton proved in \cite{k-loc} that the kernel of any quotient map $Q:\ell_p\lra L_p$ has a basis! Note that $L_p$ fails the BAP in a very strong way since its dual is zero.

By the way, the ideas of \cite{k-loc} can be used to generalize Corollary~\ref{cor:theend} for the so-called discrete $\mathscr L_p$-spaces for $p<1$. We refer the reader to \cite{k-loc} for all unexplained terms, in particular for the definition of the $\mathscr L_p$-spaces when $p<1$ Here, we only recall that the kernel of any quotient map $\ell_p\lra L_p$ is a discrete $\mathscr L_p$-space not isomorphic to $\ell_p$. Suppose $Q: \mathscr D_p\lra X$ is a quotient map, where $\mathscr D_p$ is a discrete $\mathscr L_p$-space, $X$ has the BAP and $p<1$. Set $Y=\ker Q$. Proceeding as in the proof of Corollary~\ref{cor:theend}, replacing $\ell_p$ by $\mathscr D_p$, we arrive to an exact sequence of $p$-Banach spaces
$$
\begin{CD}
0@>>> \co^{(p)}(X)@>>> Y\oplus\left(\co^{(p)}(X)\oplus_\mho X\right) @>>>  \mathscr D_p@>>> 0
\end{CD}
$$
This sequence splits locally (every sequence of $p$-Banach spaces whose quotient space is a $\mathscr L_p$-space does). Besides, $\co^{(p)}(X)$ and $\mathscr D_p$ have the BAP (see \cite{k-loc}) from which it is relatively easy to obtain that the middle space
$Y\oplus\left(\co^{(p)}(X)\oplus_\mho X\right)$ has the BAP and so $Y$ has it.

\section{Spaces of almost universal complemented disposition}\label{sec:aucd}

The remainder of the paper is devoted to the study the ``largest'' separable $p$-Banach space with the BAP.
Officialy, we exhibit a separable $p$-Banach which is ``complementably universal'' for the BAP in the sense that it contains a complemented copy of each separable $p$-Banach space with the BAP.

The existence of such objects, which can be seen as the $p$-Banach counterparts of Kadec/Pe\l czy\'nski/Wojtaszczyk spaces in \cite{kade, pelcuni, p-w}, was observed long time ago by Kalton in \cite{kaltuni}, a paper dealing exclusively with ``isomorphic'' properties.

Why are we bringing up these relics? Because they can be obtained and characterized (isometrically) as spaces of ``almost universal complemented disposition''.
This notion is implicit in Garbuli\'nska-W\c egrzyn's work \cite{GW} and then made explicit and studied for Banach spaces in \cite{CM-ACUD}.

Let us recall that a $p$-Banach space $X$ is of ``almost universal disposition'' if whenever $F$ is a finite dimensional $p$-normed space, $E$ is a subspace of $F$ and $u:E\lra X$ is an isometry, then for each $\e>0$ there is an $\e$-isometry from $F$ to $X$ extending $u$.
Diagramatically:
$$
\xymatrix{
E\ar[rr]^{\text{isometry}}\ar[rd]_{\text{inclusion}} & & F \ar[ld]^{\text{$\e$-isometry}}\\
& X
}
$$
This property was first considered by Gurariy in \cite{gurari}.
We refer the reader to \cite{CGK} and \cite[Chapter~3]{Av} for further information on spaces of universal disposition and proper references.

Restricting our attention to 1-complemented subspaces,
it is quite natural to consider the following property that a $p$-Banach space $X$ may or may not have:
\medskip

[$\Game$] If $F$ is a finite dimensional $p$-normed space, $E$ is a 1-complemented subspace of $F$ and $u:E\lra X$ is an isometry with 1-complemented range, then for every $\e>0$ there is an $\e$-isometry $F\lra X$ with $(1+\e)$-complemented range extending $u$.
\medskip

This is an obvious adaptation of property (E) in \cite[p. 218]{GW}. Observe that one does not require the projections to be ``compatible'' in any sense.
For tactical reasons it is better to consider the structure embedding/projection as a whole, so let us give the pertinent definitions  and introduce some special notations that shall be used in the sequel.

\subsection{Pairs}
A pair $u=\langle u^\flat, u^\sharp\rangle$ consists of two operators $u^\flat: E\lra F$ and  $u^\sharp: F\lra E$ such that $u^\sharp u^\flat={\bf 1}_E$. Thus, $u^\flat$ is an embedding of $E$ into $F$ and $u^\sharp$ is a projection along $u^\flat$.

We find most comfortable to use the notation  $
u: \xymatrix{
E  \ar@<0.3ex>[r]
& F \ar@<0.3ex>@{.>}[l] }
$
for pairs, with the understanding that the ``solid'' arrow represents the embedding part $u^\flat$, the ``dotted'' arrow is the projection part $u^\sharp$, the space $E$ is the ``domain'' of $u$ and $F$ is the ``codomain''.

The composition of $u: \xymatrix{
E  \ar@<0.3ex>[r]
& F \ar@<0.3ex>@{.>}[l] }
$ and $v: \xymatrix{
F  \ar@<0.3ex>[r]
& G \ar@<0.3ex>@{.>}[l] }
$
is, as one can expect, $v\circ u= \langle v^\flat  u^\flat, u^\sharp  v^\sharp\rangle$.

We measure the ``size'' of a pair taking $\|u\|=\max\left( \|u^\flat\|, \|u^\sharp\|\right)$.
Note that $\|u\|\geq 1$ (unless $E=0$) and that $\|u\|\leq 1+\e$ implies that $u^\flat$ is an $\e$-isometry. If $\|u\|=1$ we say that $u$ is contractive.

\begin{definition}\label{def:aucd}
A $p$-normed space $X$ is said to be of almost universal complemented disposition if for all contractive pairs $
u: \xymatrix{
E  \ar@<0.3ex>[r]
& X \ar@<0.3ex>@{.>}[l] }
$
and $
v: \xymatrix{
E  \ar@<0.3ex>[r]
& F \ar@<0.3ex>@{.>}[l] }
$, where $F$ is a finite dimensional $p$-normed space, and every $\e>0$, there exist a pair $
w: \xymatrix{
F  \ar@<0.3ex>[r]
& X \ar@<0.3ex>@{.>}[l] }
$ such that $u=w\circ v$ and $\|w\|\leq 1+\e$.
\end{definition}

The situation is illustrated by the following diagram where both the solid arrows (emdeddings) and the dotted arrows (projections) commute:
$$
\xymatrixcolsep{3.5pc}\xymatrix{
E  \ar@<0.3ex>[rr]^{v^\flat} \ar@<0.3ex>[dr]^{u^\flat}
&  & F \ar@<0.3ex>@{.>}[ll]^{v^\sharp} \ar@<0.3ex>[ld]^{w^\flat}  \\
& X \ar@<0.3ex>@{.>}[ur]^{\quad w^\sharp}  \ar@<0.3ex>@{.>}[ul]^{u^\sharp}
 }
$$
Hence the notion of a space of almost universal complemented disposition is formally stronger than $[\Game]$.

Note that, according to our definitions, the ``null pair''
$
\xymatrix{
0  \ar@<0.3ex>[r]
& F \ar@<0.3ex>@{.>}[l] }
$
is contractive. This excludes from Definition~\ref{def:aucd} spaces with trivial dual  and prevents them from having property $[\Game]$.

\subsection{Two correction lemmas}
This Section contains a couple of results that allow us to gain some flexibility in the main proofs. The first one is not really concerned with pairs although every linear surjective isomorphism $f:X\lra Y$ can be ``expanded'' to a pair $
\langle f,f^{-1}\rangle: \xymatrix{
X  \ar@<0.3ex>[r]
& Y \ar@<0.3ex>@{.>}[l] }
$.

\begin{lemma}[Small automorphisms]\label{lem:small}
Let $E$ be a finite dimensional subspace of a $p$-Banach space $X$ which is complemented throught a projection $P$ and let $e_1,\ldots, e_k$ be a normalized basis of $E$.

For every $\e>0$ there is $\delta>0$, depending on $\e$ and $E$, such that if $x_i\in X$ are such that $\|e_i-x_i\|<\delta$ for $1\leq i\leq k$ and  we define a linear map $f:X\lra X$ by
$$
f(x)=\begin{cases}
x_i & \text{ if $x=e_i$ for $1\leq i\leq k$}\\
x &\text{ if $x\in\ker P$}
\end{cases}
$$
then $\|f-{\bf 1}_X\|<\e$.
\end{lemma}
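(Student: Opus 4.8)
The plan is to show that $f$ is a small perturbation of the identity by first writing any $x$ in terms of its components along $E$ and $\ker P$, then estimating $(f-\mathbf{1}_X)x$ componentwise and controlling the $E$-part coefficients uniformly. First I would decompose $x = Px + (x-Px)$, where $Px \in E$ and $x - Px \in \ker P$. On $\ker P$ the map $f$ is the identity, so $(f - \mathbf{1}_X)(x - Px) = 0$; the entire action of $f - \mathbf{1}_X$ is concentrated on the $E$-component. Writing $Px = \sum_{i=1}^k \lambda_i(x)\, e_i$ in the normalized basis, linearity of $f$ gives
$$
(f-\mathbf{1}_X)(x) = f(Px) - Px = \sum_{i=1}^k \lambda_i(x)\,(x_i - e_i),
$$
so by $p$-subadditivity of the quasinorm,
$$
\|(f-\mathbf{1}_X)(x)\|^p \leq \sum_{i=1}^k |\lambda_i(x)|^p\, \|x_i - e_i\|^p \leq \delta^p \sum_{i=1}^k |\lambda_i(x)|^p.
$$

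The crux is then a uniform bound on the coordinate functionals. Since $E$ is finite-dimensional, the coefficient maps $\lambda_i : E \lra \K$ are continuous, and because $E$ is complemented by $P$ in $X$ the composites $\lambda_i \circ P$ are bounded linear functionals on all of $X$. Hence there is a constant $C = C(E,P)$ with $\left(\sum_{i=1}^k |\lambda_i(x)|^p\right)^{1/p} \leq C\,\|x\|$ for every $x\in X$; concretely one may take $C$ built from $\max_i \|\lambda_i\|_{E^*}$ and $\|P\|$, using the equivalence of all quasinorms on the finite-dimensional $E$. Combining with the previous estimate yields $\|(f-\mathbf{1}_X)(x)\| \leq C\delta\,\|x\|$, so it suffices to choose $\delta < \e/C$ to obtain $\|f - \mathbf{1}_X\| < \e$.

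The only genuine obstacle is verifying the uniform coefficient bound $\sum_i|\lambda_i(x)|^p \leq C^p\|x\|^p$, i.e. ensuring that the dependence of $\delta$ on $E$ (and $P$) is exactly what absorbs the possibly large norms of the coordinate functionals; this is where finite-dimensionality of $E$ and the existence of the projection $P$ are both essential. Everything else is a routine application of the $p$-norm inequality and homogeneity, so I would keep those computations brief and emphasize that $\delta$ is chosen after $E$, $P$, and $\e$ are fixed, exactly as the statement permits.
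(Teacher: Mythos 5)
Your proof is correct and follows essentially the same route as the paper's: decompose $x$ through the projection $P$, expand the $E$-component in the basis, apply $p$-subadditivity to reduce everything to the $\ell_p$-sum of coefficients, and absorb that sum via the finite-dimensional equivalence constant on $E$ together with $\|P\|$, ending with $\delta$ of order $\e/(K\|P\|)$. The only cosmetic difference is that you fold the two constants into a single $C$ while the paper keeps $K$ and $\|P\|$ separate.
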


\begin{proof}Take $K$ so large that $\left(\sum_i|\lambda_i|^p\right)^{1/p}\leq K\left\|\sum_i\lambda_ie_i\right\|$. Pick $x\in X$ and write $x=y+z$, with $y=Px$ and then $y=\sum_i\lambda_ie_i$. Then since $z\in\ker P$ one has
$$
\|fx-x\|=  \|fy-y\|= \left\|\sum_i\lambda_ix_i- \sum_i\lambda_ie_i\right\|
\leq \delta \left(\sum_i|\lambda_i|^p\right)^{1/p}\!\!\leq \delta K \|y\|\leq \delta K \|P\|\|x\|.
$$
Hence $\delta=\e/(K\|P\|)$ suffices.
\end{proof}

The hypothesis on $E$ is necessary: there are $p$-Banach spaces whose only endomorphisms are the scalar multiples of the identity \cite{rigid}.

\begin{lemma}[Correction of the bound]\label{lem:bound}
If $
u: \xymatrix{
E  \ar@<0.3ex>[r]
& F \ar@<0.3ex>@{.>}[l] }
$ is a pair with $\|u\|\leq 1+\e$, then there is a $p$-norm $|\cdot|$ on $F$ such that
\begin{equation}\label{ineq}
(1+\e)^{-1}\|f\|\leq |f| \leq (1+\e) \|f\|\quad\quad(f\in F)
\end{equation}
and $u$ becomes contractive when the original $p$-norm of $F$ is replaced by $|\cdot|$.
\end{lemma}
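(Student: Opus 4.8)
The plan is to write down the corrected $p$-norm by hand as a maximum of two functionals: the seminorm obtained by pulling the $p$-norm of $E$ back through the projection part $u^\sharp$, together with a slightly shrunken copy of the original $p$-norm of $F$. Concretely, for $f\in F$ I would set
\[
|f|=\max\left(\|u^\sharp f\|,\ (1+\e)^{-1}\|f\|\right),
\]
where $\|u^\sharp f\|$ is measured with the (unchanged) $p$-norm of $E$ and $\|f\|$ with the original $p$-norm of $F$. The first thing to check is that $|\cdot|$ is genuinely a $p$-norm. Homogeneity is clear, and definiteness is free because the second term already separates points. For $p$-subadditivity I would use that the maximum of $p$-subadditive functionals is again $p$-subadditive: the map $f\mapsto\|u^\sharp f\|$ is a $p$-seminorm and $f\mapsto(1+\e)^{-1}\|f\|$ is a $p$-norm, so each term $N$ satisfies $N(f+g)^p\le N(f)^p+N(g)^p\le|f|^p+|g|^p$, and taking the maximum (recalling that $t\mapsto t^p$ is increasing) yields $|f+g|^p\le|f|^p+|g|^p$.

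Next I would verify the two-sided estimate (\ref{ineq}). The lower bound $|f|\ge(1+\e)^{-1}\|f\|$ is immediate from the definition. For the upper bound I would use $\|u\|\le 1+\e$, which gives $\|u^\sharp f\|\le\|u^\sharp\|\,\|f\|\le(1+\e)\|f\|$, while of course $(1+\e)^{-1}\|f\|\le(1+\e)\|f\|$; hence $|f|\le(1+\e)\|f\|$.

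Finally I would show that $u$ becomes contractive, i.e. both $\|u^\sharp\|\le 1$ and $\|u^\flat\|\le 1$ for the new norm. The projection part is automatic, since $\|u^\sharp f\|\le|f|$ for every $f$ by construction, so $u^\sharp\colon(F,|\cdot|)\lra E$ has norm at most $1$. For the embedding part the defining pair relation $u^\sharp u^\flat=\mathbf 1_E$ carries the argument: for $e\in E$,
\[
|u^\flat e|=\max\left(\|u^\sharp u^\flat e\|,\ (1+\e)^{-1}\|u^\flat e\|\right)=\max\left(\|e\|,\ (1+\e)^{-1}\|u^\flat e\|\right)=\|e\|,
\]
where the last equality holds because $\|u^\flat e\|\le(1+\e)\|e\|$ forces the second term to be at most $\|e\|$. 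Thus $u^\flat$ is in fact an isometry into $(F,|\cdot|)$, so $\|u\|=\max(\|u^\flat\|,\|u^\sharp\|)\le 1$ (the degenerate case $E=0$ being trivial). The only real content of the argument is guessing the right functional; once it is written down each claim is a one-line verification, and the single place where the hypotheses genuinely enter is the computation of $|u^\flat e|$, which simultaneously uses $u^\sharp u^\flat=\mathbf 1_E$ and the bound $\|u^\flat\|\le 1+\e$.
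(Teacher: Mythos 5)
Your proof is correct, but it takes a genuinely different (in fact dual) route to the paper's. The paper defines the corrected norm by the infimum formula
$$
|f|=\inf\left\{ \left(\|x\|^p+(1+\e)^p\|g\|^p\right)^{1/p} : f=u^\flat(x)+g,\ x\in E,\ g\in F\right\},
$$
that is, its unit ball is the $p$-convex hull of $u^\flat[B_E]\cup(1+\e)^{-1}B_F$ --- the \emph{smallest} admissible ball. With that choice the upper bound in (\ref{ineq}) and the isometric character of $u^\flat$ come essentially for free, and the work lies in the lower bound and in checking $\|u^\sharp\|\leq 1$. Your maximum formula goes the opposite way: your unit ball is $\{f:\|u^\sharp f\|\leq 1\}\cap(1+\e)B_F$, the \emph{largest} ball making $u^\sharp$ contractive and satisfying the lower bound, and the only work lies in the upper bound and in the computation of $|u^\flat e|$, which correctly combines $u^\sharp u^\flat={\bf 1}_E$ with $\|u^\flat\|\leq 1+\e$. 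Your approach buys a painless verification that $|\cdot|$ is a $p$-norm (a maximum of $p$-seminorms), where the paper's ``Clearly, this is $p$-norm'' glosses over a small computation. What the paper's formula buys is reusability: the infimum construction is exactly clause (4) in the definition of allowed $p$-norms in Section~\ref{sec:allowed}, and the proof of Lemma~\ref{lem:density} invokes precisely this formula, citing the proof of Lemma~\ref{lem:bound}, to conclude that the corrected norm is allowed. Your max norm is not literally of that form (though it could be reached through clauses (2) and (3), writing it as the pull-back of a product norm along $f\mapsto(u^\sharp f, f)$), so it would not slot into that later argument without modification; as a proof of the lemma as stated, however, it is complete.
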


\begin{proof}
The hypotheses imply that $u^\flat$ is an $\e$-isometry. The unit ball of the new $p$-norm of $F$ has to be the $p$-convex hull of the set
$$
u^\flat[B_E]\bigcup (1+\e)^{-1}B_F.
$$
Hence we define
$$
|f|= \inf\left\{ \left(	\|x\|^p+ (1+\e)^p\|g\|^p\right)^{1/p}: f=u^\flat(x)+g, x\in E, g\in F \right\}.
$$
Clearly, this is $p$-norm. Let us see that everything works fine. First, taking $x=0$ and $g=f$ we have $|f|\leq  (1+\e) \|f\|$. The other inequality of (\ref{ineq}) is as follows: if $f=u^\flat(x)+g$, then
$$
\|x\|^p+(1+\e)^p\|g\|^p=  \|x\|^p+(1+\e)^p\|f-u^\flat(x)\|^p\geq \frac{\|u^\flat x\|^p +\|f-u^\flat(x)\|^p}{(1+\e)^p}\geq \frac{\|f\|^p}{(1+\e)^p},
$$
hence $|f|\geq (1+\e)^{-1}\|f\|$.

Let us compute the ``new'' quasinorms of $u^\flat$ and $u^\sharp$.

Given $x\in E$ one has $|u^\flat x|^p\leq \|x\|^p$, so the quasinorm of $u^\flat$  is at most 1. Actually it is clear that  $|u^\flat x|= \|x\|$ for all $x\in E$. Indeed we have
$$
\begin{aligned}
|u^\flat x|^p&= \inf\left\{	\|y\|^p+ (1+\e)^p\|g\|^p: u^\flat x=u^\flat(y)+g, y\in E, g\in F \right\}\\
&=  \inf\left\{	\|y\|^p+ (1+\e)^p\|u^\flat(x-y)\|^p: y\in E \right\}\\
&\geq   \inf\left\{	\|y\|^p+ \|x-y\|^p: y\in E \right\}\\
& =\|x\|^p.
\end{aligned}
$$
Finally we check that
$$
|u^\sharp|=\sup_{|f|\leq 1}\|u^\sharp f\|= \sup_{|f|< 1}\|u^\sharp f\| \leq  1.
$$
If $|f|<1$, we can write $f=u^\flat(x)+ g$, with $\|x\|^p+(1+\e)^p\|g\|^p<1$. Hence
$$
\begin{aligned}
\|u^\sharp f\|&= \|u^\sharp(u^\flat x+ g)\|
= \|x+u^\sharp g\|\\
&\leq \left(\|x\|^p+\|u^\sharp g\|^p\right)^{1/p}\leq \left(\|x\|^p+(1+\e)^p\|g\|^p\right)^{1/p}< 1.
\end{aligned}\vspace{-22pt}
$$
\end{proof}

\subsection{The category of allowed pairs}\label{sec:allowed}

We now define the restricted class of pairs between finite dimensional spaces that will be used in the main construction.

A point $x\in \mathbb K^n$ is said to be rational if all its coordinates are rational. When $\mathbb K=\mathbb C$ this means that both the real and imaginary parts are rational numbers.
A linear map $f:\K^n\lra \K^m$ is said to be rational if it carries rational points  into rational points.

 A rational $p$-norm on $\mathbb K^n$ is one whose unit ball is the $p$-convex hull of a finite set of rational points. Thus, a rational $p$-norm is given by the formula
$$
|x|=\inf\left\{\left(\sum_i|\lambda_i|^p\right)^{1/p}: x =\sum_i\lambda_i x_i\right\}
$$
where $(x_i)$ is a finite set of rational points.

For each $n\in\N$ let $\mathcal{N}_n$ be the set of all $p$-norms on $\K^n$, where $\K^0$ is understood as $0$. Put
$$\mathcal N= \bigsqcup_{n\geq 0} \mathcal{N}_n.$$

We define a class of $p$-norms, that for lack of a better name we call ``allowed $p$-norms'' (formally, a subset of $\mathcal{N}$) recursively, as follows:

\begin{enumerate}
\item Each rational $p$-norm is allowed.
\item If $f:\K^n\lra \K^m$ is rational and injective and $|\cdot|$ is an allowed $p$-norm on $\K^m$, then $\|x\|=|f(x)|$ is an allowed $p$-norm on $\K^n$.
\item If $|\cdot|_1$ and $|\cdot|_2$ are allowed $p$-norms on $\K^n$ and $\K^m$, respectively, then the direct product $\|(x,y)\|=\max(|x|_1, |y|_2)$ is an allowed $p$-norm on $\K^{n+m}$.
\item If $|\cdot|_1$ and $|\cdot|_2$ are allowed $p$-norms on $\K^n$ and $\K^m$ respectively and $f:\K^n\lra \K^m$ is a rational map, then, for every rational number $\e>0$, the $p$-norm
$$
\|y\|= \inf\left\{ \left(	|x|_1^p+ (1+\e)^p|z|_2^p\right)^{1/p}: y=f(x)+z, x\in \K^n, g\in \K^m \right\}.
$$
is allowed on $\K^m$.
\end{enumerate}

These conditions are just the minimal set of requirements we need to make work some forthcoming tricks.
An allowed space is just the direct product of finitely many copies of the ground field furnished with an allowed $p$-norm.

Finally, declare a contractive  pair $
u: \xymatrix{
E  \ar@<0.3ex>[r]
& F \ar@<0.3ex>@{.>}[l] }
$
allowed if $E$ and $F$ are allowed $p$-normed spaces and both $u^\flat$ and $u^\sharp$ are rational maps.

Clearly, the allowed pairs form a countable category.

\subsection{Amalgamating pairs}
We now stablish that pairs have the so-called ``amalgamation property''. This just means that each diagram of pairs
$$
\xymatrix{
E  \ar@<0.3ex>[rr] \ar@<0.3ex>[d]
&  & F \ar@<0.3ex>@{.>}[ll]  \\
G  \ar@<0.3ex>@{.>}[u]
 }
$$
can be ``completed '' to a commutative diagram of pairs
$$
\xymatrix{
E \ar@<0.3ex>[rr] \ar@<0.3ex>[d]
&  & F \ar@<0.3ex>@{.>}[ll]  \ar@<0.3ex>[d] \\
G  \ar@<.3ex>@{.>}[u] \ar@<0.3ex>[rr]
&  &H \ar@<0.3ex>@{.>}[ll] \ar@<0.3ex>@{.>}[u] }
$$
This can be obtained in several ways. For our present purposes the most convenient one is to use the pull-back construction in the setting of $p$-Banach spaces, as described in the Appendix.

\begin{lemma}[Pull-back with pairs]\label{lem:PBwithpairs} Given pairs $
u: \xymatrix{
E  \ar@<0.3ex>[r]
& F \ar@<0.3ex>@{.>}[l] }
$
 and  $
v: \xymatrix{
E  \ar@<0.3ex>[r]
& G \ar@<0.3ex>@{.>}[l] }
$
there are pairs $\overline{u}=\langle \overline{u}^\flat, \overline{u}^\sharp\rangle$ and $\overline{v}=\langle \overline{v}^\flat, \overline{v}^\sharp\rangle$ such that the following diagram commutes
$$
\xymatrix{
E  \ar@<0.3ex>[rr]^{u^\flat} \ar@<0.3ex>[d]^{v^\flat}
&  & F \ar@<0.3ex>@{.>}[ll]^{u^\sharp}  \ar@<0.3ex>[d]^{\overline{v}^\flat} \\
G  \ar@<0.3ex>@{.>}[u]^{v^\sharp} \ar@<0.3ex>[rr]^{\overline{u}^\flat}
&  & H \ar@<0.3ex>@{.>}[ll]^{\overline{u}^\sharp} \ar@<0.3ex>@{.>}[u]^{\overline{v}^\sharp} }
$$
Moreover:
\begin{itemize}
\item If $u$ and $v$ are contractive, then so are $\overline{u}$ and $\overline{v}$.
\item If $u$ is contractive and $\|v^\flat\|\leq 1$, then $\overline{u}$ is contractive and $\|\overline{v}^\sharp\|\leq\| {v}^\sharp\| $.
\item If $u$ and $v$ are allowed pairs, then so are $\overline{u}$ and $\overline{v}$.
\end{itemize}
\end{lemma}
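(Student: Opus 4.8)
The plan is to realise $H$ as the pull-back of the two \emph{projections} $u^\sharp\colon F\to E$ and $v^\sharp\colon G\to E$, built in the category of $p$-Banach spaces as in the Appendix; concretely
$$
H=\{(f,g)\in F\times G: u^\sharp f=v^\sharp g\},
$$
equipped with its pull-back $p$-norm. I would take the projection parts $\overline{v}^\sharp,\overline{u}^\sharp$ to be the two coordinate projections $\pi_F\colon H\to F$ and $\pi_G\colon H\to G$, and I would define the embedding parts to be the canonical sections
$$
\overline{v}^\flat(f)=(f,v^\flat u^\sharp f),\qquad \overline{u}^\flat(g)=(u^\flat v^\sharp g,g),
$$
which land inside $H$ precisely because $u^\sharp u^\flat={\bf 1}_E$ and $v^\sharp v^\flat={\bf 1}_E$.

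The first thing to check is that $\overline{u}$ and $\overline{v}$ are genuine pairs and that the square commutes. That they are pairs is immediate: $\overline{v}^\sharp\overline{v}^\flat=\pi_F(f,v^\flat u^\sharp f)=f$, so $\overline{v}^\sharp\overline{v}^\flat={\bf 1}_F$, and likewise $\overline{u}^\sharp\overline{u}^\flat={\bf 1}_G$. The solid (embedding) square commutes because both $\overline{v}^\flat u^\flat$ and $\overline{u}^\flat v^\flat$ send $e\in E$ to $(u^\flat e,v^\flat e)$, using $u^\sharp u^\flat={\bf 1}_E$ and $v^\sharp v^\flat={\bf 1}_E$. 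The dotted (projection) square commutes for a tautological reason: for $(f,g)\in H$ one has $u^\sharp\overline{v}^\sharp(f,g)=u^\sharp f=v^\sharp g=v^\sharp\overline{u}^\sharp(f,g)$, which is exactly the defining relation of the pull-back.

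The substance of the lemma is in the three quantitative bullets, and these are governed entirely by the choice of $p$-norm on $H$. The projection $\overline{u}^\sharp=\pi_G$ is a contraction for the pull-back norm in all cases, while $\overline{v}^\sharp=\pi_F$ has norm at most $\|v^\sharp\|$ (equal to $1$ when $v$ is contractive). When $u$ and $v$ are both contractive the pull-back norm is the plain maximum norm inherited from $F\times G$, and then all four maps are contractions, since in addition $\|\overline{v}^\flat(f)\|=\max(\|f\|,\|v^\flat u^\sharp f\|)\le\|f\|$ and $\|\overline{u}^\flat(g)\|=\max(\|u^\flat v^\sharp g\|,\|g\|)\le\|g\|$; this is the first bullet. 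The third bullet then falls out of the Appendix pull-back lemma, because allowed pairs are contractive, the pull-back norm is an allowed product-and-restriction norm, and the ingredients of $\overline{u}^\flat,\overline{v}^\flat$ are rational.

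The main obstacle is the second bullet, where $v$ need not be contractive. With the naive maximum norm the section $\overline{u}^\flat$ picks up the modulus of $v^\sharp$ on the $F$-coordinate, so one only gets $\|\overline{u}^\flat\|\le\|v^\sharp\|$ and $\overline{u}$ fails to be contractive. The cure is to use exactly the pull-back norm that makes $\overline{u}^\sharp=\pi_G$ a metric surjection onto $G$; equivalently, to measure the $F$-coordinate at scale $\|v^\sharp\|^{-1}$, so that $\|(f,g)\|_H=\max\!\left(\|f\|/\|v^\sharp\|,\|g\|\right)$, which reduces to the maximum norm of the first bullet when $\|v^\sharp\|=1$. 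Since $u$ is contractive, $u^\flat$ is an isometry, hence $\|u^\flat v^\sharp g\|=\|v^\sharp g\|\le\|v^\sharp\|\,\|g\|$; this forces $\|\overline{u}^\flat(g)\|=\|g\|$, so $\overline{u}^\flat$ is an isometric section and $\overline{u}$ is contractive, while $\|\overline{v}^\sharp(f,g)\|=\|f\|\le\|v^\sharp\|\,\|(f,g)\|_H$ gives $\|\overline{v}^\sharp\|\le\|v^\sharp\|$. The delicate point worth isolating is that this single normalization of the pull-back delivers all three bullets for one and the same space $H$; once the norm is fixed, every inequality above is a one-line verification.
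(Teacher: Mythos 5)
Your construction coincides with the paper's: $H$ is the pull-back of the two projections $u^\sharp$ and $v^\sharp$, the new projection parts are the coordinate restrictions, and your explicit sections $\overline{u}^\flat(g)=(u^\flat v^\sharp g,g)$ and $\overline{v}^\flat(f)=(f,v^\flat u^\sharp f)$ are exactly the maps the paper produces from the universal property of the pull-back applied to the couples $({\bf 1}_G, u^\flat v^\sharp)$ and $({\bf 1}_F, v^\flat u^\sharp)$ (the paper gets commutativity of the solid square from the uniqueness clause of that universal property rather than by direct computation, but that difference is cosmetic). Where you genuinely diverge is the second bullet, and your treatment is in fact tighter than the paper's. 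The paper keeps the plain maximum norm on $H$ throughout and asserts that the first two bullets follow from the bounds $\|\overline{u}^\flat\|\leq\|u^\flat\|\|v^\sharp\|$ and $\|\overline{v}^\flat\|\leq\|v^\flat\|\|u^\sharp\|$; as you correctly point out, under the stated hypothesis ($u$ contractive, $\|v^\flat\|\leq 1$) these bounds only give $\|\overline{u}^\flat\|\leq\|v^\sharp\|$, so the max-norm argument proves the bullet only when its hypothesis is read as $\|v^\sharp\|\leq 1$ --- which is precisely the form in which the lemma is later invoked in the proof that $\mathscr K_p$ is of almost universal complemented disposition (``since $\|u_*^\sharp\|\leq 1$ the lower pair is contractive''), suggesting a $\flat$/$\sharp$ slip in the statement. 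Your rescaled norm $\|(f,g)\|_H=\max\left(\|f\|/\|v^\sharp\|,\|g\|\right)$ proves the bullet literally as written (indeed it needs no hypothesis on $v$ once $u$ is contractive), collapses to the max norm when $v$ is contractive or allowed, so the other two bullets and the later applications are untouched. Two minor caveats: the rescaling presupposes $v^\sharp\neq 0$, i.e.\ $E\neq 0$ (for the null pair one must fall back on the max norm; note that in that degenerate case the conclusion $\|\overline{v}^\sharp\|\leq\|v^\sharp\|=0$ is unattainable for any construction, so the bullet tacitly assumes $E\neq 0$ anyway), and you should also record the mixed identities $\overline{v}^\sharp\,\overline{u}^\flat=u^\flat v^\sharp$ and $\overline{u}^\sharp\,\overline{v}^\flat=v^\flat u^\sharp$, immediate from your formulas, since the paper lists them and actually uses one of them later.
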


\begin{proof}
The proof is based on the properties  the pull-back construction, as presented in the Appendix.

We start with the projections $u^\sharp $ and $v^\sharp$ and we set $H=\PB$, their pull-back space. In this way we obtain the commutative diagram
$$
\xymatrix{
E
&  & F \ar@<0.1ex>@{.>}[ll]^{u^\sharp}   \\
G  \ar@<0.1ex>@{.>}[u]^{v^\sharp}
&  & \PB \ar@<0.1ex>@{.>}[ll]^{\overline{u}^\sharp} \ar@<0.1ex>@{.>}[u]^{\overline{v}^\sharp} }
$$
The embedding $\overline{u}^\flat$ is provided by the universal property of the pull-back construction applied to the couple ${\bf 1}_G, u^\flat v^\sharp$:
$$
\xymatrix{
E
&  & F \ar@<0.1ex>@{.>}[ll]^{u^\sharp}   \\
G  \ar@<0.1ex>@{.>}[u]^{v^\sharp}
&  & \PB \ar@<0.1ex>@{.>}[ll]_{\overline{u}^\sharp} \ar@<0.3ex>@{.>}[u]^{\overline{v}^\sharp} \\
 & & & &G \ar@<0.0ex>@{.>}[ull]_{\overline{u}^\flat}  \ar@{.>}@/_/[uull]_{u^\flat v^\sharp}
 \ar@{:}@/^/[ullll]^{{\bf 1}_G}
}
$$
while  $\overline{v}^\flat$ is obtained from the couple ${\bf 1}_F, v^\flat u^\sharp$. We have
\begin{itemize}
\item $\| \overline{u}^\flat\|\leq \|u^\flat\|\| v^\sharp\|$
\item $\| \overline{v}^\flat\|\leq \|v^\flat\|\| u^\sharp\|$
\item $ \overline{u}^\sharp  \overline{u}^\flat={\bf 1}_G$, that is, $\overline{u}=\langle \overline{u}^\flat, \overline{u}^\sharp\rangle$ is a pair.
\item $ \overline{v}^\sharp  \overline{u}^\flat= {u}^\flat v^\sharp$
\item $ \overline{v}^\sharp  \overline{v}^\flat={\bf 1}_F$, that is, $\overline{v}=\langle \overline{v}^\flat, \overline{v}^\sharp\rangle$ is a pair.
\item $ \overline{u}^\sharp  \overline{v}^\flat= {v}^\flat  u^\sharp$
\end{itemize}
It only remains to see that the embeddings commute, that is: $\overline{v}^\flat  u^\flat= \overline{u}^\flat  v^\flat$. This can be deduced from the uniqueness part of the universal property of the pull-back construction. Indeed, regarding Diagram~\ref{diag:PB}, we have that since $u^\sharp  u^\flat= v^\sharp v^\flat$ (they are the identity on $E$) there must be a unique operator $\gamma:E\lra \PB$ making commutative the diagram
$$
\xymatrix{
E
&  & F \ar@<0.1ex>@{.>}[ll]^{u^\sharp}   \\
G  \ar@<0.1ex>@{.>}[u]^{v^\sharp}
&  & \PB \ar@<0.1ex>@{.>}[ll]^{\overline{u}^\sharp} \ar@<0.3ex>@{.>}[u]^{\overline{v}^\sharp} \\
 & & & &E \ar@<0.0ex>@{.>}[ull]_{\gamma}  \ar@{.>}@/_/[uull]_{u^\flat}
 \ar@{.>}@/^/[ullll]^{v^\flat}
}
$$
But since both $\overline{v}^\flat  u^\flat$ and $\overline{u}^\flat  v^\flat$
do the trick we conclude that they agree.

This also proves the first two ``moreover'' statements. The third one follows from the final remark in the Appendix, after representing $\PB$ as an allowed space.
\end{proof}

\subsection{Pairs with allowed domain}

\begin{lemma}[Density of allowed pairs]\label{lem:density}
Given a contractive pair $
u: \xymatrix{
U  \ar@<0.3ex>[r]
& F \ar@<0.3ex>@{.>}[l] }
$, with allowed domain $U$ and $\varepsilon>0$ there is an allowed pair $
u_0: \xymatrix{
U  \ar@<0.3ex>[r]
& F_0 \ar@<0.3ex>@{.>}[l] }
$
and a $\varepsilon$-isometry $g:F\lra F_0$ making commutative the diagram
$$
\xymatrixcolsep{2pc}\xymatrixrowsep{1.5pc}\xymatrix{& & &  F  \ar@<0.3ex>@{.>}[llld]^{u^\sharp} \ar@<0.3ex>[dd]^{g} \\
U  \ar@<0.3ex>[rrru]^{u^\flat} \ar@<0.3ex>[rrrd]^{u_0^\flat}  \\
&&  & F_0  \ar@<0.3ex>@{.>}[lllu]^{u_0^\sharp} \ar@<0.3ex>@{.>}[uu]^{g^{-1}}
 }
$$
\end{lemma}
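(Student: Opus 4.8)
The plan is to reduce $u$ to a pair between coordinate spaces with \emph{standard} embedding and projection, and then to build $F_0$ by a single application of clause (4) in the definition of allowed $p$-norms, feeding it a rational approximation of the codomain norm. First I would exploit that $u$ is contractive: from $u^\sharp u^\flat={\bf 1}_U$ it follows that $u^\flat$ is an isometry, that $\|u^\sharp\|\le 1$, and that $u^\flat u^\sharp$ is a projection of $F$ onto $u^\flat[U]$ along $\ker u^\sharp$, so $F=u^\flat[U]\oplus\ker u^\sharp$. Writing $k=\dim U$ and $d=\dim\ker u^\sharp$ and fixing any linear isomorphism $\beta:\ker u^\sharp\to\K^d$, I would define $\theta:F\to\K^{k+d}$ by $\theta(u^\flat(x)+w)=(x,\beta(w))$. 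Transporting the norm of $F$ through $\theta$ yields a $p$-norm $\|\cdot\|'$ on $\K^{k+d}$ for which $\theta$ is a surjective isometry, $\theta\circ u^\flat$ is the standard inclusion $\iota:\K^k\to\K^{k+d}$ (so that $\|\cdot\|'$ restricts to the allowed norm of $U$ on $\K^k\times 0$), and the standard projection $\pi$ satisfies $\pi\circ\theta=u^\sharp$. After this harmless change of coordinates I may assume $u^\flat=\iota$ and $u^\sharp=\pi$, both of which are rational.

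Next I would approximate. Choose a rational $p$-norm $|\cdot|_2$ on $\K^{k+d}$ that is an $\eta'$-isometry for $\|\cdot\|'$ (possible because any $p$-convex body is squeezed arbitrarily well between $p$-convex hulls of finite rational sets) together with a rational $\eta>\eta'$. Applying clause (4) with domain norm the given allowed $\|\cdot\|_U$ on $\K^k$, codomain norm $|\cdot|_2$, rational map $\iota$ and parameter $\eta$ produces the allowed $p$-norm
$$
|y|_0=\inf\left\{\left(\|x\|_U^p+(1+\eta)^p|z|_2^p\right)^{1/p}: y=\iota(x)+z,\ x\in\K^k,\ z\in\K^{k+d}\right\}
$$
on $\K^{k+d}$. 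Taking $F_0=(\K^{k+d},|\cdot|_0)$ and $u_0=\langle\iota,\pi\rangle$, the maps $\iota,\pi$ are rational and $U$ is allowed, so it only remains to check that $u_0$ is contractive and that the identity $g_0:(\K^{k+d},\|\cdot\|')\to F_0$ is close to an isometry.

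These verifications are the computation of Lemma~\ref{lem:bound} run with $|\cdot|_2$ in place of the exact codomain norm. For $w\in\K^k$ one has $|\iota(w)|_2\ge(1+\eta')^{-1}\|w\|_U$, so since $\eta>\eta'$ the infimum defining $|\iota(x_0)|_0$ is at least $\inf_x(\|x\|_U^p+\|x_0-x\|_U^p)^{1/p}=\|x_0\|_U$, while the decomposition $z=0$ gives the reverse bound; hence $\iota$ is $|\cdot|_0$-isometric with the prescribed range norm. Contractivity of $\pi$ follows from $\|\pi(\iota(x)+z)\|_U\le(\|x\|_U^p+\|\pi z\|_U^p)^{1/p}$ together with $\|\pi z\|_U\le\|z\|'\le(1+\eta')|z|_2\le(1+\eta)|z|_2$, after taking the infimum over decompositions. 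For the closeness of $g_0$, the unit ball of $|\cdot|_0$ is the $p$-convex hull of $\iota[B_U]\cup(1+\eta)^{-1}B_{|\cdot|_2}$, and since $\iota[B_U]\subseteq B_{\|\cdot\|'}$ and $(1+\eta)^{-1}B_{|\cdot|_2}\subseteq(1+\eta)^{-1}(1+\eta')B_{\|\cdot\|'}\subseteq B_{\|\cdot\|'}$ one obtains $\|y\|'\le|y|_0\le(1+\eta)(1+\eta')\|y\|'$. Thus, choosing $\eta,\eta'$ so small that $(1+\eta)(1+\eta')\le 1+\varepsilon$, the map $g=g_0\circ\theta:F\to F_0$ is an $\varepsilon$-isometry; moreover $g\circ u^\flat=\iota=u_0^\flat$ and $u_0^\sharp\circ g=\pi\circ\theta=u^\sharp$, which are precisely the two commutativity relations of the diagram.

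The only genuinely delicate point, and the one I expect to be the main obstacle, is to meet all three demands at once: the embedded copy of $U$ must remain \emph{exactly} isometric with its prescribed allowed norm, the projection must remain contractive, and the ambient norm must stay within $\varepsilon$ of the original, all while clause (4) can only be fed a rational, hence merely approximate, codomain norm $|\cdot|_2$. The device that makes every estimate close is to inflate the clause-(4) parameter $\eta$ slightly past the approximation error $\eta'$: the factor $(1+\eta)$ then absorbs each $(1+\eta')$ discrepancy, and on the ball side it collapses the relevant $p$-convex hull back into $B_{\|\cdot\|'}$.
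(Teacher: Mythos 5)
Your proof is correct and follows essentially the same route as the paper's: change coordinates along the decomposition $u^\flat[U]\oplus\ker u^\sharp$ so that the pair becomes the standard (rational) inclusion and projection, approximate the transported norm by a rational $p$-norm, and then apply the clause-(4) renorming (i.e.\ the construction of Lemma~\ref{lem:bound}) to restore exact contractivity of the pair while keeping the identity map close to an isometry. The only difference is bookkeeping: the paper uses a single rational parameter $\e$ for both the rational approximation and the renorming and simply invokes Lemma~\ref{lem:bound}, whereas you split the roles into $\eta'<\eta$ and re-run the estimates with mixed parameters; both arguments end with the same kind of quadratic distortion, $(1+\e)^2$ versus $(1+\eta)(1+\eta')$, absorbed into the given $\varepsilon$.
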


\begin{proof}
We may assume that $\e$ is rational.
Let $(e_i)_{1\leq i\leq n}$ be the unit basis of $U=\K^n$ and let $(f_j)_{1\leq j\leq m}$ be a basis of $\ker u^\sharp$. Then the set $\{u^\flat(e_1),\dots, u^\flat(e_n), f_1, \dots, f_m\}$ is a basis of $F$ and can be used to define an isomorphism $g:F\lra \K^{n+m}$. Take a rational $p$-norm $|\cdot|_0$ on $\K^{n+m}$ making $g$ an $\e$-isometry:
$$
(1+\e)^{-1}\|y\|	\leq |g(y)|_0\leq (1+\e)\|y\|\quad\quad(y\in F).
$$
Consider the pair $u_0=\langle g,g^{-1}\rangle\circ u$. Then $u_0$ is rational (actually: $u^\flat(x)=(x,0)$ and $u^\sharp(x,y)=x$) and
$$
\left\| u_0: \xymatrix{
U  \ar@<0.3ex>[r]
& (\K^{n+m}, |\cdot|_0) \ar@<0.3ex>@{.>}[l] }\right\|\leq 1+\e.
$$
Finally, we define a new $p$-norm on $\K^{n+m}$ by the formula
$$
|y|= \inf\left\{ \big{(}	\|x\|^p+ (1+\e)^p|z|_0^p \big{)}^{1/p}: y=u_0^\flat(x)+z, x\in \K^n, z\in \K^{n+m} \right\}.
$$
This $p$-norm has to be allowed on $\K^{n+m}$ (by the the fourth condition of the list),
satisfies the estimate  $$
(1+\e)^{-1} |y|_0	\leq |y|\leq (1+\e)|y|_0\quad\quad(y\in \K^{n+m})
$$
 and makes $u_0$ into a contractive  pair (see the proof of Lemma~\ref{lem:bound}) which is therefore allowed. Hence, if $F_0$ is  $\K^{n+m}$ equipped with $|\cdot|$ we have
$$
(1+\e)^{-2}\|y\|	\leq |g(y)|\leq (1+\e)^2\|y\|\quad\quad(y\in F),
$$
which ends the proof.
\end{proof}

\subsection{A Fra\"\i ss\'e sequence of allowed pairs.} The category of allowed pairs is countable and, by Lemma~\ref{lem:PBwithpairs}, it admits amalgamations. It follows from general results that  it  has a Fra\"\i ss\'e sequence. We are going to construct such an object ``by hand'' as follows.

\begin{proposition} There is a sequence of allowed pairs $u_n: \xymatrix{
U_n  \ar@<0.3ex>[r]
& U_{n+1} \ar@<0.3ex>@{.>}[l] }
$
having the following property:
For every allowed pair $
v: \xymatrix{
U_n  \ar@<0.3ex>[r]
& F \ar@<0.3ex>@{.>}[l] }
$ there is $m>n$ and an allowed pair  $
u: \xymatrix{
F  \ar@<0.3ex>[r]
& U_m \ar@<0.3ex>@{.>}[l] }
$
such that $u\circ v$ is the bonding pair $
 \xymatrix{
U_n  \ar@<0.3ex>[r]
& U_m \ar@<0.3ex>@{.>}[l] }
$.
\end{proposition}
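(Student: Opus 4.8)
The plan is to run the standard recursive construction of a Fra\"\i ss\'e sequence, using the amalgamation supplied by Lemma~\ref{lem:PBwithpairs} as the only combinatorial engine and a dovetailing of requirements to guarantee that every extension problem is eventually solved. The first thing I would fix is the bookkeeping. Since the category of allowed pairs is countable, once a concrete allowed space $V$ is given the collection of allowed pairs with domain $V$ is countable and may be enumerated. I choose a surjection $k\longmapsto(n_k,j_k)$ of $\N$ onto $\N\times\N$ subject to $n_k\leq k$ for all $k$; this constraint is what lets each requirement surface at a stage at which the space it concerns has already been built.

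Next I would build the sequence by recursion, starting from $U_0=0$. Assume $U_0,u_0,\dots,U_k$ have been defined, all allowed and contractive. Let $w:U_{n_k}\to U_k$ be the bonding pair $u_{k-1}\circ\cdots\circ u_{n_k}$ (the identity pair when $n_k=k$); being a composition of allowed contractive pairs, $w$ is again allowed and contractive, since the relevant maps are composites of rational maps and the operator bounds stay below $1$. Enumerating the allowed pairs with domain $U_{n_k}$, let $v:U_{n_k}\to F$ be the $j_k$-th of them. I now apply Lemma~\ref{lem:PBwithpairs} to $v$ and $w$, which share the domain $U_{n_k}$: it produces an allowed space $H$ together with allowed pairs $a:U_k\to H$ and $b:F\to H$ for which the amalgamation square commutes, both on embeddings and, because $H$ is built as the relevant pull-back, on projections; hence $a\circ v$-side equals $b\circ$-side, i.e. $a\circ w=b\circ v$ as pairs. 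I then set $U_{k+1}=H$ and $u_k=a$.

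Reading off the asserted property is then immediate. Given $n$ and an allowed pair $v:U_n\to F$, the pair $v$ is the $j$-th allowed pair with domain $U_n$ for some $j$, and by surjectivity of the bookkeeping there is a stage $k$ with $(n_k,j_k)=(n,j)$; then $k\geq n_k=n$. At that stage the recursion amalgamated precisely $v$ with the bonding pair $w:U_n\to U_k$, yielding $u_k=a$ and $b:F\to U_{k+1}$ with $b\circ v=a\circ w$. Since the bonding pair from $U_n$ to $U_{k+1}$ is $u_k\circ w=a\circ w$, taking $m=k+1>n$ and $u=b$ gives that $u\circ v$ is exactly that bonding pair, which is what is required.

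The routine facts to keep straight are that allowedness and contractivity survive composition and the amalgamation of Lemma~\ref{lem:PBwithpairs}, so every object and bonding map stays inside the countable category. The only genuinely delicate point is the bookkeeping itself: because the spaces $U_n$ are produced along the way, one cannot list all requirements in advance, and the purpose of the condition $n_k\leq k$ together with surjectivity of $k\mapsto(n_k,j_k)$ is exactly to ensure that each requirement $(n,v)$ is visited at a stage where $U_n$ already exists and is therefore honoured.
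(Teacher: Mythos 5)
Your proposal is correct and follows essentially the same route as the paper: a recursive construction in which, at each stage, the bonding pair is amalgamated with a scheduled allowed pair via the pull-back Lemma~\ref{lem:PBwithpairs}, with allowedness and contractivity preserved throughout. The only difference is cosmetic bookkeeping: the paper enumerates couples (pair, integer) with infinite repetition and inserts trivial identity steps when the scheduled couple is irrelevant, whereas you schedule requirements $(n_k,j_k)$ with $n_k\leq k$ against per-space enumerations so that every stage performs a genuine amalgamation; both devices serve the same purpose of ensuring each extension problem over each $U_n$ is eventually treated.
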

The commutative diagram of pairs
$$
\xymatrix{
U_0  \ar@<0.3ex>[r]
& U_1 \ar@<0.3ex>[r] \ar@<0.3ex>@{.>}[l] &... \ar@<0.3ex>[r] \ar@<0.3ex>@{.>}[l]
& U_n \ar@<0.3ex>@{.>}[l] \ar@<0.3ex>[r] \ar@<0.3ex>[rd] \ar@<0.3ex>@{.>}[l] &... \ar@<0.3ex>[r] \ar@<0.3ex>@{.>}[l]
& U_m \ar@<0.3ex>@{.>}[l] \ar@<0.3ex>@{.>}[ld] \ar@<0.3ex>[r] \ar@<0.3ex>@{.>}[l] &...  \ar@<0.3ex>@{.>}[l] \\
& & & & F \ar@<0.3ex>@{.>}[ul]  \ar@<0.3ex>[ur]
}
$$
illustrates the relevant property of the sequence we want to construct.


\begin{proof}
As there are only countable many allowed pairs, we can take a sequence $(f_n,k_n)$ passing through all couples of the form $(f,k)$, where $f$ is an  allowed pair and $k\in\mathbb N$, in such a way that each $(f,k)$ appears infinitely many times. For instance, if $(g_n)$ is an enumeration of the allowed pairs we can take
$$\begin{array}{l}
(g_1,1);\\
(g_1,1); (g_1,2); (g_2,1); \\
(g_1,1); (g_1,2); (g_2,1); (g_1, 3); (g_2,2); (g_3,1);\\
\dots
\end{array}
$$
The sequence $(u_n)$ is constructed by induction. The initial pair is the obvious one $
u_0: \xymatrix{0  \ar@<0.3ex>[r]
& \mathbb K \ar@<0.3ex>@{.>}[l] }
$. Having defined $
u_{n-1}: \xymatrix{
U_{n-1}  \ar@<0.3ex>[r]
& U_n \ar@<0.3ex>@{.>}[l] }
$
we take a look at $(f_n, k_n)$, which consists of a ``number of control'' $k_n$ and a pair $
f_n: \xymatrix{
E_n  \ar@<0.3ex>[r]
& F_n \ar@<0.3ex>@{.>}[l] }
$.
If either $k_n\geq n$ or the ``domain'' of $f_n$ (the space $E_n$) is not $U_{k_n}$, then we set $U_n=U_{n-1}$ and $u_n$ is the identity of $U_{n-1}$.

Otherwise we have $k_n<n$ and the domain  $f_n$  is  $U_{k_n}$. Thus we have two pairs with domain $E_n=U_{k_n}$, namely the bonding morphism $\kappa : \xymatrix{
U_{k_n}  \ar@<0.3ex>[r]
& U_n \ar@<0.3ex>@{.>}[l] }
$
and $f_n$ itself. Thanks to Lemma~\ref{lem:PBwithpairs} these fit into a commutative diagram of allowed pairs
$$
\xymatrix{
U_{k_n}=E_n  \ar@<0.3ex>[rr]^{f_n^\flat} \ar@<0.3ex>[d]
&  & F_n \ar@<0.3ex>@{.>}[ll]  \ar@<0.3ex>[d]^{\overline{\kappa}} \\
U_n  \ar@<0.3ex>@{.>}[u]^{\kappa} \ar@<0.3ex>[rr]^{\overline{f}_n}
&  & \PB \ar@<0.3ex>@{.>}[ll] \ar@<0.3ex>@{.>}[u] }
$$
Then set $U_{n+1}=\PB$ and $u_n=\overline{f}_n$. This completes the induction step.

Let us check that the resulting sequence $(u_n)_{n\geq 0}$ has the required property.
Fix $n\in \mathbb N$ and let $v: \xymatrix{
U_{n}  \ar@<0.3ex>[r]
& F \ar@<0.3ex>@{.>}[l] }
$ be an allowed pair. Take $m>n$ such that $(f_m,k_m)=(v,n)$. Then the $(m-1)$-th pair of the sequence $(u_n)_{n\geq 0}$ arises from the pull-back diagram
$$
\xymatrix{
U_{k_m}=U_n=E_m  \ar@<0.3ex>[rr]^{v} \ar@<0.3ex>[d]
&  & F \ar@<0.3ex>@{.>}[ll]  \ar@<0.3ex>[d]^{\overline{\kappa}} \\
U_{m-1}  \ar@<0.3ex>@{.>}[u]^{\kappa} \ar@<0.3ex>[rr]^{\overline{v}=u_{m-1}}
&  & \PB=U_m \ar@<0.3ex>@{.>}[ll] \ar@<0.3ex>@{.>}[u] }
$$
and so $\overline{\kappa}\circ v$ is the bonding pair $\xymatrix{
U_{n}  \ar@<0.3ex>[r]
& U_m \ar@<0.3ex>@{.>}[l] }
$.
\end{proof}

\subsection{A space of almost universal complemented disposition}
Let $\mathscr{K}_p$ be the direct limit of the inductive system
underlying $(u_n)$:
$$
\begin{CD}
0 @>>> \mathbb K @>{u_1^\flat}>>  U_2 @>>> \cdots  @>>> U_{n} @>{u_n^\flat}>> U_{n+1}@>>>\cdots
\end{CD}
$$
\begin{theorem}
The space $\mathscr{K}_p$ is of almost universal complemented disposition.
\end{theorem}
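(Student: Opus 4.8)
The plan is to run the standard Fra\"\i ss\'e ``one-step extension'' argument, using the previous Proposition as its engine, after first recording the canonical structure of the limit. For each $n$ let $\iota_n\colon U_n\lra\mathscr K_p$ be the canonical embedding into the direct limit; composing the projection parts $u_k^\sharp$ one obtains compatible contractive projections $\pi_n\colon\mathscr K_p\lra U_n$ with $\pi_n\iota_n={\bf 1}_{U_n}$, so that $\langle\iota_n,\pi_n\rangle$ is a contractive pair with domain $U_n$ and codomain $\mathscr K_p$. Moreover $\iota_n\pi_n\to{\bf 1}_{\mathscr K_p}$ in the strong operator topology: each $\iota_n\pi_n$ is contractive and fixes $\iota_k[U_k]$ whenever $k\le n$, so $\iota_n\pi_n x\to x$ for $x$ in the dense subspace $\bigcup_k\iota_k[U_k]$, hence everywhere. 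Finally, these pairs are compatible with the bonding pairs of the sequence: if $\beta\colon U_n\lra U_m$ denotes the bonding pair for $m>n$, then $\langle\iota_m,\pi_m\rangle\circ\beta=\langle\iota_n,\pi_n\rangle$. This last identity is what will let me read off the solution at the end.

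Now fix the data of Definition~\ref{def:aucd}: contractive pairs $u\colon E\lra\mathscr K_p$ and $v\colon E\lra F$ with $F$ finite dimensional, and $\e>0$. The first and most delicate step is to transport all of this into the countable category of allowed pairs while losing only factors $(1+\e)$. Note that in a contractive pair the embedding part is automatically isometric, since $\|x\|=\|u^\sharp u^\flat x\|\le\|u^\flat x\|\le\|x\|$. Because $\iota_n\pi_n\to{\bf 1}$ strongly, for $n$ large the operator $\iota_n\pi_n$ displaces the finitely many normalized vectors of $u^\flat[E]$ as little as we please; Lemma~\ref{lem:small} then yields an automorphism of $\mathscr K_p$ arbitrarily close to the identity carrying $u^\flat[E]$ into $\iota_n[U_n]$, and a further such automorphism moves it onto a \emph{rational} subspace, which is allowed by the second clause of the list of allowed $p$-norms. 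After these $(1+\e)$-small corrections, and after adjusting the norm via Lemma~\ref{lem:bound}, we may assume $u=\langle\iota_n,\pi_n\rangle\circ u'$ for an honest allowed pair $u'\colon E\lra U_n$ with allowed domain $E$. Finally, Lemma~\ref{lem:density} replaces $v$ by an allowed pair $v_0\colon E\lra F_0$ together with an $\e$-isometry $g\colon F\lra F_0$ satisfying $v_0=\langle g,g^{-1}\rangle\circ v$; it then suffices to solve the problem for $v_0$ and compose with $\langle g,g^{-1}\rangle$ at the end.

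Once inside the allowed category the remainder is essentially formal. We have two allowed pairs $u'\colon E\lra U_n$ and $v_0\colon E\lra F_0$ sharing the allowed domain $E$, so Lemma~\ref{lem:PBwithpairs} (amalgamation) completes them to a commutative square of allowed pairs, producing an allowed space $H$ and pairs $\overline{v_0}\colon U_n\lra H$ and $\overline{u'}\colon F_0\lra H$ with $\overline{u'}\circ v_0=\overline{v_0}\circ u'$ (embeddings and projections commuting separately). Now $\overline{v_0}$ is an allowed pair out of $U_n$, so the previous Proposition furnishes $m>n$ and an allowed pair $t\colon H\lra U_m$ with $t\circ\overline{v_0}=\beta$, the bonding pair $U_n\lra U_m$. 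Setting $w_0=\langle\iota_m,\pi_m\rangle\circ t\circ\overline{u'}\colon F_0\lra\mathscr K_p$ and $w=w_0\circ\langle g,g^{-1}\rangle\colon F\lra\mathscr K_p$, commutativity of the square gives
\[
w_0\circ v_0=\langle\iota_m,\pi_m\rangle\circ t\circ\overline{u'}\circ v_0=\langle\iota_m,\pi_m\rangle\circ t\circ\overline{v_0}\circ u'=\langle\iota_m,\pi_m\rangle\circ\beta\circ u'=\langle\iota_n,\pi_n\rangle\circ u'=u,
\]
whence $w\circ v=w_0\circ\langle g,g^{-1}\rangle\circ v=w_0\circ v_0=u$. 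Every pair entering $w_0$ is contractive, so $\|w_0\|\le1$ and $\|w\|\le\max(\|g\|,\|g^{-1}\|)\le1+\e$; folding in the finitely many $(1+\e)$-perturbations of the reduction step only replaces this by a fixed power of $1+\e$, which is harmless after renaming $\e$.

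The heart of the matter, and the step I expect to fight with, is the reduction of the second paragraph: the given pair $u$ lives in the genuine, non-rational world and $F$ is an arbitrary finite-dimensional $p$-normed space, whereas the Fra\"\i ss\'e machinery speaks only about the countable category of allowed pairs. Making the passage honest---realizing an $\e$-isometric, rational, $1$-complemented copy of $E$ inside a finite stage $U_n$ and simultaneously presenting $u$ as an \emph{exact} composite $\langle\iota_n,\pi_n\rangle\circ u'$ with controlled norm---is precisely what Lemmas~\ref{lem:small}, \ref{lem:bound} and \ref{lem:density} were designed to accomplish. Once this is in place, amalgamation and the absorption property of the sequence close the argument with no further analysis.
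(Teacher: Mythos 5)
Your toolbox and your final assembly are the right ones, but the reduction in your second paragraph --- the step you yourself flag as the one to fight with --- is a genuine gap, not a removable technicality, because it asks for something the paper's machinery cannot deliver. You need $u'\colon E\lra U_n$ to be an \emph{allowed} pair: allowed domain, \emph{both} components rational maps, and contractive. Small automorphisms of $\mathscr K_p$ (Lemma~\ref{lem:small}) can indeed move $u^\flat[E]$ onto a rational subspace of some $U_n$, and pulling back the norm along a rational parametrization makes the embedding part acceptable; but the projection part $u'^\sharp\colon U_n\lra E$ descends from the given, completely arbitrary contractive projection $u^\sharp\colon\mathscr K_p\lra E$, and no composition with automorphisms or re-coordinatizations of $E$ makes it a rational map --- you would have to perturb it, and any perturbation destroys the exact identity $u=\langle\iota_n,\pi_n\rangle\circ u'$ on which your closing chain of equalities (hence the exact equality $w\circ v=u$ demanded by Definition~\ref{def:aucd}) rests. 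Moreover, allowed pairs are contractive by definition, while your $u'$ carries the accumulated $(1+\e)$-factors; the only device for restoring contractivity, Lemma~\ref{lem:bound}, works by renorming the \emph{codomain} of the pair, and here the codomain is $U_n$, whose $p$-norm is frozen as part of the Fra\"{\i}ss\'e sequence. For the same reason Lemma~\ref{lem:density} cannot rescue you: it allowed-izes a pair precisely by \emph{replacing its codomain}, so it can never produce an allowed pair whose codomain is a prescribed $U_n$. (There is the further point that $u^\sharp$ need not vanish on $\ker\pi_n$, so $u$ does not factor exactly through $\langle\iota_n,\pi_n\rangle$ at the level of projections in the first place.)

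The paper's proof is organized to avoid exactly this trap, by reversing your order of operations: amalgamate first, allowed-ize second. It pushes $u$ into $U_n$ to get a pair $u_*\colon E\lra U_n$ that is neither allowed nor even contractive (only $\|u_*^\sharp\|\leq 1$ matters), amalgamates $u_*$ with $v$ over $E$ by the pull-back Lemma~\ref{lem:PBwithpairs} --- which works for arbitrary pairs --- and only then applies Lemma~\ref{lem:density}, namely to the pair $\overline v\colon U_n\lra\PB$ produced by the amalgamation. That pair is in exactly the configuration the lemma can handle: it is contractive (because $v$ is and $\|u_*^\sharp\|\leq1$, by the norm estimates of Lemma~\ref{lem:PBwithpairs}), its domain $U_n$ is already allowed, and its codomain $\PB$ is disposable, so it may be replaced by an allowed $F_0$ via an invertible $\e$-isometry $g$ that is undone at the end. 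The Fra\"{\i}ss\'e property is then invoked for $\overline v_0=\langle g,g^{-1}\rangle\circ\overline v$, while the non-allowed, non-contractive pair $\overline u_*$ is merely carried along and folded into the final bound $\|w\|\leq(1+\e)^3$. So the repair is not more work inside your scheme but a reordering of it: perform the amalgamation in the plain category of pairs, and reserve the passage to the countable allowed category for the single pair whose domain is $U_n$ and whose codomain you are free to replace.
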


\begin{proof}
Let $u: \xymatrix{
E \ar@<0.3ex>[r]
& \mathscr K_p \ar@<0.3ex>@{.>}[l] }
$ and $v: \xymatrix{
E \ar@<0.3ex>[r]
& F \ar@<0.3ex>@{.>}[l] }
$
be contractive pairs, where $F$ is a finite dimensional $p$-normed space
and $0<\e<1$.

Our immediate aim is to push $u$ into some $U_n$ at the cost of spoiling the isometric character of the embedding and the bound of the projection.

To this end note that since the union of the subspaces $U_n$ is dense in $\mathscr K_p$ a straighforward application of Lemma~\ref{lem:small} provides an integer $n$ and an automorphism $f$ of $\mathscr K_p$ such that $f[u^\flat[E]]\subset U_n$ with $\|f-{\bf 1}_{\mathscr K_p}\|<\e$ and $\max\left(\|f\|,\|f^{-1}\|\right)<1+\e$.

After mulpliplying $f$ by $\|f^{-1}\|$ and dividing $f^{-1}$ by $\|f^{-1}\|$ we may assume and do that $\|f^{-1}\|=1$, with $\|f\|<(1+\e)^2.$

Then $\langle f,f^{-1}\rangle\circ u$ is a pair from $E$ to $\mathscr K_p$ that
``factors'' through the natural pair $\imath_n: \xymatrix{
U_n \ar@<0.3ex>[r]
& \mathscr K_p \ar@<0.3ex>@{.>}[l] }
$
in the sense that  $\langle f,f^{-1}\rangle\circ u=\imath_n\circ u_*$, where
 $u_*: \xymatrix{
E \ar@<0.3ex>[r]
& U_n \ar@<0.3ex>@{.>}[l] }
$
is defined as
$$
u_*^\flat=\imath_n^\sharp f  u^\flat, \quad\quad u_*^\sharp=u^\sharp f^{-1} \imath_n^\flat.
$$
Indeed $u_*^\sharp$ is a projection along $u_*^\flat$ since
$
u_*^\sharp u_*^\flat= u^\sharp  f^{-1} \imath_n^\flat  \imath_n^\sharp f u^\flat= {\bf 1}_E
$.
Now we work with this $u_*$ and return to $u$ at the end of the proof.

Let us amalgamate $u_*$ and $v$ in the pull-back diagram
$$
\xymatrix{
E  \ar@<0.3ex>[rr]^{v^\flat} \ar@<0.3ex>[d]^{u_*^\flat}
&  & F \ar@<0.3ex>@{.>}[ll]^{v^\sharp}  \ar@<0.3ex>[d]^{\overline{{u}}_*^\flat} \\
U_n  \ar@<0.3ex>@{.>}[u]^{u_*^\sharp} \ar@<0.3ex>[rr]^{\overline{v}^\flat}
&  & \PB \ar@<0.3ex>@{.>}[ll]^{\overline{v}^\sharp} \ar@<0.3ex>@{.>}[u]^{\overline{u}_*^\sharp}
}
$$
Note that since $\|u_*^\sharp\|\leq 1$ the lower pair $\overline{v}=\langle \overline{v}^\flat,  \overline{v}^\sharp\rangle $ is contractive.
Then we apply Lemma~\ref{lem:density} to $\overline{v}$ to obtain an allowed space $F_0$   together with a  $\e$-isometry $g:\PB\lra F_0$ such that $\overline{v}_0= \langle g,g^{-1}\rangle\circ\overline{v}$ is an allowed pair. Finally, by the very definition of the sequence $(u_n)_{n\geq 0}$ there is $m>n$ and an allowed pair  $w_0 : \xymatrix{
F_0 \ar@<0.3ex>[r]
& U_m \ar@<0.3ex>@{.>}[l] }
$ such that $w_0\circ\overline{v}_0$ is the bonding pair $ \xymatrix{
U_n \ar@<0.3ex>[r]
& U_m \ar@<0.3ex>@{.>}[l] }
$, so we have the following commutative diagram of pairs:
$$
\xymatrix{
E  \ar@<0.3ex>[rr]^{v^\flat} \ar@<0.3ex>[d]^{{u}_*^\flat}
&  & F \ar@<0.3ex>@{.>}[ll]^{v^\sharp}  \ar@<0.3ex>[d]^{\overline{{u}}_*^\flat} \\
U_n \ar@<0.3ex>[d] \ar@<0.3ex>@{.>}[u] \ar@<0.3ex>[rr]^{\overline{v}^\flat}
&  & \PB \ar@<0.3ex>@{.>}[ll]^{\overline{v}^\sharp} \ar@<0.3ex>@{.>}[u]  \ar@<0.3ex>[d]^{g\quad\quad} \\
U_m \ar@<0.3ex>@{.>}[u]^{{}^\text{bonding }_\text{\quad pair}} \ar@<-0.3ex>@{.>}[rr]_{w_0^\sharp}  & & F_0 \ar@<0.3ex>@{.>}[u]^{g^{-1}} \ar@<-0.3ex>[ll]_{{w}_0^\flat}
}
$$
In particular one has
$$
w_0\circ \langle g,g^{-1}\rangle \circ\overline{u}\circ v= u_*= \langle f,f^{-1}\rangle\circ u,
$$
and letting $w= \langle f^{-1},f\rangle\circ w_0\circ \langle g,g^{-1}\rangle \circ\overline{u}_*$ we are done since $w\circ v= u$ and
$$
\|w\|\leq  \|\langle f^{-1},f\rangle\|\| w_0\|\| \langle g,g^{-1}\rangle \|\|\overline{u}_*\|\leq (1+\e)^3< 1+4\e.
\vspace{-16pt}$$
\end{proof}

\section{Further properties of $\mathscr{K}_p$}

In this Section we study a number of isometric properties of $\mathscr{K}_p$. In general these upgrade well-known isomorphic properties of Kadec/Pe\l czy\'nski/Wojtaszczyk and Kalton spaces.

The key fact is the following result that allows us to recover  ``approximate pairs'' (couples of operators  $f^\dagger: X\lra Y$ and $f^\ddagger: Y\lra X$ whose composition is close to the identity of $X$) as composition of the arrows of two pairs with a common, \emph{ad hoc} codomain.

\begin{lemma}\label{lem:W}
Let  $f^\dagger: X\lra U$ and $f^\ddagger: U\lra X$ be contractive operators such that $\|f^\ddagger f^\dagger-{\bf 1}_X\|\leq \e$.  Then there is a $p$-Banach space $W$ and contractive  pairs
 $\alpha: \xymatrix{
X  \ar@<0.3ex>[r]
& W \ar@<0.3ex>@{.>}[l] }
$
and
 $\beta: \xymatrix{
U \ar@<0.3ex>[r]
& W \ar@<0.3ex>@{.>}[l] }
$
such that $f^\dagger= \beta^\sharp \alpha^\flat, f^\ddagger= \alpha^\sharp \beta^\flat$ and $\|\alpha^\flat-\beta^\flat f^\dagger\|\leq\e$.
\end{lemma}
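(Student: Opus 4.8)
The plan is to build $W$ as the simplest possible common codomain, namely the $\ell_\infty$-type direct product $W=X\times U$ equipped with the $p$-norm $\|(x,u)\|_W=\max(\|x\|,\|u\|)$. This is genuinely a $p$-norm (the maximum of two $p$-norms is $p$-subadditive, since $\max(a_1+a_2,b_1+b_2)\leq \max(a_1,b_1)+\max(a_2,b_2)$ applied to the $p$-th powers of the coordinate norms), and $W$ is complete because $X$ and $U$ are. I would then let the required factorizations dictate the four arrows: the projections should be the coordinate projections $\alpha^\sharp(x,u)=x$ and $\beta^\sharp(x,u)=u$, and the embeddings are thereby forced to be the graphs $\alpha^\flat(x)=(x,f^\dagger x)$ and $\beta^\flat(u)=(f^\ddagger u,u)$, so that $\beta^\sharp\alpha^\flat=f^\dagger$ and $\alpha^\sharp\beta^\flat=f^\ddagger$ hold by inspection.

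With these definitions the algebraic part is immediate: $\alpha^\sharp\alpha^\flat(x)=x$ and $\beta^\sharp\beta^\flat(u)=u$, so $\alpha$ and $\beta$ really are pairs, and the two factorization identities $f^\dagger=\beta^\sharp\alpha^\flat$, $f^\ddagger=\alpha^\sharp\beta^\flat$ are just the reading of the second and first coordinates. Contractivity is where the hypotheses $\|f^\dagger\|\leq 1$ and $\|f^\ddagger\|\leq 1$ enter. The coordinate projections are contractive for the max norm by definition; for the embeddings one computes $\|\alpha^\flat x\|_W=\max(\|x\|,\|f^\dagger x\|)=\|x\|$ and $\|\beta^\flat u\|_W=\max(\|f^\ddagger u\|,\|u\|)=\|u\|$, because the off-diagonal coordinate never exceeds the diagonal one. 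Hence both embeddings are in fact isometric and $\|\alpha\|=\|\beta\|=1$.

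Finally, the estimate $\|\alpha^\flat-\beta^\flat f^\dagger\|\leq\e$ falls out of the same description, since for $x\in X$
$$(\alpha^\flat-\beta^\flat f^\dagger)(x)=(x,f^\dagger x)-(f^\ddagger f^\dagger x,f^\dagger x)=(x-f^\ddagger f^\dagger x,0),$$
and a vector supported on the first coordinate has $\|(x',0)\|_W=\|x'\|$, so that $\|(\alpha^\flat-\beta^\flat f^\dagger)(x)\|_W=\|({\bf 1}_X-f^\ddagger f^\dagger)x\|\leq\e\|x\|$.

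I do not expect a genuine obstacle here: the only point that needs care is choosing a $p$-norm on $X\times U$ making the graph embeddings and the coordinate projections contractive \emph{simultaneously}, and the max norm does exactly this precisely because $f^\dagger$ and $f^\ddagger$ are contractions. The remaining verifications ($p$-subadditivity of the maximum and completeness of $W$) are routine. One could alternatively take $W$ to be the push-out of $f^\dagger$ and $f^\ddagger$, which has the cosmetic advantage that $\alpha^\flat[X]$ and $\beta^\flat[U]$ together span it; but since the statement does not demand this, the $\ell_\infty$-sum is the economical choice.
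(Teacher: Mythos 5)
Your proof is correct, and it takes a genuinely different route from the paper's. The paper builds $W$ on $X\oplus U$ with the coordinate injections $\alpha^\flat(x)=(x,0)$, $\beta^\flat(y)=(0,y)$ as embeddings and the ``corrected'' maps $\alpha^\sharp(x,y)=x+f^\ddagger(y)$, $\beta^\sharp(x,y)=y+f^\dagger(x)$ as projections; to make this work it endows $X\oplus U$ with a weighted infimum $p$-norm (the amalgamation norm of \cite[Lemma 3.1]{CGK}, in the Kubi\'s--Solecki tradition of \cite{KS}), and even the seemingly innocent claim $\|(x,0)\|=\|x\|$ needs the hypothesis $\|f^\ddagger f^\dagger-{\bf 1}_X\|\leq\e$ for its lower bound. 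You flip the roles: the embeddings are the graphs $x\mapsto(x,f^\dagger x)$ and $u\mapsto(f^\ddagger u,u)$, the projections are the coordinate maps, and the norm is the plain maximum, which is indeed a $p$-norm and complete. All the verifications --- the two pair identities, the two factorizations $f^\dagger=\beta^\sharp\alpha^\flat$ and $f^\ddagger=\alpha^\sharp\beta^\flat$, contractivity (the embeddings even come out isometric, as they must for any contractive pair), and the estimate $\|\alpha^\flat-\beta^\flat f^\dagger\|\leq\e$ --- are then one-line computations, exactly as you present them. What the paper's heavier construction buys, and yours deliberately forgoes, is that $W$ is spanned by $\alpha^\flat[X]+\beta^\flat[U]$, a push-out-like minimality that matters in other amalgamation arguments but is neither asserted in the statement nor used in the paper's two applications of this lemma (the universality and stability propositions), which rely only on the displayed identities, the contractivity, the $\e$-estimate, and the fact that $W$ is finite dimensional whenever $X$ and $U$ are; your $W$ has all of these properties. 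One caveat on your closing aside: speaking of ``the push-out of $f^\dagger$ and $f^\ddagger$'' is loose, since these two maps do not share a domain; the paper's infimum norm is the correct way to realize that spanning variant.
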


The relevant diagram is
$$
\xymatrix{
X  \ar@<0.4ex>[rrrd]^{\alpha^\flat}   \ar@<0.4ex>[dd]^{f^\dagger} \\
&&& W  \ar@<0.3ex>@{.>}[ulll]^{\alpha^\sharp}   \ar@<0.3ex>@{.>}[dlll]^{\beta^\sharp} \\
U  \ar@<0.4ex>[rrru]^{\beta^\flat}   \ar@<0.4ex>[uu]^{f^\ddagger}
}
$$
This result is just the $p$-normed version of  \cite[Lemma 3.2]{GW} and depends on \cite[Lemma 3.1]{CGK} in the same way as
 \cite[Lemma 3.2]{GW} depends on \cite[Lemma 2.1]{KS}, which is the most common recent ancestor of all of them.

\begin{proof}
We present a lightweight proof, based on \cite[Lemma 3.1]{CGK}.
The space $W$ is just the direct sum $X\oplus U$ equipped with the $p$-norm
$$
\|(x,y)\|=\inf\left\{\left(\|x_0\|^p+\|y_1\|^p+\e^p\|x_2\|^p\right)^{1/p}: (x,y)=(x_0,0)+(0,y_1)+(x_2,-f^\dagger x_2)		\right\}.
$$
It is \emph{really} easy to see that $\|(x,0)\|=\|x\|$ and $\|(0,y)\|=\|y\|$ for every $x\in X$ and every $y\in U$. Thus, letting $\alpha^\flat(x)=(x,0)$ and
$\beta^\flat(y)=(0,y)$ we quickly obtain that $\|\alpha^\flat-\beta^\flat f^\dagger\|\leq\e$.
As for the projections, we are forced to define $\alpha^\sharp(x,y)=x+f^\ddagger(y)$ and
$\beta^\sharp(x,y)=y+f^\dagger(x)$.
It is then clear that
$$
\alpha^\sharp\alpha^\flat={\bf 1}_X,\quad
\beta^\sharp\beta^\flat={\bf 1}_U,\quad
f^\dagger= \beta^\sharp \alpha^\flat,\quad f^\ddagger= \alpha^\sharp \beta^\flat.
$$
To see that $\alpha^\sharp$ and $\beta^\sharp$ are contractive, pick $(x,y)\in W$ and suppose
$$
(x,y)=(x_0,0)+(0,y_1)+(x_2,-f^\dagger(x_2)).
$$
We then have
$$
\begin{aligned}
\|\alpha^\sharp (x,y)\|&=\|x_0+ x_2+ f^\ddagger(y_1) -f^\ddagger f^\dagger(x_2)\|
\leq \left(\|x_0\|^p+\|y_1\|^p+\e^p\|x_2\|^p\right)^{1/p}\\
\|\beta^\sharp (x,y)\|&=\|f^\dagger(x_0)+y_1\|\leq  \left(\|x_0\|^p+\|y_1\|^p\right)^{1/p}\leq \left(\|x_0\|^p+\|y_1\|^p+\e^p\|x_2\|^p\right)^{1/p}
\end{aligned}
$$
and since $\|(x,y)\|$ is the infimum of the numbers that can appear in the right hand side we have $\|\alpha^\sharp\|, \|\beta^\sharp\|\leq 1$.
\end{proof}

We also need a technique to ``paste'' operators defined on a chain of subspaces. Let $A$ and $B$ be $p$-Banach spaces and $(A_n)$ a chain of subspaces whose union is dense in $A$.
Let $a_n: A_n\lra B$ be a sequence of contractive operators such that $\|a_{n+1}|_{A_n}-a_n\|\leq \e_n$, where $\sum_n\e_n^p<\infty$. Then for each $x\in A_k$ the sequence $(a_n(x))_{n\geq k}$ converges in $B$ (it is a Cauchy sequence and $B$ is complete) and so there is a unique contractive operator $a:A\lra B$ such that
$$a(x)=\lim_{n\geq k}a_n(x)\quad\quad(x\in A_k).$$
This operator shall be referred to as the ``pointwise limit'' of the sequence $(a_n)$.

\subsection{``Universality''}
A monotone finite dimensional decomposition of a $p$-Banach space $X$ (1FDD, for short) is a chain $(X_n)$ of finite dimensional subspaces of $X$ whose union is dense in $X$ and such that each $X_n$ is 1-complemented in $X_{n+1}$. These inclusions and projections can be arranged into a sequence of contractive pairs
$$
\xymatrix{
X_0  \ar@<0.3ex>[r]
& X_1 \ar@<0.3ex>[r] \ar@<0.3ex>@{.>}[l] & X_2 \ar@<0.3ex>[r] \ar@<0.3ex>@{.>}[l] & ...\ar@<0.3ex>@{.>}[l]
&
}
$$
It is clear that every space with a 1FDD has the BAP with constant 1. One has.

\begin{proposition}
Every $p$-Banach space with a 1FDD is isometric to a 1-comple\-mented subspace of $\mathscr K_p$.
\end{proposition}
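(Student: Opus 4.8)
The plan is to realize $X$ as the \emph{pointwise limit} of a sequence of contractive pairs into $\mathscr K_p$, exploiting the almost universal complemented disposition of $\mathscr K_p$ together with the pasting construction for operators on a dense chain recorded just above. Write the $1$FDD of $X$ as a sequence of contractive pairs $v_n=\langle\iota_n,\pi_n\rangle\colon X_n\to X_{n+1}$, where $\iota_n$ is the inclusion and $\pi_n$ the contractive projection of $X_{n+1}$ onto $X_n$; let $P_n\colon X\to X_n$ be the associated monotone projections, so that $\bigcup_n X_n$ is dense and $P_n\to{\bf 1}_X$ pointwise. The goal is to produce a single contractive pair $\gamma=\langle\gamma^\flat,\gamma^\sharp\rangle\colon X\to\mathscr K_p$: then $\gamma^\flat$ is an isometry and $\gamma^\flat\gamma^\sharp$ is a contractive projection of $\mathscr K_p$ onto $\gamma^\flat[X]$, which is the required $1$-complemented isometric copy of $X$.

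Fix positive numbers $\e_n$ with $\sum_n\e_n^p<\infty$. I would build contractive pairs $\gamma_n\colon X_n\to\mathscr K_p$ inductively so that, reading each $\gamma_n^\sharp$ as a map into $X$ through $X_n\subseteq X$, one has $\|\gamma_{n+1}^\flat\iota_n-\gamma_n^\flat\|\le\e_n$ and $\|\gamma_{n+1}^\sharp-\gamma_n^\sharp\|\le\e_n$. The engine of the induction is the disposition property: given $\gamma_n$, apply it to the contractive pairs $\gamma_n\colon X_n\to\mathscr K_p$ and $v_n\colon X_n\to X_{n+1}$ to get a pair $w\colon X_{n+1}\to\mathscr K_p$ with $w\circ v_n=\gamma_n$ and $\|w\|\le 1+\e_n$; the identity $w\circ v_n=\gamma_n$ records $w^\flat\iota_n=\gamma_n^\flat$ and $\pi_n w^\sharp=\gamma_n^\sharp$. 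The almost-contractive pair $w$ is then turned into a genuinely contractive $\gamma_{n+1}$ by the correction lemmas (Lemma~\ref{lem:bound} and Lemma~\ref{lem:small}), at the cost of a perturbation of size $O(\e_n)$ that is absorbed into $\e_n$.

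Once the two summable estimates hold, the pasting construction produces contractive pointwise limits $\gamma^\flat\colon X\to\mathscr K_p$ (defined first on the dense union $\bigcup_m X_m$, where $\gamma^\flat x=\lim_n\gamma_n^\flat x$) and $\gamma^\sharp\colon\mathscr K_p\to X$ (namely $\gamma^\sharp=\lim_n\gamma_n^\sharp$). Because each $\gamma_n^\flat$ is isometric on $X_n$ and $\|\gamma_n^\flat x\|=\|x\|$ for all large $n$, the limit $\gamma^\flat$ is an isometry; and for $x\in X_m$ the relation $\gamma_n^\sharp\gamma_n^\flat={\bf 1}_{X_n}$ (valid for $n\ge m$) together with the summable estimates lets one pass to the limit in the double sequence $\gamma_j^\sharp\gamma_k^\flat x$ to conclude $\gamma^\sharp\gamma^\flat x=x$. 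Hence $\gamma^\sharp\gamma^\flat={\bf 1}_X$ on the dense union and therefore everywhere, so $\gamma$ is the desired contractive pair and the proof is complete.

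The hard part is not the embedding but the projection. The disposition property only controls $\pi_n w^\sharp$, that is, $\gamma_n^\sharp$ read \emph{after} projecting onto $X_n$; nothing in it forces the raw projections $\gamma_n^\sharp\colon\mathscr K_p\to X$ to form a Cauchy sequence, and indeed with the naive choice $\gamma_{n+1}=w$ one only gets $\gamma_{n+1}^\sharp-\gamma_n^\sharp=({\bf 1}-\pi_n)\gamma_{n+1}^\sharp$, which need not be small. To secure the second summable estimate one must interleave a genuine back-and-forth: alternate extending the embedding of the next block $X_{n+1}$ with ``capturing'' one more piece $U_m$ of $\mathscr K_p$ inside the control of the projection, using the amalgamation of pairs (Lemma~\ref{lem:PBwithpairs}) and the absorption property of the Fra\"\i ss\'e sequence $(u_n)$ to push the relevant allowed pair into some $U_m$, and using Lemma~\ref{lem:W} to realize the approximate pair between $X$ and $\mathscr K_p$ by honest contractive pairs at each stage. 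Keeping every distortion within $\e_n$ throughout this interleaving, so that both pointwise limits exist simultaneously, is the real content of the argument.
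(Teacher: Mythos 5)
Your plan starts the right way and, to your credit, you diagnose the central difficulty correctly: the almost universal complemented disposition of $\mathscr K_p$ only controls $\pi_n w^\sharp$, so the raw projections need not form a Cauchy sequence. But that is exactly the point at which your proof stops being a proof. The interleaving you describe in your last paragraph --- Lemma~\ref{lem:W} to trade approximate pairs for honest ones, the pull-back amalgamation of Lemma~\ref{lem:PBwithpairs}, and the absorption property of the Fra\"\i ss\'e sequence $(u_n)$ --- is precisely the paper's argument, but you never execute it, and it is the whole content of the proposition. What makes it work in the paper is a feature your sketch never isolates: at each step the new pair $f$ is built with codomain $U_{k(n+1)}$ in such a way that the composite pair $U_{k(n)}\to U_{k(n+1)}$ is the \emph{bonding pair of} $\mathscr K_p$ itself (via Lemma~\ref{lem:density} and the Fra\"\i ss\'e property), and this forces the exact coherence $f^\sharp|_{U_{k(n)}}=\xi_n^\flat f_n^\ddagger$ of the new projection with the old one. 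It is this identity --- not the disposition property, which is never invoked in the paper's proof of this proposition --- that makes the projections converge pointwise on the dense union $\bigcup_n U_{k(n)}$. Announcing that such a scheme exists, while ``keeping every distortion within $\e_n$'', leaves a genuine gap.

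The one step you do spell out is also flawed as stated. You propose to turn the $(1+\e_n)$-bounded pair $w$ into a contractive \emph{exact} pair $\gamma_{n+1}$ using Lemmas~\ref{lem:bound} and~\ref{lem:small}; but Lemma~\ref{lem:bound} achieves contractivity by replacing the $p$-norm of the \emph{codomain}, and here the codomain is $\mathscr K_p$, whose norm is fixed --- we need an isometry into $\mathscr K_p$ with its given norm, not into a renorming of it. The obvious alternative, dividing both components of $w$ by $1+\e_n$, keeps contractivity but destroys the exactness $\gamma_{n+1}^\sharp\gamma_{n+1}^\flat={\bf 1}_{X_{n+1}}$ on which your limit argument explicitly relies. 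This tension is not incidental: it is why the paper abandons exact pairs at finite stages altogether and instead constructs contractive maps $f_n^\dagger:X_n\to U_{k(n)}$, $f_n^\ddagger:U_{k(n)}\to X_n$ forming only approximate pairs, $\|f_n^\ddagger f_n^\dagger-{\bf 1}_{X_n}\|<2^{-n}$, recovering an exact contractive pair (and the isometric character of the embedding) only after passing to the pointwise limits.
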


\begin{proof}
Suppose $(X_n)$ is a 1FDD of $X$. For each integer $n$ we denote by $\xi_n: \xymatrix{
X_n  \ar@<0.3ex>[r]
& X_{n+1} \ar@<0.3ex>@{.>}[l] }
$
the  ``bonding'' pair, that is, $\xi^\flat_n$ is the inclusion of $X_n$ into $X_{n+1}$ and $\xi^\sharp_n: X_{n+1}\lra X_n$ is a fixed contractive projection.

Keeping the notations of Section~\ref{sec:aucd} and considering the spaces $U_n$ as subspaces of $\mathscr K_p$, we shall construct an increasing sequence of integers $(k(n))$ and a system of contractive operators $f^\dagger_n:X_n\lra U_{k(n)}$ and $f^\ddagger_n:U_{k(n)}\lra X_n$ such that:
\begin{enumerate}
\item $\|f^\ddagger_n f^\dagger_n-{\bf 1}_{X_n}\|<2^{-n}$,
\item $\|f^\dagger_{n+1}|_{X_n}- f^\dagger_n\|<2^{-n}$,
\item $\|f^\ddagger_{n+1}|_{U_{k(n)}}- f^\ddagger_n\|<2^{-n}$.
\end{enumerate}
Since $\sum_n2^{-np}<\infty$ the remark closing the preceding Section shows that the pointwise limits of the sequences $(f^\dagger_n)$ and $(f^\ddagger_n)$  provide a contractive pair $\xymatrix{
X  \ar@<0.3ex>[r]
& \mathscr K_p\ar@<0.3ex>@{.>}[l] }
$
and completes the proof.

The required sequence is constructed by induction. We can assume $X_1=0$ and take $f^\dagger_1=0$ and $f^\ddagger_1=0$.

Now suppose that $f^\dagger_n:X_n\lra U_{k(n)}$ and $f^\ddagger_n:U_{k(n)}\lra X_n$ have been already constructed and let us see how to get $k(n+1)$ and the  maps
$f^\dagger_{n+1}:X_{n+1}\lra U_{k(n+1)}$ and $f^\ddagger_{n+1}:U_{k(n+1)}\lra X_{n+1}$.

We suggest the reader to take a pencil and paper for some scribbling.

$$
\xymatrix{
X_n \ar@<0.3ex>[rrrr]^{\xi^\flat_n} \ar@<0.3ex>[ddd]^{f^\dagger_n} \ar@<0.3ex>[rd]^{\alpha^\flat} && && X_{n+1} \ar@<0.3ex>@{.>}[llll]^{\xi^\sharp_n} \ar@<0.3ex>[ld]^{\overline{\alpha}^\flat}  \ar@<0.3ex>[ddd]^{f^\flat} \\
& W  \ar@<0.3ex>@{.>}[ul]^{\alpha^\sharp} \ar@<0.3ex>@{.>}[ddl]^{\beta^\sharp}  \ar@<0.3ex>[rr]^{\overline{\xi}^\flat_n} & & \PB \ar@<0.3ex>@{.>}[ur]^{\overline{\alpha}^\sharp} \ar@<0.3ex>@{.>}[ll]^{\overline{\xi}^\sharp_n} \ar@<0.3ex>[d]^g &
\\
&& & A  \ar@<0.3ex>[rd]^{u^\flat} \ar@<0.3ex>@{.>}[u]^{g^{-1}}&
\\
U_{k(n)} \ar@<0.3ex>[ruu]^{\beta^\flat} \ar@<0.3ex>[uuu]^{f^\ddagger_n}  \ar@<0.3ex>[rrrr]^{} && && U_{k(n+1)} \ar@<0.3ex>@{.>}[ul]^{u^\sharp} \ar@<0.3ex>@{.>}[llll]^{\text{bonding pair}} \ar@<0.3ex>@{.>}[uuu]^{f^\sharp}
}
$$

Set $\e =\|f^\ddagger_n\, f^\dagger_n-{\bf 1}_{X_n}\|<2^{-n}$ and reserve a small $\delta>0$ of room. The precise value of $\delta$ will be specified later.

\smallskip

$\bigstar$ First, we apply Lemma~\ref{lem:W} to $f^\dagger_n$ and $f^\ddagger_n$. In this way we obtain the space $W$ and the left triangle in the preceding diagram. Note that $\alpha=\langle \alpha^\flat, \alpha^\sharp\rangle$ and
$\beta=\langle \beta^\flat, \beta^\sharp\rangle$ are contractive pairs such that
$$
\|\beta^\flat f^\dagger_n-\alpha^\flat\|\leq \e,\quad\quad f^\dagger_n=\beta^\sharp \alpha^\flat,  \quad\quad f^\ddagger_n=\alpha^\sharp \beta^\flat.
$$

$\bigstar$ Then we amalgamate $\xi_n$ and $\alpha$ using Lemma~\ref{lem:PBwithpairs} which yields the upper commutative trapezoid.
\smallskip

$\bigstar$ Next we apply Lemma~\ref{lem:density} to the composition $\overline{\xi}_n\circ\beta$ (which is a contractive pair) thus obtaining a surjective $\delta$-isometry $g:\PB\lra A$ in such a way that the composition $\langle g,g^{-1}\rangle\circ \overline{\xi}_n\circ\beta$ turns out to be an allowed pair.
\smallskip

$\bigstar$ By the ``Fra\"{\i}ss\'e character'' of the sequence of pairs $(u_n)$
there must be some  $k(n+1)>k(n)$ and an allowed pair
$u: \xymatrix{
A  \ar@<0.3ex>[r]
& U_{k(n+1)} \ar@<0.3ex>@{.>}[l] }
$
such that $u\circ\langle g,g^{-1}\rangle\circ \overline{\xi}_n\circ\beta$ is the bonding pair $\xymatrix{
U_{k(n)} \ar@<0.3ex>[r]
& U_{k(n+1)} \ar@<0.3ex>@{.>}[l] }
$.
\smallskip

$\bigstar$ Now look at the pair $f=u\circ\langle g,g^{-1}\rangle\circ \overline{\alpha}$. Note that $f$ need not to be contractive as we only have the bound
$\|f\|\leq \| \langle g,g^{-1}\rangle\|\leq 1+\delta$.

\smallskip

Anyway, one has:

\begin{itemize}
\item[(4)] $f^\sharp  f^\flat={\bf 1}_{X_{n+1}}$,
\item[(5)] $\|f^\flat|_{X_n}-f^\dagger_n\|\leq (1+\delta)\e$,
\item[(6)] $f^\sharp|_{U_{k(n)}}=\xi^\flat_n  f^\ddagger_n$.
\end{itemize}

The first identity is trivial.
As for (4), note that $f^\flat|_{X_n}=u^\flat \, g\, \overline{\xi}^\flat_n\, \alpha^\flat$, hence
$$
\|f^\flat|_{X_n}-f^\dagger_n\|= \|u^\flat\,  g\, \overline{\xi}^\flat_n \,\alpha^\flat- \underbrace{u^\flat \, g\, \overline{\xi}^\flat_n\, \beta^\flat}_{\text{inclusion}}f^\dagger_n\|\leq \|g\|\| \beta^\flat \, f^\dagger_n-\alpha^\flat\|\leq (1+\delta)\e.
$$
To check (6) observe that the inclusion of $U_{k(n)}$ into  $U_{k(n+1)}$ can be written as $u^\flat\,g\,\overline{\xi}^\flat_n\,\beta^\flat$. Besides, $f^\sharp=\overline{\alpha}^\sharp\,g^{-1} u^\sharp$ so, recalling that $\overline{\alpha}^\sharp\, \overline{\xi}^\flat_n=\xi^\flat\, \alpha^\sharp$, we have
$$
f^\sharp|_{U_{k(n)}}=  \overline{\alpha}^\sharp\, g^{-1} u^\sharp\, u^\flat\, g\, \overline{\xi}^\flat_n\,\beta^\flat
=  \overline{\alpha}^\sharp\, \overline{\xi}^\flat_n\,\beta^\flat = \xi^\flat_n\alpha^\sharp\beta^\flat= \xi^\flat_n f^\ddagger_n.
$$
A final touch just to render the maps contractive. Set
$$
f^\dagger_{n+1}=\frac{f^\flat}{1+\delta}\quad\quad\text{and}\quad\quad f^\ddagger_{n+1}=\frac{f^\sharp}{1+\delta}.
$$
Then $f^\ddagger_{n+1} f^\dagger_{n+1}=(1+\delta)^{-2}{\bf 1}_{X_{n+1}}$, hence using (4),
$$
\|f^\ddagger_{n+1}  f^\dagger_{n+1}-{\bf 1}_{X_{n+1}}\|\leq 1-\frac{1}{(1+\delta)^2}<\frac{1}{2^{n+1}}
$$
for $\delta$ sufficiently small.

Also, from (5) and (6) we get
$$
\begin{aligned}
\|f^\dagger_{n+1}|_{X_n}-f^\dagger_{n}\|&\leq\left( \|f^\dagger_{n+1}-f^\flat\|^p+ \|f^\flat|_{X_n}-f^\dagger_{n}\|^p	\right)^{1/p}\leq (\delta^p+(1+\delta)^p\e^p)^{1/p}<2^{-n},\\
\|f^\ddagger_{n+1}-\xi^\flat_n  f^\ddagger_{n}\|&= \|f^\ddagger_{n+1}-f^\sharp\|\leq \delta<2^{-n},
\end{aligned}
$$
for $\delta$ sufficiently small.
\end{proof}

\subsection{Uniqueness} As we already mentioned Kalton observed in \cite{kaltuni} that for each $0<p<1$ there is a complementably universal $p$-Banach space for the BAP. For $p=1$ an earlier example, sprung from two different lines of reseach carried out independently by Kadec and Pelczy\'nski, was known. It turs out that ``our'' space $\mathscr K_p$ is just a ``renorming'' of Kalton's:

\begin{proposition}
A separable $p$-Banach space is complementably universal for the BAP if and only if it is linearly isomorphic to $\mathscr K_p$.
\end{proposition}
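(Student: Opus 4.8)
The statement is an isomorphic characterisation, so the plan is to prove first that $\mathscr K_p$ itself is complementably universal for the BAP and then to use Pe\l czy\'nski's decomposition method to upgrade ``mutually complemented'' to ``isomorphic''; since complementable universality is clearly an isomorphic invariant, the first step already settles the implication ``isomorphic to $\mathscr K_p$ $\Rightarrow$ complementably universal''. For that step, let $X$ be a separable $p$-Banach space with the $\lambda$-AP; the goal is to realise $X$ as a complemented subspace of a space with a $1$FDD and then invoke the Proposition asserting that every $p$-Banach space with a $1$FDD is (isometric to) a $1$-complemented subspace of $\mathscr K_p$. Using the very definition of the $\lambda$-AP I would build inductively finite-rank operators $R_n$ on $X$ with $\|R_n\|\le \lambda+\e_n$ such that $R_{n+1}$ fixes the finite-dimensional range of $R_n$ (so that $R_{n+1}R_n=R_n$) and $R_nx\to x$ for every $x$. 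Writing $\Delta_n=R_n-R_{n-1}$ and $Y_n=\Delta_n[X]$, let $Z$ be the closed linear span of $\bigcup_n Y_n$ inside the space of sequences $(z_n)$, $z_n\in Y_n$, normed by $\|(z_n)\|=\sup_m\|\sum_{n\le m}z_n\|_X$. This is a $p$-norm, the partial-sum projections are automatically contractive, so $(Y_n)$ is a \emph{monotone} FDD of $Z$; and the maps $Jx=(\Delta_nx)_n$, $Q(z_n)=\sum_nz_n$ are contractive with $QJ={\bf 1}_X$, exhibiting $X$ as a complemented subspace of $Z$. Composing the complementation $X\hookrightarrow Z$ with the $1$-complemented embedding $Z\hookrightarrow\mathscr K_p$ shows $X$ is complemented in $\mathscr K_p$, as wanted.

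For the converse, let $\mathscr V$ be a separable $p$-Banach space with the BAP that is complementably universal for the BAP. The key preliminary is a self-similarity lemma: \emph{any} such space satisfies $\mathscr V\cong\ell_p(\mathscr V)$, where $\ell_p(\mathscr V)$ denotes the $\ell_p$-sum of countably many copies of $\mathscr V$. Indeed $\ell_p(\mathscr V)$ again has the BAP, so by universality $\mathscr V\cong\ell_p(\mathscr V)\oplus C$ for some $C$; applying $\ell_p(\cdot)$ and using $\ell_p(\ell_p(\mathscr V))\cong\ell_p(\mathscr V)$ gives $\ell_p(\mathscr V)\cong\ell_p(\mathscr V)\oplus\ell_p(C)$, whence
\[
\mathscr V\cong\ell_p(\mathscr V)\oplus C\cong\ell_p(\mathscr V)\oplus\ell_p(C)\oplus C\cong\ell_p(\mathscr V)\oplus\ell_p(C)\cong\ell_p(\mathscr V).
\]
Since the previous step shows $\mathscr K_p$ is itself complementably universal, the same lemma yields $\mathscr K_p\cong\ell_p(\mathscr K_p)$.

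Now run the decomposition method. Because $\mathscr K_p$ has the BAP, universality of $\mathscr V$ gives $\mathscr K_p\oplus A\cong\mathscr V$; because $\mathscr V$ has the BAP, universality of $\mathscr K_p$ gives $\mathscr V\oplus B\cong\mathscr K_p$. Using $\mathscr K_p\cong\ell_p(\mathscr K_p)$ and absorbing one summand into an $\ell_p$-sum,
\[
\mathscr K_p\cong\ell_p(\mathscr V\oplus B)\cong\ell_p(\mathscr V)\oplus\ell_p(B)\cong\mathscr V\oplus\ell_p(\mathscr V\oplus B)\cong\mathscr V\oplus\mathscr K_p,
\]
and symmetrically (using $\mathscr V\cong\ell_p(\mathscr V)$ and $\mathscr V\cong\mathscr K_p\oplus A$) one obtains $\mathscr V\cong\mathscr K_p\oplus\mathscr V$. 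Since $\mathscr K_p\oplus\mathscr V\cong\mathscr V\oplus\mathscr K_p$, these two isomorphisms force $\mathscr K_p\cong\mathscr V$, completing the proof.

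I expect the main obstacle to be the reduction ``BAP $\Rightarrow$ complemented in a space with a monotone FDD''. The delicate points are the inductive construction of the nested finite-rank operators with $R_{n+1}R_n=R_n$ and controlled $p$-quasinorms, and the verification that $Z$ is a genuine $p$-Banach space for which $(Y_n)$ is a monotone FDD and for which $J$ actually lands in $Z$ (this needs $\|R_kx-R_mx\|\to0$); these are routine over Banach spaces but require carrying the $p$-subadditivity estimates throughout. One should also check the bookkeeping isomorphisms $\ell_p(A\oplus B)\cong\ell_p(A)\oplus\ell_p(B)$, $\ell_p(\ell_p(A))\cong\ell_p(A)$ and $\ell_p(A)\oplus A\cong\ell_p(A)$, which hold isometrically, and---if it is not built into the notion being used---that a complementably universal space for the BAP does itself have the BAP.
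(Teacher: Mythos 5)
Your proof is correct and follows the same skeleton as the paper's: first show that $\mathscr K_p$ is complementably universal by reducing to the Proposition that every $p$-Banach space with a 1FDD embeds $1$-complementably into $\mathscr K_p$, then get uniqueness by Pe\l czy\'nski's decomposition method. The difference is in what is proved versus cited. The paper disposes of the first step in one sentence by invoking the known result (Kalton's $p$-analogue of Pe\l czy\'nski's theorem) that every separable $p$-Banach space with the BAP is complemented in one with a basis, and of the second step by merely naming the decomposition method; you instead inline both ingredients. Your construction of $Z$ from the nested finite-rank operators $R_n$ with $R_{n+1}R_n=R_n$ is essentially the standard proof of the cited theorem, stopped at the FDD stage --- which is all the paper's Proposition requires, so you never need to upgrade an FDD to a basis --- and you spell out the self-similarity lemma $\mathscr V\cong\ell_p(\mathscr V)$ and the absorption chains, which the paper leaves implicit. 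Two small remarks. First, $J$ is not contractive as you claim: $\|Jx\|=\sup_m\|R_mx\|$ only gives $\|J\|\le\sup_n\|R_n\|\le\lambda+1$; but boundedness is all you use, since the statement is isomorphic rather than isometric. Second, your closing caveat --- that the universal space must itself be assumed to have the BAP --- is a genuine point and not mere bookkeeping: without it the statement is false (take $\mathscr K_p\oplus W$ with $W$ separable and failing the BAP; it contains complemented copies of all separable $p$-Banach spaces with the BAP but is not isomorphic to $\mathscr K_p$, as the BAP passes to complemented subspaces), so the paper's definition of ``complementably universal for the BAP'' must be read with that assumption built in, exactly as you do.
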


\begin{proof}
The space $\mathscr K_p$  is complementably universal for the BAP. Indeed, every separable $p$-Banach space with the BAP is complemented in one with a basis, which clearly has a 1FDD (actually ``one-dimensional'') under an equivalent $p$-norm.

As for the converse, just observe that there is only a separable $p$-Banach space, complementably universal for the BAP, up to linear isomorphisms, by Pe\l czy\'nski decomposition method.
\end{proof}

It is difficult to imagine a space peskier than $\mathscr K_p$: indeed, the following spaces are all isomorphic to $\mathscr K_p$:
\begin{itemize}
\item Direct sums $\mathscr K_p\oplus X$, when $X$ is a separable $p$-Banach space with the BAP.
\item Spaces of $\mathscr K_p$-valued sequences $X(\mathbb N, \mathscr K_p)$, when $X$ is a $p$-Banach sequence space, in particular $\ell_q(\mathbb N, \mathscr K_p)$ for $0<p\leq q<\infty$ and   $c_0(\mathbb N, \mathscr K_p)$.
\item The $p$-Banach space tensor products $X\hat{\otimes}_p \mathscr K_p$ when when $X$ is a separable $p$-Banach space with the BAP; cf. \cite{turpin}.
\item The $p$-convex envelope of $ \mathscr K_q$ for $0<q<p$.
\end{itemize}

In contrast the space $\mathscr K_p\oplus_p L_p$ is of almost universal complemented disposition and not isomorphic to $\mathscr K_p$ if $p<1$.

We now address the ``isometric uniqueness'' of $\mathscr K_p$ and its ``rotational'' properties. Our main result in this line is Theorem~\ref{th:iso-uniq}, which generalizes Theorem 6.3 in \cite{GW}. Our route to the proof is slightly different from that of \cite{GW} because
we are not sure that  \cite[Lemma 6.2]{GW} is true as stated.

The following ``stability'' result is  interesting in its own right:

\begin{proposition}
Let $X$ be a $p$-Banach space with a 1FDD and satisfying $[\Game]$.
Let $f^\dagger:E\lra X$ and $f^\ddagger:X\lra E$ be contractive operators such that $\|f^\ddagger f^\dagger-{\bf 1}_E\|<\e$, where $E$ is a finite dimensional $p$-Banach and $\e>0$.

Then  there is an isometry $f^\flat: E\lra X$ whose range is 1-complemented and such that  $ \|f^\dagger - f^\flat\| < \e$.
\end{proposition}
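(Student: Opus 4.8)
The plan is to manufacture $f^\flat$ as the limit of an iteration that repeatedly upgrades an approximate pair into a better one, each round using Lemma~\ref{lem:W} to convert the approximate data into genuine contractive pairs over a \emph{finite dimensional} space and then using $[\Game]$ to transplant that configuration back into $X$. Write $\e_0=\|f^\ddagger f^\dagger-{\bf 1}_E\|<\e$, fix the 1FDD $(X_n)$ of $X$ together with the canonical contractive projections $P_n\colon X\lra X_n$ (obtained by pasting the bonding projections; recall $P_n\to{\bf 1}_X$ pointwise) and the inclusions $j_n\colon X_n\lra X$. Starting from $\phi_0=f^\dagger$ and $\psi_0=f^\ddagger$ I would build contractive $\phi_k\colon E\lra X$ and $\psi_k\colon X\lra E$ such that, for $k\ge1$, each $\phi_k$ is an $\eta_k$-isometry with $\psi_k\phi_k={\bf 1}_E$ and $\|\psi_k\|\le(1+\eta_k)^2$, while $\eta_k\to0$; the desired map will be $f^\flat=\lim_k\phi_k$.

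The passage from $k$ to $k+1$ runs as follows. Choose $n=n(k+1)$ so large that $(j_nP_n-{\bf 1})\phi_k$ is negligible on the finite dimensional space $\phi_k[E]$, and push the pair down to $X_n$ by setting $g^\dagger=P_n\phi_k\colon E\lra X_n$ and $g^\ddagger=\psi_k j_n\colon X_n\lra E$; then $\|g^\ddagger g^\dagger-{\bf 1}_E\|$ is as close as we wish to $\|\psi_k\phi_k-{\bf 1}_E\|$. Applying Lemma~\ref{lem:W} to $g^\dagger,g^\ddagger$ yields, since $E$ and $X_n$ are finite dimensional, a \emph{finite dimensional} space $W$ and contractive pairs $\alpha\colon E\lra W$, $\beta\colon X_n\lra W$ with $g^\dagger=\beta^\sharp\alpha^\flat$, $g^\ddagger=\alpha^\sharp\beta^\flat$ and $\|\alpha^\flat-\beta^\flat g^\dagger\|\le\|g^\ddagger g^\dagger-{\bf 1}_E\|$. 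Now $\beta^\flat[X_n]$ is $1$-complemented in $W$ and $j_n(\beta^\flat)^{-1}\colon\beta^\flat[X_n]\lra X$ is an isometry onto $X_n$, which is $1$-complemented in $X$; hence $[\Game]$ provides, for any preassigned $\eta_{k+1}>0$, an $\eta_{k+1}$-isometry $\Psi\colon W\lra X$ with $(1+\eta_{k+1})$-complemented range extending it, so that $\Psi\beta^\flat=j_n$. Writing $\Psi^\sharp=\Psi^{-1}R$, where $R$ is a projection of norm $\le1+\eta_{k+1}$ onto $\Psi[W]$ and $\Psi^{-1}$ is the inverse of $\Psi$ onto its range, we get $\Psi^\sharp\Psi={\bf 1}_W$, $\|\Psi^\sharp\|\le(1+\eta_{k+1})^2$ and $\Psi^\sharp j_n=\beta^\flat$. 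Finally I set
$$
\phi_{k+1}=\Psi\alpha^\flat,\qquad \psi_{k+1}=\alpha^\sharp\Psi^\sharp.
$$
Then $\phi_{k+1}$ is an $\eta_{k+1}$-isometry, $\psi_{k+1}\phi_{k+1}=\alpha^\sharp\Psi^\sharp\Psi\alpha^\flat={\bf 1}_E$, and the crucial identity $\psi_{k+1}j_n=\alpha^\sharp\Psi^\sharp j_n=\alpha^\sharp\beta^\flat=g^\ddagger=\psi_k j_n$ shows that $\psi_{k+1}$ and $\psi_k$ agree on $X_{n(k+1)}$.

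For the limit I would estimate displacements. Since $\Psi\beta^\flat g^\dagger=j_nP_n\phi_k$, one has $\|\phi_{k+1}-j_nP_n\phi_k\|\le(1+\eta_{k+1})\|\alpha^\flat-\beta^\flat g^\dagger\|\le(1+\eta_{k+1})\|g^\ddagger g^\dagger-{\bf 1}_E\|$, so $\|\phi_{k+1}-\phi_k\|$ is controlled by $\|\psi_k\phi_k-{\bf 1}_E\|$ together with a push error. For $k\ge1$ the \emph{exact} identity $\psi_k\phi_k={\bf 1}_E$ makes this displacement as small as we please, while at the first step it is bounded by essentially $\e_0$. Arranging the push errors and the $\eta_k$ to decay fast, the series $\sum_k\|\phi_{k+1}-\phi_k\|^p$ (operator norm into the $p$-Banach space $X$, which is $p$-subadditive) converges and its total can be kept strictly below $\e^p$ precisely because $\e_0<\e$; thus $\phi_k$ converges in operator norm to some $f^\flat$ with $\|f^\flat-f^\dagger\|<\e$, and since $\eta_k\to0$ the limit $f^\flat$ is an exact isometry. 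For the projection, the agreement $\psi_{k+1}|_{X_{n(k+1)}}=\psi_k|_{X_{n(k+1)}}$ means that on the dense union $\bigcup_kX_{n(k)}$ the sequence $(\psi_k)$ is eventually constant; being uniformly bounded it converges pointwise on all of $X$ to a contractive $\psi\colon X\lra E$ (this is the pointwise-limit/pasting device recalled earlier in this section). A routine $\e/3$ argument using $\psi_k\phi_k={\bf 1}_E$ and $\phi_k\to f^\flat$ then gives $\psi f^\flat={\bf 1}_E$, whence $f^\flat\psi$ is a contractive projection onto $f^\flat[E]$ and the range is $1$-complemented.

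The main obstacle, and the point needing genuine care, is the double bookkeeping of constants: the very first correction is forced to move $\phi$ by about $\e_0$, so only the slack $\e^p-\e_0^p$ is left to absorb \emph{all} later corrections, which is exactly why it is essential that from the first step onwards $\psi_k\phi_k={\bf 1}_E$ holds \emph{exactly} (making the subsequent displacements freely small) rather than merely approximately. The second delicate point is that the projections $\psi_k$ need \emph{not} converge in operator norm -- they only stabilise on each finite piece $X_{n(k)}$ -- so one is obliged to pass to the pointwise limit and to verify $\psi f^\flat={\bf 1}_E$ by hand; everything else reduces to the $p$-normed estimate $\|S+T\|^p\le\|S\|^p+\|T\|^p$ for operators valued in a $p$-Banach space.
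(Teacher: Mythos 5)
Your overall skeleton is the same as the paper's: both proofs iterate the cycle (push the current approximate pair into a finite piece $X_n$ of the 1FDD) $\rightarrow$ (Lemma~\ref{lem:W} to realize it as a composition of genuine pairs through a finite-dimensional space $W$) $\rightarrow$ (property $[\Game]$ to map $W$ back into $X$ compatibly with the inclusion of $X_n$) $\rightarrow$ (pass to the limit). The genuine difference is the invariant you carry: you keep the identities $\psi_k\phi_k={\bf 1}_E$ and $\psi_{k+1}|_{X_{n(k+1)}}=\psi_k|_{X_{n(k+1)}}$ \emph{exact} at the price of letting the maps have norm up to $(1+\eta_k)^2$, whereas the paper keeps all maps contractive (renormalizing at every step) at the price of only approximate identities $\|f_k^\ddagger f_k^\dagger-{\bf 1}_E\|\le\e_{k+1}$, and must then extract the limit projection by an ultrafilter limit $f^\sharp(x)=\lim_{\mathscr{U}(n)}f_n^\ddagger(x)$; your exact stabilization would make that ultrafilter unnecessary.

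There is, however, a concrete inconsistency that breaks the literal argument from the second round on. You declare $\phi_k,\psi_k$ contractive and simultaneously allow $\|\phi_k\|\le 1+\eta_k$, $\|\psi_k\|\le(1+\eta_k)^2$; as actually constructed ($\phi_{k+1}=\Psi\alpha^\flat$, $\psi_{k+1}=\alpha^\sharp\Psi^\sharp$ with $\Psi$ only an $\eta_{k+1}$-isometry) they are \emph{not} contractive. Hence at the next round $g^\dagger=P_n\phi_k$ and $g^\ddagger=\psi_k j_n$ are not contractive, and Lemma~\ref{lem:W} does not apply to them: its statement assumes contractive operators, and its proof really uses this (the identities $\|(x,0)\|=\|x\|$ and $\|(0,y)\|=\|y\|$ fail without $\|f^\dagger\|,\|f^\ddagger\|\le 1$). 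The repair is routine but costs something: rescale $\tilde g^\dagger=g^\dagger/(1+\eta_k)$ and $\tilde g^\ddagger=g^\ddagger/(1+\eta_k)^2$ before invoking the lemma. The identity $\psi_{k+1}\phi_{k+1}={\bf 1}_E$ survives intact (it comes from $\Psi^\sharp\Psi={\bf 1}_W$ and $\alpha^\sharp\alpha^\flat={\bf 1}_E$, which are unaffected), but the stabilization identity degrades to $\psi_{k+1}|_{X_{n(k+1)}}=\psi_k|_{X_{n(k+1)}}/(1+\eta_k)^2$, i.e.\ agreement only up to a factor $O(\eta_k)$. That is still enough: with $\sum_k\eta_k^p<\infty$ the sequence $(\psi_k)$ is uniformly Cauchy on each $X_{n(k)}$, the pasting device recalled before the proposition gives the pointwise limit $\psi$, and your $\e/3$ argument for $\psi f^\flat={\bf 1}_E$ goes through verbatim, as does the displacement bookkeeping (the strict inequality $\e_0<\e$ leaves the slack needed to absorb the extra $O(\eta_k)$ terms). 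So the proof is salvageable with its structure, and its advantage over the paper's ultrafilter step, essentially intact; but as written the induction hypothesis is self-contradictory and the key lemma is applied outside its hypotheses.
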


\begin{proof}
We fix a 1FDD $(X_n)$ of $X$ and we denote by $\xi_n: \xymatrix{
X_n  \ar@<0.3ex>[r]
& X \ar@<0.3ex>@{.>}[l] }
$
and $\xi_{(n,k)}: \xymatrix{
X_n  \ar@<0.3ex>[r]
& X_k \ar@<0.3ex>@{.>}[l] }
$
the corresponding pairs of operators.

We also fix a sequence $(\e_n)_{n\geq 0}$ of positive numbers with $\e_1<\e$
 such that $\|f^\ddagger f^\dagger-{\bf 1}_E\|<\e_1$ and
 $\sum_{n\geq0}\e_n^p<\e^p$. Note that we must first choose $\e_1$ and then the other $\e_n$'s.

 By using a small automorphism of $X$ one can obtain $n(0)$ and contractive operators
$f_0^\dagger:E\lra X_{n(0)}$ and $f_0^\ddagger:X_{n(0)}\lra E$ so that
$$\|f^\dagger - f_0^\dagger\|< \e_0\quad\quad\text{and}\quad\quad\|f_0^\ddagger f_0^\dagger-{\bf 1}_E\|\leq \e_1.$$
Let us apply Lemma~\ref{lem:W} to $f_0^\dagger, f_0^\ddagger$ and $\e_1$ to obtain the diagram
$$
\xymatrix{
E  \ar@<0.4ex>[rrrd]^{\alpha^\flat}   \ar@<0.4ex>[dd]^{f_0^\dagger} \\
&&& W  \ar@<0.3ex>@{.>}[ulll]^{\alpha^\sharp}   \ar@<0.3ex>@{.>}[dlll]^{\beta^\sharp} \\
X_{n(0)} \ar@<0.4ex>[rrru]^{\beta^\flat}   \ar@<0.4ex>[uu]^{f_0^\ddagger}
}
$$
where $\alpha$ and $\beta$ are contractive pairs and
$$
f_0^\dagger=\beta^\sharp\alpha^\flat, \quad f_0^\ddagger=\alpha^\sharp\beta^\flat, \quad \|\alpha^\flat - \beta^\flat f_0^\ddagger\|\leq \e_1.
$$
As $X$ has property $[\Game]$, after normalizing a suitable almost-isometry $W\lra X$ and the corresponding projection, one obtains $n(1)>n(0)$ and two contractive operators $\gamma^\dagger: W\lra X_{n(1)}$
 and $\gamma^\ddagger: X_{n(1)}\lra W$ such that
$$\|\gamma^\ddagger\gamma^\dagger-{\bf 1}_W\|<\e_2\quad\quad\text{and}\quad\quad \|\gamma^\dagger\beta^\flat- \xi_{(n(0),n(1))}^\flat\|<\e_2.$$
Letting $f_1^\dagger=  \gamma^\dagger \alpha^\flat $ and $f_1^\ddagger= \alpha^\sharp \gamma^\ddagger$ we have $\|f_1^\ddagger f_1^\dagger-{\bf 1}_E\|<\e_2$ and
$$
\|f_1^\dagger-f_0^\dagger\|^p
=
\| \gamma^\dagger \alpha^\flat- \gamma^\dagger \beta^\flat f_0^\dagger +  \gamma^\dagger \beta^\flat f_0^\dagger - f_0^\dagger\|^p\leq
\|  \alpha^\flat-  \beta^\flat f_0^\dagger\|^p + \| \gamma^\dagger \beta - \xi_{(n(0),n(1))}^\flat\|^p< \e_1^p+\e_2^p.
$$
Continuing in this way one obtains an increasing sequence $(n(k))_{k\geq 0}$ and contractive operators
$f_k^\dagger:E\lra X_{n(k)}$ and $f_k^\ddagger:X_{n(k)}\lra E$ such that
\begin{itemize}
\item $\|f_k^\ddagger f_k^\dagger-{\bf 1}_E\|\leq \e_{k+1}$.
\item $\|f_{k+1}^\dagger-f_k^\dagger\|< \left(\e_{k+1}^p+\e_{k+2}^p\right)^{1/p}$,
\end{itemize}
The second estimate implies that $(f_k^\dagger)_k$ is a Cauchy sequence in $L(E,X)$ since
$$
\|f_{k+m}^\dagger - f_k^\dagger\|\leq
\left(\sum_{i=0}^{m-1} \|f_{k+i+1}^\dagger - f_{k+i}^\dagger\|^p 	\right)^{1/p}
\leq \left(\sum_{i=0}^{m-1} \e_{k+i+2}^p +  \e_{k+i+1}^p	\right)^{1/p}.
$$
And then the first one implies that the ``double sequence'' $(f_k^\ddagger f_n^\dagger)_{k,n}$ converges to the identity of $E$ in the sense that for every $\delta>0$ there is $m$ such that $\|f_k^\ddagger f_n^\dagger-{\bf 1}_E\|< \delta$
whenever $k,n\geq m$.

Define $f^\flat: E\lra X$ as the limit of the sequence  $(f_k^\dagger)_k$, that is
$$
f^\flat(y)=\lim_k f_k^\dagger(y).
$$
To obtain a suitable projection along $f^\flat$ we can use the local compactness of $E$: take a nontrivial ultrafilter $\mathscr U$ on the integers, put
$$
f^\sharp(x)=\lim_{{\mathscr U}(n)} f_n^\ddagger(x)
$$
for $x\in \bigcup_k X_k$ and extend by continuity to all of $X$. It clear that $f^\flat$ and $f^\sharp$ are contractive. Finally, given $y\in E$, one has
$$
f^\sharp f^\flat(y)= \lim_{{\mathscr U}(n)} f_n^\ddagger\left( f^\flat(y)\right)
=
\lim_{{\mathscr U}(n)} f_n^\ddagger\left(\lim_{k\to\infty} f_k^\dagger(y)\right)
=
\lim_{{\mathscr U}(n)}  \left(\lim_{k\to\infty} f_n^\ddagger f_k^\dagger(y)\right)
=
\lim_{k,n} f_n^\ddagger f_k^\dagger(y)= y.
$$
This shows that $f^\flat$ is an isometry whose range is 1-complemented in $X$.
\end{proof}

\begin{theorem}\label{th:iso-uniq}
Suppose $X$ and $Y$ are $p$-Banach spaces with 1FDDs and satisfying $[\Game]$.
Let $f_0:A\lra B$ be a surjective isometry, where $A$ is a 1-complemented subspace of $X$
and $B$ is a 1-complemented subspace of $Y$. Then for every $\e>0$ there is a surjective isometry $f: X\lra Y$ such that $\|f|_A-f_0\|<\e$.
\end{theorem}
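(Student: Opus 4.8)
The plan is a back-and-forth argument realizing $f$ as a pointwise limit of genuine isometries between finite-dimensional $1$-complemented subspaces, in the spirit of the ``Universality'' and stability proofs above. Fix $1$FDDs $(X_n)$ of $X$ and $(Y_n)$ of $Y$ and a sequence of positive reals $(\e_n)$ with $\e_0<\e$ and $\sum_n\e_n^p<\e^p$. Starting from $\phi_0=f_0\colon A\to B$, I would inductively construct surjective isometries $\phi_n\colon A_n\to B_n$, where $A_n\subseteq X$ and $B_n\subseteq Y$ are finite dimensional and $1$-complemented in the ambient spaces, with $A_0=A$, $B_0=B$, arranged so that $\bigcup_n A_n$ is dense in $X$, $\bigcup_n B_n$ is dense in $Y$, and consecutive isometries are two-sidedly close:
$$
\|\phi_{n+1}|_{A_n}-\phi_n\|\le \e_n, \qquad \|\phi_{n+1}^{-1}|_{B_n}-\phi_n^{-1}\|\le \e_n .
$$
Granting this, the pointwise-limit device of the previous section, applied to the contractive sequences $(\phi_n)$ and $(\phi_n^{-1})$ whose errors are $p$-summable, produces contractions $f\colon X\to Y$ and $g\colon Y\to X$; the two-sided estimates force $gf={\bf 1}_X$ and $fg={\bf 1}_Y$, so $f$ is a surjective isometry, while $\|f|_A-f_0\|\le(\sum_n\e_n^p)^{1/p}<\e$.

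The inductive step alternates between enlarging the domain (to exhaust $X$) and the range (to exhaust $Y$); I describe the forward half-step, the backward one being symmetric with $X$ and $Y$ interchanged. Given $\phi_n$, to absorb the next layer $X_j$ I would first, as in the stability Proposition, use a small automorphism (Lemma~\ref{lem:small}) to relocate the relevant finite-dimensional subspace comfortably, and set $F=A_n+X_j$; since $A_n$ is $1$-complemented in $X$, restricting a contractive projection $X\to A_n$ exhibits $A_n$ as a $1$-complemented subspace of $F$. Viewing $\phi_n$ as an isometry $A_n\to Y$ with $1$-complemented range, property $[\Game]$ of $Y$ furnishes, for a small auxiliary $\delta>0$, a $\delta$-isometry $h\colon F\to Y$ with $(1+\delta)$-complemented range extending $\phi_n$. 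After correcting the bound (Lemma~\ref{lem:bound}) this yields a contractive approximate pair, i.e.\ contractive maps $h^\dagger\colon F\to Y$ and $h^\ddagger\colon Y\to F$ with $\|h^\ddagger h^\dagger-{\bf 1}_F\|$ as small as we please and $h^\dagger|_{A_n}$ within $\delta$ of $\phi_n$. Now the preceding (stability) Proposition, applied in the target $Y$ (which has a $1$FDD and satisfies $[\Game]$), corrects $h^\dagger$ to a \emph{genuine} isometry $\phi_{n+1}\colon F\to Y$ with $1$-complemented range and $\|h^\dagger-\phi_{n+1}\|$ small, whence $\|\phi_{n+1}|_{A_n}-\phi_n\|\le\e_n$. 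The backward half-step runs $[\Game]$ and the stability Proposition in $X$ on $\phi_n^{-1}$ to absorb $Y_k$; interleaving the two kinds of half-steps makes both unions dense.

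The delicate point, and the reason the stability Proposition is proved beforehand, is that $[\Game]$ alone delivers only \emph{almost}-isometric extensions with \emph{almost}-$1$-complemented ranges, whereas the limit argument needs genuine contractions whose limits are mutually inverse. The correction step supplies exactly this, but at a price: the corrected pieces move slightly, so one cannot simultaneously keep the domain $1$-complemented in $X$ and the range $1$-complemented in $Y$ by a naive choice such as $F=A_n+X_j$, and the subspaces $A_n,B_n$ end up only approximately nested. Reconciling these requirements is the main obstacle; I would handle it by performing the extension at the level of pairs, using the amalgamation Lemma~\ref{lem:PBwithpairs} (together with Lemma~\ref{lem:W}) to extend the embedding and the projection consistently on both sides, so that each newly produced $A_{n+1}$ and $B_{n+1}$ arises as the $1$-complemented range of a stability correction, and by running the pasting device in its approximate form, exactly as in the two preceding proofs. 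The remaining work is bookkeeping: at each stage one chooses $\delta$ small enough (depending, via Lemma~\ref{lem:small}, on a fixed basis of the current $A_n$ or $B_n$) that both the forward and backward perturbations fall below $\e_n$ and the accumulated drift on $A$ never exceeds $\e$; the $p$-summability of $(\e_n)$ then closes the argument.
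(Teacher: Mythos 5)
Your proposal follows the same overall route as the paper's proof: a back-and-forth construction of isometries between finite-dimensional $1$-complemented subspaces of $X$ and $Y$, each step consisting of an application of $[\Game]$ followed by the stability Proposition (to upgrade the resulting almost-isometric extension to a genuine isometry with exactly $1$-complemented range), and a final pointwise-limit argument producing mutually inverse contractions $f$ and $g$ with $\|f|_A-f_0\|<\e$. The one genuine divergence is the point you single out as delicate, and it deserves comment. The paper's own proof takes $A_1=A+X_1$, then $B_1=f_1[A_1]+B+Y_1$, then $A_2=g_1[B_1]+A_1+X_2$, and so on, and at each step applies $[\Game]$ to the inverse of the isometry constructed at the previous step; the range of that inverse is one of these sums, so the argument tacitly requires sums such as $A+X_1$ to be $1$-complemented in the ambient space. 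This is not automatic: a sum of two $1$-complemented subspaces need not be $1$-complemented, and for $p<1$ complementation is scarcer still (even lines can fail to be complemented). You flag exactly this obstruction --- that the naive choice $F=A_n+X_j$ cannot keep both sides $1$-complemented --- and propose to route around it by working at the level of pairs, producing each new domain and range as the $1$-complemented range of a stability correction, with Lemma~\ref{lem:W} and the amalgamation Lemma~\ref{lem:PBwithpairs} supplying abstract amalgams in which all the required projections are exactly contractive; this is precisely the mechanism by which the stability Proposition itself dodges the issue, since there $[\Game]$ is only ever invoked to extend the inclusion of an FDD piece, whose $1$-complementation is given. So on this point your sketch is more careful than the paper's writeup (note the authors' own admission that they are not sure Lemma~6.2 of \cite{GW} is true as stated). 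What your proposal lacks is the execution of that repair --- the bookkeeping that keeps the amalgam-produced subspaces approximately nested, eventually absorbing every $X_j$ and $Y_k$, while carrying exact contractive projections --- but this is the very step the paper's proof passes over in silence, so your account is at least as complete as the one it would be measured against.
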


\begin{proof}
The proof is a typical back-and-forth argument oiled by the preceding Proposition.
Fix a sequence of positive real numbers $(\e_n)_{n\geq 0}$, such that $\sum_{n\geq 0}\e_n^p<\e^p$. Let $(X_n)$ and $(Y_n)$ be increasing sequences of finite dimensional 1-complemented subspaces of $X$ and $Y$, respectively, with dense union.

Take $A_1=A+X_1$. Then $f_0^{-1}$ embeds isometrically $B$ into $A_1$, as a 1-complemented subspace. As $Y$ has property $[\Game]$, for each $\delta>0$ we can find
a $\delta$-isometry $f_{1/2}: A_1\lra Y$ whose range is $(1+\delta)$-complemented extending the inclusion of $B$.

Applying the preceding Lemma to $f_{1/2}$ with $\delta$ small enough we obtain an isometry $f_1:A_1\lra Y$ with 1-complemented range such that $\|f_1(f_0^{-1}(y))-y\|<\e_1\|y\|$ for all nonzero $y\in B$.

Set $B_1=f_1[A_1]+B+Y_1$ and apply the same argument to obtain an isometry $g_1:B_1\lra X$ with 1-complemented range with $\|g_1(f_1(x))-x\|<\e_1\|x\|$ for all nonzero $x\in A_1$.

Now set $A_2=g_1[B_1]+A_1+X_2$ and let $f_2:A_2\lra Y$ be an isometry with 1-complemented range such that $\|f_2(g_1(y))-y\|<\e_2\|y\|$ for all nonzero $y\in B_1$ and so on.

Continuing in this way we obtain two increasing sequences $(A_n)_{n\geq 0}$ and $(B_n)_{n\geq 0}$ of finite dimensional 1-complemented subspaces of $X$ and $Y$, respectively, with dense union, where $A_0=A$ and $B_0=B$ together with isometries $f_n:A_n\lra B_n$ and $g_n:B_n\lra A_{n+1}$ satisfying
\begin{itemize}
\item[(1)] $\|g_n(f_n(x))-x\|<\e_n\|x\|$ for all nonzero $x\in A_n$;
\item[(2)] $\|f_{n+1}(g_n(y))-y\|<\e_n\|y\|$ for all nonzero $y\in B_n$,
\end{itemize}
where $g_0=f_0^{-1}$. The situation is illustrated in the following (``almost commutative'') diagram
$$
\xymatrix{
A \ar[r]\ar[d]^{f_0}  & A_1 \ar[dr]^{f_1} \ar[rr] && A_2 \ar[dr]^{f_2} \ar[rr] & &\cdots\\
B \ar[rr]\ar[ur]_{g_0} &  & B_1  \ar[rr] \ar[ur]_{g_1} && B_2 \ar[ur]_{g_2} \ar[rr] & &\cdots\\
}
$$
Let us define an operator $f:X\lra Y$ as follows. For $x\in A_k$ put
$$
f(x)=\lim_{n\geq k} f_n(x).
$$
The definition makes sense since $(f_n(x))_{n\geq k}$ is a Cauchy sequence. Indeed, for $x\in A_n$ we have
$$
\begin{aligned}
\|f_{n+1}(x)-f_n(x)\|&\leq \left(\|f_{n+1}(x)- f_{n+1}(g_n(f_n(x)))\|^p+ \| f_{n+1}(g_n(f_n(x))) -f_n(x)\|^p\right)^{1/p}\\
&\leq  \left(\|f_{n+1}\|^p\|x- g_n(f_n(x))\|^p+ \e_n^p \|f_n(x)\|^p\right)^{1/p}\\
&\leq 2^{1/p}\e_n\|x\|.
\end{aligned}
$$
Since $\sum_{n\geq 0}\e_n^p$ is finite we see that $f$ is correctly defined on $\bigcup_nA_n$ and so it extends to a contractive operator on $X$ that we call again $f$.

Besides, for  $x$ in $A=A_0$ one has
$$
\|f(x)-f_0(x)\|\leq \left( \sum_{n\geq 0}\|f_{n+1}(x)-f_n(x)\|^p 	\right)^{1/p}\leq  \left(\sum_{n\geq 0}2\e_n^p\right)^{1/p}\leq 2^{1/p}\e.
$$
Proceeding analogously with the sequence $(g_n)$ one obtains a contractive operator $g:Y\lra X$ such that
$$
g(y)=\lim_{n\geq k}g_n(y)\quad\quad(y\in B_k).
$$
Moreover, it is clear from (1) and (2) that $gf={\bf 1}_X$ and $f g={\bf 1}_Y$, which completes the proof.
\end{proof}

Let us draw some consequences of Theorem~\ref{th:iso-uniq}.
First, any $p$-Banach space with a 1FDDs and property $[\Game]$ is isometric to $\mathscr K_p$ and, therefore, of almost universal complemented disposition.

The 1FDD requirement is quite natural in our setting, since it corresponds to ``separability'' in the category of contractive pairs. We refer the reader to \cite{CM-ACUD} for a more complete classification of Banach spaces of (almost) universal complemented disposition. There, by using an ``enveloping'' technique, it is shown that every Banach space whose dual is separable is isometric to a 1-complemented subspace of a separable Banach space of almost universal complemented disposition. Thus, starting with a separable and reflexive Banach space lacking the AP one obtains Banach spaces of (almost) universal complemented disposition different from  $\mathscr K_1$.

Second, the Banach space $\mathscr K_1$ (that is, Garbuli\'nska renorming of Kadec/Pe\l czy\'nski/Woj\-taszczyk space) is ``almost isotropic'': given $x,y\in\mathscr K_1$ with $\|x\|=\|y\|=1$ and $\e>0$ there is an isometric automorphism $f$ of $\mathscr K_1$ such that $\|y-f(x)\|\leq\e$. This clearly follows from Theorem~\ref{th:iso-uniq} and the fact that all lines are   1-complemented in all Banach spaces, by Hahn-Banach. That cannot we achieved for $\e=0$ since the unit sphere of $\mathscr K_1$ contains (many) points where the norm is smooth and points where it is not (think of an isometric copy of, say, $\ell_\infty^2$) and note that a surjective isometry must preserve both classes.

In contrast, there is no equivalent $p$-norm rendering $\mathscr K_p$ almost isotropic when $p<1$. For if $X$ is almost isotropic and linearly isomorphic with $\mathscr K_p$, then the functional
$$
|x|=\|x\|+\sup_{\|x^*\|\leq 1}|x^*(x)|
$$
is another $p$-norm that is preserved by every isometry for the original $p$-norm of $X$. It quickly follows (cf. \cite[Theorem~3.3]{maximal} for the complete argument) that $|x|=2\|x\|$ for all $x\in X$ and so $\|x\|=\sup_{\|x^*\|\leq 1}|x^*(x)|$, that is, $X$ is locally convex, which is not the case.


\subsection{Wheeling around $\e$}
It is clear that moving the number $\e$ from here to there in the definitions opening  Section~3 one obtains other variants that are more or less equivalent to these appearing in the text.

Actually the version of property $[\Game]$ and the definition of a space of almost universal complemented disposition that we have used here do not agree with those of \cite{GW} and \cite{CM-ACUD}. The following simple remark shows that $[\Game]$ is equivalent to Garbuli\'nska's property (E) of \cite{GW} and that Definition~\ref{def:aucd} is equivalent to Definition
3.1 in \cite{CM-ACUD}.

\begin{lemma}[Correcting a defective pair]\label{defec}
Let $f^\dagger:E\lra F$ and $f^\ddagger:F\lra E$ be operators such that $\|f^\ddagger\, f^\dagger-{\bf 1}_E\|\leq \varepsilon$, where $\varepsilon<1$. Then there is an automorphism $a$ of $E$ such that
\begin{itemize}
\item $\|a-{\bf 1}_E\|\leq \e(1-\varepsilon^p)^{-1/p}$,
\item $\|a\|\leq (1-\varepsilon^p)^{-1/p}$,
\item $\|a^{-1}\|\leq (1+\varepsilon^p)^{1/p}$,
\item $a f^\ddagger$ is a projection along $f^\dagger$, that is, $a f^\ddagger f^\dagger ={\bf 1}_E$.
\end{itemize}
\end{lemma}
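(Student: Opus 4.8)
The plan is to take $a$ to be the inverse of the operator $T=f^\ddagger\, f^\dagger\in L(E)$, which exists because $T$ is a small perturbation of the identity. Writing $N={\bf 1}_E-T$, the hypothesis says exactly that $\|N\|\leq\varepsilon<1$, so I would define $a=T^{-1}=\sum_{k\geq 0}N^k$ via the Neumann series. With this choice the last bullet is immediate, since $a\, f^\ddagger f^\dagger=T^{-1}T={\bf 1}_E$, so $a f^\ddagger$ is indeed a projection along $f^\dagger$.

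The one thing worth isolating is why the series converges and why the constants come out exactly as stated. The key observation is that the operator quasinorm on $L(E)$ is itself a $p$-norm: for $S,R\in L(E)$ the $p$-subadditivity of the $p$-norm of $E$ gives $\|(S+R)x\|^p\leq\|Sx\|^p+\|Rx\|^p\leq(\|S\|^p+\|R\|^p)\|x\|^p$, hence $\|S+R\|\leq(\|S\|^p+\|R\|^p)^{1/p}$; together with submultiplicativity $\|SR\|\leq\|S\|\,\|R\|$ and completeness of $E$, this makes $L(E)$ a $p$-Banach algebra in which $\sum_k N^k$ converges, since $\sum_k\|N^k\|^p\leq\sum_k\varepsilon^{kp}<\infty$. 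Once this is in place, the three norm bounds are geometric-series computations performed with $p$-th powers. Summing $\|N^k\|^p\leq\varepsilon^{kp}$ over $k\geq0$ gives $\|a\|^p\leq\sum_{k\geq0}\varepsilon^{kp}=(1-\varepsilon^p)^{-1}$, that is $\|a\|\leq(1-\varepsilon^p)^{-1/p}$; summing instead over $k\geq1$ gives $\|a-{\bf 1}_E\|^p=\big\|\sum_{k\geq1}N^k\big\|^p\leq\varepsilon^p(1-\varepsilon^p)^{-1}$, i.e. $\|a-{\bf 1}_E\|\leq\varepsilon(1-\varepsilon^p)^{-1/p}$. Finally $a^{-1}=T={\bf 1}_E-N$, so $p$-subadditivity yields $\|a^{-1}\|^p\leq\|{\bf 1}_E\|^p+\|N\|^p\leq1+\varepsilon^p$, that is $\|a^{-1}\|\leq(1+\varepsilon^p)^{1/p}$, where I use $\|{\bf 1}_E\|=1$ (the case $E=0$ being trivial).

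There is no genuine obstacle here: the entire content is the remark that the perturbation estimate lives in the $p$-Banach algebra $L(E)$, so that the classical Neumann-series inversion goes through verbatim, and that the $p$-additivity of the operator norm is precisely what turns the naive geometric bounds into the sharp constants $(1-\varepsilon^p)^{\pm 1/p}$ and $(1+\varepsilon^p)^{1/p}$. The only point to watch is to use the $p$-sum at every step, rather than the quasi-triangle inequality with the modulus $\Delta$, since that is what produces the exponents appearing in the statement.
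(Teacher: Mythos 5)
Your proposal is correct and is exactly the paper's argument: the paper's entire proof reads ``Take $a=\sum_{n\geq 0}({\bf 1}_E- f^\ddagger f^\dagger)^n$ and check,'' which is precisely your Neumann series for $T^{-1}$ with $T=f^\ddagger f^\dagger$. Your write-up simply carries out the ``check,'' correctly identifying that the operator norm on $L(E)$ is itself a $p$-norm, which is what yields the stated constants $(1-\varepsilon^p)^{-1/p}$ and $(1+\varepsilon^p)^{1/p}$.
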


\begin{proof}
Take $a=\sum_{n\geq 0}({\bf 1}_E- f^\ddagger f^\dagger)^n$ and check.
\end{proof}

 However, ``an $\e$
of room'' is necessary to stay in the separable world, as the following example shows.

\begin{proposition}
Let $X$ be a $p$-Banach space having the following property: if $F$ is a $p$-normed of dimension 3 (or less), $E$ is a 1-complemented subspace of $F$ and  $u:E\lra X$ is an isometry with 1-complemented range, then there is an isometry $v:F\lra X$ with 1-complemented range such that $v|_{E}=u$. Then the density character of $X$ is at least the continuum.
\end{proposition}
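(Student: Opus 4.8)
The plan is to show directly that $X$ contains a $\delta$-separated subset of cardinality $\mathfrak c$ for some fixed $\delta>0$; this suffices, because a metric space whose density character is $\lambda$ cannot contain a $\delta$-separated set of cardinality bigger than $\lambda$ (a dense set must contain a point in each ball of radius $\delta/(2\Delta)$ about the separated points, and these points are pairwise distinct). Equivalently, I would exhibit $\mathfrak c$ pairwise disjoint nonempty open balls. So the whole problem reduces to manufacturing continuum many unit vectors that are pairwise at distance $\ge\delta$.

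First I would record the elementary consequences of the hypothesis. Taking $E=0$ (which is $1$-complemented in every $F$), the property says that $X$ contains a $1$-complemented isometric copy of every $p$-normed space of dimension $\le 3$; in particular every line is $1$-complemented. Moreover $1$-complementation \emph{propagates}: if $v:F\lra X$ is one of the extensions, with $1$-complemented range, and $E_0$ is $1$-complemented in $F$, then composing the norm-one projection of $X$ onto $v[F]$ with the norm-one projection of $v[F]$ onto $v[E_0]$ gives a norm-one projection of $X$ onto $v[E_0]$, so every $1$-complemented subspace of a realized copy is again $1$-complemented in $X$. The basic separating move, and the point where dimension $3$ is used, is this: starting from a $1$-complemented line $\langle a\rangle$, I would extend it (dimension $1\to 3$) to a copy of a fixed $3$-dimensional gadget $G=\langle a,c_0,c_1\rangle$ chosen so that $\|c_0-c_1\|\ge\delta_0$ and each $\langle c_i\rangle$ is $1$-complemented in $G$ --- one incoming direction and two outgoing, pairwise separated, directions.

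The main obstacle --- and the heart of the proof --- is to turn this local separating move into \emph{global} pairwise separation of a continuum of vectors. The difficulty is structural: each application of the property constrains the new vector only against the $\le 2$-dimensional ``window'' $E$ that is being extended, and so gives no control whatsoever on the distance between vectors arising from two independent extensions. Worse, any invariant of a vector that can be read off through a \emph{fixed} finite-dimensional projection or quotient lives in a separable space, and therefore cannot $\delta$-separate continuum many vectors; this rules out the naive schemes of recording distances to a fixed subspace or passing to $X/\hat E$. A tree of extensions does not help directly either: in any $p$-norm $\|c_{s0}-c_{s1}\|^p\le\|c_{s0}-c_s\|^p+\|c_s-c_{s1}\|^p$, so uniform sibling separation is incompatible with children staying close to their parent, hence with convergence along branches; and one cannot build a large $\ell_\infty^n$ by adjoining one coordinate at a time, because a $2$-dimensional window cannot certify that a new unit vector is far from \emph{all} the earlier coordinates.

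Consequently the construction must be a transfinite recursion of length $\mathfrak c$ in which separation is certified \emph{internally}, and I expect this to be the step requiring real work. The approach I would pursue is a contradiction argument: assume the density character of $X$ is some $\lambda<\mathfrak c$ and fix a dense set $D$ with $|D|=\lambda$. Using the exactness of the extension property I would realize, for each $t$ in a set of size $\mathfrak c$, a unit vector $w_t$ whose \emph{exact} local configuration against the fixed gadget $G$ (the precise list of norms $\|\mu a+\nu w_t\|$) equals a prescribed, injectively $t$-dependent datum, arranging the family $\{F_t\}$ of $3$-dimensional extensions and an independent (almost-disjoint) combinatorial indexing so that any two of these exact local data are mutually incompatible at scale $\delta$. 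A pigeonhole over the $\lambda<\mathfrak c$ points of $D$ then forces two distinct parameters $s\ne t$ whose witnesses $w_s,w_t$ are approximated by the same point of $D$, hence lie within $\delta$ of one another, contradicting the built-in incompatibility of their exact configurations. Making the ``mutual incompatibility of exact configurations'' both achievable by a single $3$-dimensional extension and strong enough to survive the pigeonhole is precisely the crux, and is where a complete proof must expend its effort; the reduction of the first paragraph, the propagation of $1$-complementation, and the separating gadget are the routine scaffolding around it.
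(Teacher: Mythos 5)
Your proposal is not a proof: the step you yourself label ``precisely the crux'' --- producing, within the reach of the hypothesis (extensions of dimension at most $3$), continuum many prescribed local configurations that are pairwise incompatible at a fixed scale $\delta$ --- is the entire content of the proposition, and you leave it unconstructed. Everything you do establish (the reduction to a $\delta$-separated set of size $\mathfrak{c}$, the propagation of $1$-complementation, the three-dimensional gadget) is, as you admit, scaffolding. Worse, the specific datum you propose to prescribe, the list of norms $\|\mu a+\nu w_t\|$, is a profile against a \emph{one-dimensional} window, and this is provably too weak, at least for real scalars and $p=1$: the functions $\mu\mapsto\|w+\mu a\|-|\mu|$ (with $\|w\|=\|a\|=1$) are uniformly bounded, equi-Lipschitz, non-increasing on $[0,\infty)$ and non-decreasing on $(-\infty,0]$, hence have limits at $\pm\infty$; pigeonholing uncountably many profiles so that their limits agree to within $\delta/4$ and their convergence thresholds are uniformly bounded forces any $\delta$-separation to be witnessed on a fixed compact interval, where Arzel\`a--Ascoli allows only finitely many pairwise $\delta$-separated profiles. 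So no continuum of unit vectors can have pairwise $\delta$-incompatible configurations over a single line. (Your intermediate claim that the window ``gives no control whatsoever'' over distances between independent extensions is also wrong --- such control is exactly what makes the proof work --- though your final paragraph effectively retracts it.)

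The paper closes precisely this gap, with an idea of Haydon taken from \cite{CGK}, by using the largest window the hypothesis permits: a \emph{two-dimensional} one. Let $E$ be the Euclidean plane, realized as a $1$-complemented subspace of $X$, and for each unit vector $u$ in the sphere $S$ of $E$ put on $E\times\R$ the $p$-norm $|(x,t)|_u=\max\left(\|x\|_2,\|(\langle x|u\rangle,t)\|_p\right)$. Each $F_u$ contains $E$ isometrically and $1$-complementedly, so the hypothesis, applied to the \emph{same} embedding $E\subset X$ for every $u$, yields isometries $f_u:F_u\lra X$ fixing $E$ pointwise, necessarily of the form $f_u(x,t)=x+te_u$ with $\|e_u\|=1$. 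For distinct $u,v$ in the positive part of $S$ (so $0<\langle u|v\rangle<1$), $p$-subadditivity gives $\|e_u-e_v\|^p\geq\|e_u+\lambda u\|^p-\|e_v+\lambda u\|^p=|(\lambda u,1)|_u^p-|(\lambda u,1)|_v^p=(1+\lambda^p)-\lambda^p=1$ for $\lambda$ large, since $|(\lambda u,1)|_v^p=\max(\lambda^p,1+\lambda^p\langle u|v\rangle^p)=\lambda^p$ eventually. Thus $\{e_u: u\in S_+\}$ is a $1$-separated set of cardinality $\mathfrak{c}$. Note how this evades the compactness obstruction above: the incompatibility between the $u$- and $v$-configurations is witnessed along the \emph{pair-dependent} probe direction $u$ at pair-dependent amplitude $\lambda$, and a two-dimensional window supplies a continuum of directions at infinity, where a line supplies only two. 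Without this construction (or an equivalent one), your pigeonhole framing has nothing to pigeonhole.
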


\begin{proof}
The proof uses an idea of Haydon, taken from \cite{CGK}.
Let us do it in the real case.
The hypothesis implies that $X$ contains a complemented copy of each $p$-normed space of dimension up to 3. Let $E$ be the Euclidean plane and $S$ the unit sphere of $E$.

For each $u\in S$, we consider the following $p$-norm on $E\times\R$:
$$
|(x,t)|_u= \max \left( \|x\|_2, \|(\langle x|u\rangle,t)\|_p	\right)
$$
and let $F_u$ denote the resulting 3-dimensional space.
(The unit ball of $F_u$ is the intersection of a ``vertical'' right cylinder and a ``horizontal'' right prism whose basis is the two dimensional $p$-ball with ``peaks'' at $(0,1)$ and $(u,0)$.)

 Note that
$\|x,0\|_u=\|x\|_2$ (so $E$ is isometric to a subspace of $F_u$) and that
$|(x,t)|_u\geq \|x\|_2$ for each $(x,t)\in F_u$ (so the obvious projection is contractive).

Now we consider $E$ as a 1-complemented subspace of $X$ and let us assume that for every $u\in S$ one can find an isometry $f_u:F_u\lra X$ such that $f_u(x,0)=x$.

Clearly, $f_u$ must have the form $f_u(x,t)=x+te_u$, for some fixed $e_u$ in the unit sphere of $X$.

Now let $S_+$ be the ``positive part'' of $S$, so that $0<\langle u,v\rangle<1$ if $u,v\in S_+$ are different. We claim that $\|e_u-e_v\|=1$ for $u,v\in S_+$ unless $u=v$.

To proceed, pick any $\lambda>0$. We have
$$
\|e_u-e_v\|^p\geq \|e_u+\lambda u\|^p - \|e_v +\lambda u\|^p= |(\lambda u,1)|_u^p-
 |(\lambda u,1)|_v^p.
$$
But $|(\lambda u,1)|_u^p=1+\lambda^p$, while for large $\lambda$,
$$
|(\lambda u,1)|_v^p= \max\left(\lambda^p, 1+\lambda^p\langle u|v\rangle^p\right)= \lambda^p.
$$
Hence the density character of $X$ is, at least, the cardinality of $S_+$.
\end{proof}

\section{Some questions to end}

We take our leave of the reader with a list of questions that we encountered along the research that we have reproted in this paper. Some of them might be very silly exercises, but we don't know the answer.

\begin{itemize}

\item The proof of Corollary~\ref{cor:theend}
shows that $ Y\oplus\left(\co^{(p)}(X)\oplus_\mho X\right)$ is isomorphic to
$\co^{(p)}(X)\oplus \ell_p$. However we don't know if, with the notations of that Corollary, $Y$ is isomorphic to $\co^{(p)}X$ or $\co^{(p)}(X)\oplus_\mho X$ to $\ell_p$.

\item Let  $Q: \ell_p\lra X$ be a quotient map. Does $\ker Q$ have a basis if $X$ has a basis? And $\co^{(p)}X$?

\item Does every quotient $\ell_p\lra X$ have a continuous selection? What if $X=\mathscr K_p$?

\item If $X$ is a $p$-Banach space with separating dual it is always possible to find $x^*\in X^*$ attaining the norm at some point of the unit ball of $X$?

\item If $K$ is a compact metric space, does the space $C(K,\mathscr K_p)$ have the BAP? The answer is affirmative if $K$ is zero dimensional, but it seems to be unknown even if $K$ is the unit interval.

\item Does Kadec' space in \cite{kade} have property $[\Game]$ in its own norm?

\item Do the isometry groups of the spaces $\mathscr K_p$ have any amenability property in, say, the strong operator topology? Does the space  $\mathscr K_p$ admit a surjective isometry of the form $T={\bf 1}_{\mathscr K_p}+F$, where $F$ is a finite rank operator? Is the isometry group of the real spaces $\mathscr K_p$ discrete in the norm topology?

\end{itemize}

\section*{Appendix: pull-back}
In the papers \cite{CGK, CM-ACUD, GW} ``amalgamations'' are invariably constructed by means of push-outs. Here we take the ``dual view point'' which is perhaps more direct since it does not depend on the ambient category nor uses quotients. So let us have a little chat about the pull-back construction for quasi Banach spaces.

We first explain what we need and then how one can manage to get it.

Let $\alpha:A\lra E$ and $\beta:B\lra E$ be operators acting between $p$-Banach spaces.
What we need is another $p$-Banach space $\PB$, together with contractive operators $\overline{\alpha}$ and  $\overline{\beta}$ making commutative the square
\begin{equation}\label{pb-dia}
\begin{CD}
\PB@>\overline{\beta}>> A\\
@V \overline{\alpha} VV @VV \alpha V\\
B @> \beta >> E
\end{CD}
\end{equation}
Moreover, and this is the crucial point, the square has to be ``minimally commutative'' in the sense that for every couple of operators
$\beta': C\lra A$ and $\alpha': C\lra B$ satisfying $\alpha \beta'=\beta\alpha'$,
there is a unique operator $\gamma:C\lra\PB$ such that:
\begin{itemize}
\item $\alpha'=\overline{\alpha} \gamma$ and
$\beta'=\overline{\beta} \gamma$ and ${''}\!\beta={'}\!\beta \gamma$,
\item $\|\gamma\|\leq \max\big{(} \|\alpha'\|, \|\beta'\|	\big{)}$.
\end{itemize}
This universal property can be visualized in the commutative
diagram
\begin{equation}\label{diag:PB}
\xymatrix{C \ar[dr]^{\gamma} \ar@/^1pc/[rrd]^{\beta'} \ar@/_1pc/[rdd]_{\alpha'}\\
 & \PB \ar[r]^{\overline{\beta}} \ar[d]_{\overline{\alpha}} & A \ar[d]^{\alpha} \\
 & B\ar[r]^{\beta} & E \\
}
\end{equation}
hence the term ``pull-back''.

In particular the space $\PB$ is unique, up to isometries.
Having said this, let us describe a ``concrete'' representation.

The pull-back space is $\PB=\PB(\alpha,\beta)=\{(a,b)\in A\oplus_\infty B: \alpha(a)=\beta(b) \}$.
The arrows under bars are the restriction of the projections onto the corresponding factor.
These work as required since if
$\beta': C\lra A$ and $\alpha': C\lra B$ satisfy $\alpha \beta'=\beta\alpha'$,
then we can set $\gamma(c)=(\beta'(c),\alpha'(c))$.

It is important to realize that if $\alpha$ and $\beta$ are rational maps,
then $\PB$ has a basis of rational vectors of $A\times B=\K^{n+m}$ and therefore it can be regarded as $\K^k$ equipped with a $p$-norm that has to be allowed if those of $A$ and $B$ are, by conditions (2) and (3) in the definition of Section~\ref{sec:allowed}.

The ``projections''  $\overline{\alpha}$ and $\overline{\beta}$ are then rational maps
and the universal property of Diagram~\ref{diag:PB} ``preserves'' rational maps in the sense that if $\beta'$ and $\alpha'$ are rational maps, then so is $\gamma$.

 Please keep this in mind when reading the Proof of Lemma~\ref{lem:PBwithpairs}.

\end{document}